\documentclass[11pt]{amsart}
\usepackage[english]{babel}
\usepackage{egothic}
\usepackage[T1]{fontenc}

\usepackage{amsmath}
\usepackage{amsthm}
\usepackage{amssymb}
\usepackage[all]{xy}
\usepackage{mathrsfs}
\usepackage{todonotes}
\usepackage{bbm}
\usepackage{enumerate}
\usepackage{float}
\usepackage{graphicx}
\usepackage{caption}
\usepackage{subcaption}
\usepackage{xcolor} 
\usepackage{hyperref}

\usepackage{accents}

\newtheorem{theorem}{Theorem}[section]

\newtheorem{lemma}[theorem]{Lemma}
\newtheorem{proposition}[theorem]{Proposition}
\newtheorem{corollary}[theorem]{Corollary}

\theoremstyle{definition}
\newtheorem{remark}[theorem]{Remark}

\newcommand{\E}{\mathbb{E}}

\renewcommand{\1}{{\mathbbm{1}}}

\DeclareRobustCommand{\ctau}{{
		\mathpalette\cap@greek\tau
}}
\DeclareRobustCommand{\csigma}{{
		\mathpalette\cap@greek\sigma
}}
\makeatother

\title[Bounds for Dyadic Square Functions of indicator functions]{{Sharp} Lower Bounds for Dyadic Square Functions of indicator functions of sets}
\date{}

\author[N. Alpay]{Natanael Alpay}
\address{(NA)
Department of Mathematics\\ 
University of California, Irvine,
Irvine, CA 92697 \\
USA}
\email{nalpay@uci.edu}

\author[P. Ivanisvili]{Paata Ivanisvili}
\address{(PI)
Department of Mathematics\\ 
University of California, Irvine,
Irvine, CA 92697 \\
USA}
\email{pivanisv@uci.edu}

\subjclass[2020]{46B09, 60E15, 05C35}
\keywords{Square functions, dyadic martingales, Bellman functions, indicator functions, endpoint inequality, isoperimetric inequality}

\begin{document}

\begin{abstract}
We study lower bounds for dyadic square functions of indicator functions. In the case of the dyadic square function $S_{2}$ we obtain a sharp lower bound: for every measurable $A \subset {[0,1)}$, we have
\[
\|S_{2}(\mathbbm{1}_{A})\|_{1}\ge \mathbb{E}_{|A|}\big[\sqrt{\tau}\big]\asymp |A|^{*}\log_2\frac{1}{|A|^{*}},
\]
where $\tau$ is the first exit time from $(0,1)$ of a standard Brownian motion started at $|A|$, and $|A|^{*}:=\min\{|A|,1-|A|\}$.  
This estimate gives logarithmic improvement over the classical Burkholder--Davis--Gundy lower bound $|A|^{*}$. 
In addition, we show a sharp inequality
\[
\|S_{1}(\mathbbm{1}_{A})\|_{1} \ge T(|A|)\asymp  |A|^{*}\log_{2}\frac{1}{|A|^{*}},
\]
where $T(x)=\sum_{k=0}^{\infty}\frac{\operatorname{dist}(2^{k}x,\mathbb{Z})}{2^{k}}$ is the Takagi function. 
\end{abstract}

\maketitle

\section{Introduction and the main  results}

Let  $({[0,1)}, \mathcal{B}, dx)$ be the probability space, where $\mathcal{B}$ is the Borel $\sigma$-algebra, and $dx$ is the Lebesgue measure. For each $n \geq 0$ we denote by $\mathcal{D}_n$ dyadic intervals belonging to {$[0,1)$} of level $n$, i.e.,
\[
\mathcal{D}_n=
{\left\{
\left[\frac{k}{2^n}, \frac{k+1}{2^n}\right)
:\ k=0,\ldots,2^n-1
\right\}.}
\]

Let $\{{[0,1)}, \emptyset\} =\mathcal{F}_{0} \subset \mathcal{F}_{1} \subset \ldots$ be the sequence of increasing family of filtrations, i.e., $\sigma$-algebras generated by the dyadic intervals $\mathcal{D}_{n}$, and let $\mathcal{D} = \bigcup_{n\geq 0} \mathcal{F}_n$ denote the family of all sets measurable with respect to some finite dyadic level. Let us also introduce a symbol $D_{n}$ denoting dyadic numbers of level $n$, i.e., $D_{n} = \{ k/2^{n}, k=0, \ldots, 2^{n}\}$, and set $D= \cup_{n \geq 0} D_{n}$. For any $f \in {L^{1}([0,1))}$, and any interval $I \subset {[0,1)}$ we set
\begin{align*}
\langle f \rangle_{I} = \frac{1}{|I|} \int_{I} f(y)dy,
\end{align*}
where $|I|$ denotes the Lebesgue length of the interval. Given $f \in {L^{1}([0,1))}$, the sequence $\{f_{n}\}_{n \geq 0}$, where
\begin{align*}
f_{n} = \mathbb{E} (f|\mathcal{F}_{n}) = \sum_{I\in \mathcal{D}_n} \mathbbm{1}_{I}(x)\langle f \rangle_{I},
\end{align*}
is called the dyadic martingale generated by $f$. Clearly $\lim_{n \to \infty} f_{n} =f$ a.e. We say that $\{f_{n}\}$ is a simple martingale (also called Paley--Walsh martingale \cite{ABS}) if $f_{N}=f_{N+1}=\ldots$ after some large $N$. For general Borel sets $A\subset {[0,1)}$, the martingale generated by $\mathbbm{1}_{A}$ need not be simple; simple martingales will nevertheless play an important role as approximants and in the sharpness constructions. Define $\{d_{n}\}_{n \geq 1}$, where $d_{n} := f_{n}-f_{n-1}$, to be the martingale difference sequence. Next, for $\beta \geq 1$ and $N\ge1$, we define
\begin{align*}
S_{\beta,N}(f)=\left(\sum_{1\leq n \leq N} |d_{n}|^{\beta}\right)^{1/\beta}.
\end{align*}
We then define $\beta$-variation $S_{\beta}(f)$ by
\begin{align*}
S_{\beta}(f) = \lim_{N \to \infty} S_{\beta,N}(f)
= \left(\sum_{n \geq 1} |d_{n}|^{\beta}\right)^{1/\beta} \; \in \;  [0, \infty]. 
\end{align*}
When $\beta=2$ the function $S_{2}$ is also known as the (dyadic) square function. 


The celebrated Burkholder--Davis--Gundy inequality states that the $L_{p}$ norm of the square function $S_{2}(f)$ is comparable to the $L_{p}$ norm of $f$ for all $p$, $1<p<\infty$ (see \cite{BDS1}, \cite{BDS2}). For dyadic martingales, we have better bounds (see \cite{Wang1}, \cite{Wang2}, \cite{Davis} and references therein): there exist universal constants $0<c,C<\infty$ such that
\begin{align}\label{BDG}
c\frac{\sqrt{p}}{p+1}\| f-\mathbb{E}f\|_{p} \stackrel{0<p<\infty}{\leq} \|S_{2}(f)\|_{p} \stackrel{1<p<\infty}{\leq} C \frac{p^{3/2}}{p-1} \| f -\mathbb{E}f\|_{p}
\end{align}
holds for all  $f \in {L^{p}([0,1))}$ (here $\mathbb{E}f = \int_{0}^{1}f$). The endpoint behavior of the constants in (\ref{BDG}) are sharp. \\

In this paper we will be mostly concerned with lower bounds on $\|S_{2}(f)\|_{1}$ and $\|S_{1}(f)\|_{1}$ in the case when $f$ is a boolean function, i.e., $f=\mathbbm{1}_{A}$ for some Borel $A\subset  {[0,1)}$. Applying (\ref{BDG}) to $f(x)= \mathbbm{1}_{A}(x)$ we obtain 
\begin{align}\label{bestBDG}
\|S_{2}(\mathbbm{1}_{A})\|_{1} \gtrsim |A|^{*},
\end{align}
where $f \gtrsim g$ means $f \geq C g$ for some universal constant $C>0$, and  
$$
|A|^{*} := \min\{ |A|, 1-|A|\} \asymp |A| (1-|A|).
$$
Here the symbol $f\asymp g$ means that there exist two universal constants $c, C>0$ such that $c\leq \frac{f(x)}{g(x)} \leq C$. Our first main result is the following.
\begin{theorem}\label{mth01}
Let $(B_t)_{t\ge 0}$ be a standard Brownian motion started at $B_0=p\in[0,1]$, and let
\[
\tau=\inf\{t\ge 0:\, B_t\notin (0,1)\}
\]
be its first exit time from $(0,1)$. We have
\[
\inf_{A}\|S_{2}(\mathbbm{1}_{A})\|_{1}=\mathbb{E}_{p}\sqrt{\tau}.
\]
Here, infimum is taken over all Borel $A \subset {[0,1)}$ of measure $|A|=p$. 
\end{theorem}

\begin{proposition}\label{prop:const}
    For all $p \in [0,1]$ we have 
\[
\sqrt{\frac2\pi}\, \; p^*\log\frac1{p^*}
\;\le\;
\mathbb E_p\sqrt{\tau}
\;\le\;
\frac{4\sqrt2\,G}{\pi^{3/2}\ln 2}\,p^*\log\frac1{p^*},
\]
where $p^*=\min\{p,1-p\}$, and both constants $\sqrt{\frac2\pi}$ and $\frac{4\sqrt2\,G}{\pi^{3/2}\ln 2}$   are sharp.
\end{proposition}

For comparison, our result improves the best known bound (\ref{bestBDG}) by a factor of $\log(1/|A|^*)$. 

\vskip0.5cm 

Fix some parameters $(\alpha, \beta)$ such that $\beta \geq 1 \geq \alpha >0$. Define $\mathcal{F}_{\alpha,\beta}$ to be the family of all non-negative continuous functions $f$ defined on $[0,1]$, satisfying the following {\em two-point inequality} 
\begin{align}\label{twopp}
f^{\alpha}\left(\frac{x+y}{2}\right) \leq \frac{1}{2} \left(f^{\beta}(x)+\left|\frac{x-y}{2}\right|^{\beta}\right)^{\frac{\alpha}{\beta}}+\frac{1}{2} \left(f^{\beta}(y)+\left|\frac{x-y}{2}\right|^{\beta}\right)^{\frac{\alpha}{\beta}}
\end{align}
for all $x,y \in [0,1]$, and having the boundary condition
$$
f(0)=f(1)=0.
$$

\begin{proposition}\label{twopp-max-func}
For all $\beta \geq 1 \geq \alpha >0$, there exists a pointwise maximal {continous} function $ B_{\alpha, \beta}$ belonging to $\mathcal{F}_{\alpha,\beta}$, satisfying
the {\em two-point inequality} \eqref{twopp}
for all $x,y \in [0,1]$, and having the boundary condition  $B_{\alpha, \beta}(0)=B_{\alpha, \beta}(1)=0$.
\end{proposition}

\begin{theorem} \label{mthgenab}
For all pairs $(\alpha, \beta)$ with $\beta \geq 1\geq \alpha >0$, we have
\begin{align}\label{genab}
\| S_{\beta}(\mathbbm{1}_{A})\|_{\alpha} \geq B_{\alpha,\beta}(|A|). 
\end{align}
for all Borel $A \subset {[0,1)}$.
\end{theorem}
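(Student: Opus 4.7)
The plan is to prove Theorem~\ref{mthgenab} by a Bellman-function induction on the level $N$ of the simple dyadic martingale, with an auxiliary parameter $t\ge 0$ that encodes the accumulated $\beta$-variation. Writing $B$ for $B_{\alpha,\beta}$, I would establish by induction on $N$ the strengthened claim: for every simple dyadic martingale $\{f_n\}_{n\le N}$ with $f_{N}\in\{0,1\}$ and every $t\ge 0$,
\begin{equation*}
\int_{0}^{1}\Big(t+\sum_{n=1}^{N}|d_{n}(x)|^{\beta}\Big)^{\alpha/\beta}\,dx \;\ge\; \big(t+B^{\,\beta}(f_{0})\big)^{\alpha/\beta}.
\end{equation*}
Setting $t=0$ and $f=\mathbbm{1}_{A}$ then yields \eqref{genab}, because $B(0)=B(1)=0$ collapses the left-hand side to $\|S_{\beta}(\mathbbm{1}_{A})\|_{\alpha}^{\alpha}$. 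The base case $N=0$ is trivial, since $f\equiv f_{0}\in\{0,1\}$ forces both sides to equal $t^{\alpha/\beta}$.

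For the inductive step, write $f_{0}=z$, $f_{1}=x$ on $I_{-}=[0,\tfrac{1}{2})$ and $f_{1}=y$ on $I_{+}=[\tfrac{1}{2},1)$, so that $|d_{1}|\equiv|d|=|x-y|/2$. Applying the inductive hypothesis to the two rescaled half-martingales with parameter $t+|d|^{\beta}$ in place of $t$ and summing, the induction closes provided the \emph{lifted two-point inequality}
\begin{equation*}
\big(t+B^{\,\beta}(z)\big)^{\alpha/\beta}\;\le\;\tfrac{1}{2}\big(t+|d|^{\beta}+B^{\,\beta}(x)\big)^{\alpha/\beta}+\tfrac{1}{2}\big(t+|d|^{\beta}+B^{\,\beta}(y)\big)^{\alpha/\beta}
\end{equation*}
holds for all $x,y\in[0,1]$ and all $t\ge 0$, where $z=(x+y)/2$ and $d=(x-y)/2$. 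At $t=0$ this is exactly the defining inequality \eqref{twopp} of $B$.

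The main step, and where I expect the bulk of the technical work, is upgrading \eqref{twopp} from $t=0$ to all $t\ge 0$. Abbreviating $a_{1}:=|d|^{\beta}+B^{\beta}(x)$, $a_{2}:=|d|^{\beta}+B^{\beta}(y)$, $c:=B^{\beta}(z)$ and $r:=\alpha/\beta\in(0,1]$, the lifted inequality becomes $\phi(t)\ge 0$ for all $t\ge 0$, with $\phi(t):=\tfrac{1}{2}(t+a_{1})^{r}+\tfrac{1}{2}(t+a_{2})^{r}-(t+c)^{r}$ and hypothesis $\phi(0)\ge 0$. The case $r=1$ is immediate. For $r\in(0,1)$ I would exploit the integral representation $u^{r}=\tfrac{r}{\Gamma(1-r)}\int_{0}^{\infty}(1-e^{-us})s^{-1-r}\,ds$ to rewrite $\phi(t)=\tfrac{r}{\Gamma(1-r)}\int_{0}^{\infty}e^{-ts}K(s)\,ds$, where $K(s):=s^{-1-r}\big(e^{-sc}-\tfrac{1}{2}(e^{-sa_{1}}+e^{-sa_{2}})\big)$. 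Using $N(0)=0$ and $N'(0)=(a_{1}+a_{2})/2-c\ge 0$ (the latter coming from \eqref{twopp} and concavity of $u\mapsto u^{r}$), together with Descartes' rule of signs applied to the three-exponential combination $N(s):=s^{1+r}K(s)$, one sees that $K$ changes sign at most once on $(0,\infty)$. If $K\ge 0$ everywhere then $\phi(t)\ge 0$ for free; otherwise, splitting $\phi(t)=I_{+}(t)-I_{-}(t)$ according to the sign of $K$, a log-derivative computation gives $\tfrac{d}{dt}\log I_{\pm}(t)=-\mathbb{E}_{\pm}[s]$ for the probability densities $e^{-ts}|K(s)|/I_{\pm}(t)$ supported respectively below and above the unique sign-change point $s^{*}$. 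Hence $I_{+}(t)/I_{-}(t)$ is nondecreasing in $t$; combined with $I_{+}(0)\ge I_{-}(0)$ from $\phi(0)\ge 0$, this yields $\phi(t)\ge 0$ for all $t\ge 0$, closing the induction and proving Theorem~\ref{mthgenab}.
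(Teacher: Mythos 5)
Your overall strategy reproduces the paper's Bellman-function argument: you perform an induction on the martingale level with an auxiliary nonnegative parameter $t$ tracking accumulated $\beta$-variation, and the induction closes provided the ``lifted'' inequality holds uniformly in $t$. This lifted inequality is, after the substitution $t=q^\beta$, exactly the paper's three-point inequality for $\tilde U(p,q)=(B^\beta(p)+q^\beta)^{\alpha/\beta}$ established in Lemma~\ref{lem:u3pt}, and your induction plays the role of Lemmas~\ref{lem:51}--\ref{lem:max}. So the architecture matches.

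Where you genuinely depart from the paper is in \emph{how} the lifted inequality is derived from the defining two-point inequality \eqref{twopp}. The paper substitutes into the three-point inequality, isolates the $q$-dependence in $\varphi(q)=\big(\tfrac12(a_1+q^\beta)^{r}+\tfrac12(a_2+q^\beta)^{r}\big)^{1/r}-q^\beta$ with $r=\alpha/\beta$, and shows $\varphi'(q)\ge 0$ by a one-line application of H\"older's inequality $(\mathbb E Y^{r})^{1-r}(\mathbb E Y^{r-1})^{r}\ge 1$. You instead invoke the subordination formula $u^{r}=\tfrac{r}{\Gamma(1-r)}\int_0^\infty (1-e^{-us})s^{-1-r}\,ds$, reduce matters to the kernel $N(s)=e^{-sc}-\tfrac12 e^{-sa_1}-\tfrac12 e^{-sa_2}$, and combine a zero-count for three-term exponential sums with a log-derivative monotonicity argument for the Laplace transforms $I_\pm(t)$. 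Both routes are valid. A couple of points worth tightening in yours: the appeal to ``Descartes' rule of signs'' is not quite what you need (in the ordering $a_1<c<a_2$ the coefficient sequence has two sign changes); the clean statement is that a nontrivial real combination of three exponentials with distinct exponents has at most two real zeros, one of which is already spent at $s=0$ since $N(0)=0$, so $K$ changes sign at most once on $(0,\infty)$. You should also treat the degenerate case $N'(0)=0$ explicitly: there $c=(a_1+a_2)/2$, and concavity of $u\mapsto u^{r}$ together with $\phi(0)\ge 0$ forces $a_1=a_2=c$ and $N\equiv 0$, so nothing goes wrong. With those clarifications the proposal is correct; it is, however, noticeably heavier machinery than the paper's H\"older step, whose advantage is that it avoids zero-counting and degenerate-case analysis entirely.
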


\begin{remark}\label{remb1}[Gaussian isoperimetric profile $B_{1,2}$]
By a theorem of Bobkov we have $B_{1,2}(x)=I(x)$ (see \cite{bobk1}). 
And so a direct application of Theorem~\ref{mthgenab} with $(\alpha,\beta)=(1,2)$ and Bobkov's extremal function $B_{1,2}=I$ yields
\begin{equation}
\label{eq:suboptbound}
    \|S_{2}(\mathbbm{1}_{A})\|_{1} \geq I(|A|),
\end{equation}
where $I(x)$ is the Gaussian isoperimetric profile
\begin{align*}
I(x)=\Phi'(\Phi^{-1}(x)), \quad \text{where} \quad \Phi(x)=\int_{-\infty}^{x}\frac{e^{-t^{2}/2}}{\sqrt{2\pi}}dt.
\end{align*}
It is known (see also \cite{bobk1}) that 
\[
I(x) \asymp x^{*} \sqrt{\log_{2}(1/x^{*})}.
\]
The bound \eqref{eq:suboptbound} is \emph{suboptimal} compared to Theorem~\ref{mth01} (it has only a square root on the logarithm). The proof of Theorem~\ref{mth01} uses a different Bellman function, defined in terms of the Brownian exit time~$\tau$, and is given in Section~\ref{sec:proof-mth01}.
\end{remark}

\begin{remark}\label{supersol}
The conclusion of Theorem~\ref{mthgenab} holds with any continuous $\widetilde{B}_{\alpha, \beta}$ satisfying condition $\widetilde{B}_{\alpha, \beta}(0)=\widetilde{B}_{\alpha, \beta}(1)=0$, and two-point inequality~\eqref{twopp}. 
\end{remark}

\begin{remark}[Comparison with an earlier version]
An earlier version of this manuscript obtained only the Gaussian-profile lower bound
\eqref{eq:suboptbound}. Theorem~\ref{mth01} is strictly stronger: for each
$p\in[0,1]$,
\[
\inf_{|A|=p}\|S_{2}(\mathbbm{1}_{A})\|_{1}
=
\mathbb E_p\sqrt{\tau},
\]
so the Brownian exit-time profile $\mathbb E_p\sqrt{\tau}$ is the exact sharp
lower profile for the $S_2$ problem, rather than merely a Gaussian-type lower
bound.
\end{remark}

It turns out that Theorem~\ref{mthgenab} is sharp for the pair $(\alpha, \beta)=(1, 1)$.

\begin{theorem}\label{mth02}
We have
\begin{align*}
\inf_{A} \| S_{1}(\mathbbm{1}_{A})\|_{1} = B_{1,1}(x)
\end{align*}
for all $x\in [0,1]$, where the infimum is taken over all Borel $A\subset{[0,1)}$ of measure $x$.
\end{theorem}

The function $B_{1,1}(x)$, unlike $B_{1,2}(x)=I(x)$, is not differentiable on $(0,1)$.
We will see that $B_{1,1}$ has a fractal-like structure, i.e., it will satisfy the functional equation
$$
B_{1,1}(x)+x=2B_{1,1}(x/2).
$$
We will also see that 
\begin{align*}
B_{1,1}(x) \geq x^{*} \log_{2}\frac{1}{x^{*}} \quad \text{for all} \quad x \in [0,1]
\end{align*}
with equality whenever $x=2^{-k}$ or $x=1-2^{-k}$ for any nonnegative integer $k\geq 0$, see Figure \ref{fig1}.\\

\begin{figure}
    \centering
    \IfFileExists{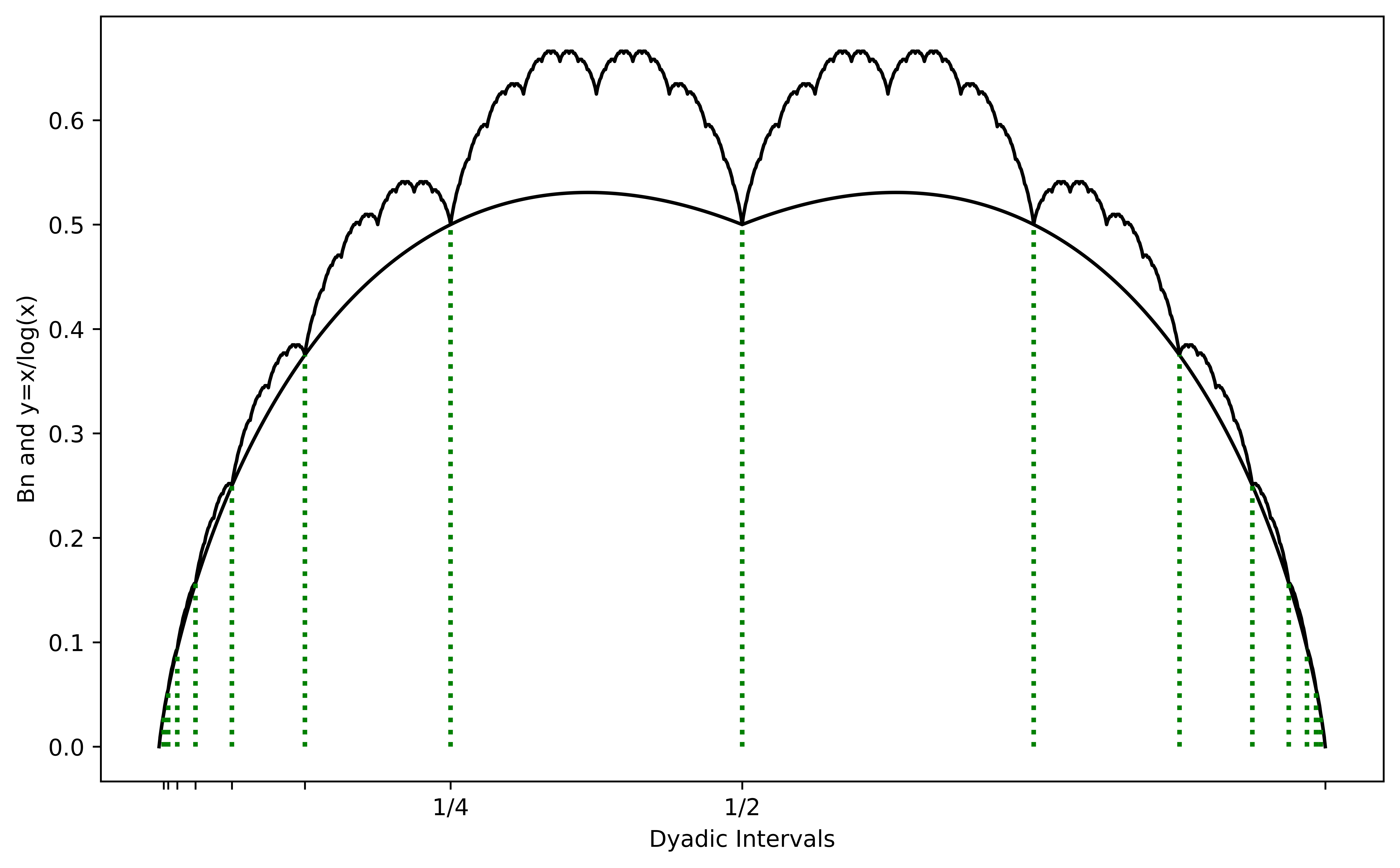}{\includegraphics[scale=0.5]{Bn4.png}}{\fbox{\texttt{Bn4.png} missing}}
    \caption{$B_{1,1}$ and $x^*\log_2(1/x^*)$. }
    \label{fig1}
\end{figure}


For any integer $n \geq 0$, let $s(n)$ be the sum of 1's in the binary representation of $n$.
 For any integer $k\geq0$, set 
\begin{equation}\label{eq:F_def}
F(k) = \sum_{j=0}^{k-1}s(j)
\end{equation}
and let $F(0)=0$. Define the sequence of functions $B_{n} : D_{n} \mapsto [0,\infty)$ as follows 
\begin{align}
B_n(x)    =   nx - \frac{1}{2^{n-1}} F(x2^n),\label{eq:bn}
\end{align}
where we recall $D_{n} = \{ \frac{k}{2^{n}}, k=0, \ldots, 2^{n}\}$. We will see that $B_{n+1}|_{D_{n}}=B_{n}$ for all $n\geq 1$.

\begin{theorem}\label{thm-bn}
We have 
\begin{itemize}
\item[(i)]
For any $x \in D$, the limit $\lim_{n \to \infty}B_{n}(x)$ exists and is denoted by $P(x)$. 

\item[(ii)] The limit $P: D \mapsto [0,\infty)$ is the pointwise maximal function defined on $D$ satisfying $P(0)=P(1)=0$, and the two-point inequality \eqref{twopp} with $(\alpha, \beta)=(1,1)$. 

\item[(iii)] There exists a universal constant $C>0$ such that
\begin{align*}
|P(x)-P(y)| \leq C |x-y| \log\left(\frac{1}{|x-y|}\right)
\end{align*}
for all $x,y \in D$ with $|x-y|<\frac{1}{2}$.
\item[(iv)] For all $x \in D$ we have $P(x)=P(1-x)$ and $P(x)+x = 2P(x/2)$. 
\item[(v)] For all $x \in D$ we have 
\begin{align*}
P(x) \geq x^{*} \log_{2}(1/x^{*})  
\end{align*}
with equality\footnote{Recall that $x^{*}=\min\{x, 1-x\}$} at points $x=2^{-k}$ and $x=1-2^{-k}$ for all nonnegative integers $k$.
\item[(vi)] We have $B_{1,1}|_{D}=P$, and $B_{1,1}$ satisfies \textup{(iii)}--\textup{(v)} for all $x \in [0,1]$. 
\end{itemize}
\end{theorem}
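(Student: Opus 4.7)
\noindent\textit{Proof proposal.} The plan is to treat the parts in the order (i), (iv), (iii), (v), (ii), (vi), as almost everything rests on the explicit formula $B_n(k/2^n) = (nk - 2F(k))/2^n$ together with two combinatorial identities for $F$: first, $F(2k) = 2F(k) + k$, obtained by pairing even and odd integers in $[0, 2k)$ using $s(2m) = s(m)$ and $s(2m+1) = s(m) + 1$; second, $F(2^n - k) = F(k) + n(2^{n-1} - k)$, obtained from the bit-complement involution $j \leftrightarrow 2^n - 1 - j$ on $[0, 2^n)$, which satisfies $s(j) + s(2^n - 1 - j) = n$. Plugging these into $B_n$ yields $B_{n+1}|_{D_n} = B_n$, so the limit $P(x) = \lim_n B_n(x)$ exists on $D$, proving (i); and direct arithmetic then yields the functional equation $P(x) + x = 2P(x/2)$ and the reflection $P(x) = P(1-x)$ of (iv).

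For (iii), the functional equation gives $P(u) - P(v) = (P(2u) - P(2v))/2 + (u - v)$ for $u, v \in D \cap [0, 1/2]$, and the reflection reduces all pairs to this range. Hence the modulus of continuity $\omega(h) := \sup\{|P(x) - P(y)| : x, y \in D,\, |x - y| \leq h\}$ satisfies $\omega(h) \leq \omega(2h)/2 + h$, which iterated $k$ times gives $\omega(h) \leq \omega(2^k h)/2^k + kh$. Choosing $k \asymp \log_2(1/h)$ and using the trivial bound $\omega(\cdot) \leq \|P\|_\infty = P(1/2) = 1/2$ yields $\omega(h) \leq C h \log_2(1/h)$, which is (iii). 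For (v), after using $P(x) = P(1-x)$ to reduce to $x \in (0, 1/2]$, the bound $P(k/2^n) \geq (k/2^n) \log_2(2^n/k)$ is equivalent to $F(k) \leq k \log_2(k)/2$. I prove this by induction over the dyadic interval $[2^m, 2^{m+1}]$ containing $k$, exploiting the identity $F(2^m + r) = m 2^{m-1} + r + F(r)$ for $0 \leq r \leq 2^m$; the inductive step reduces to the one-variable calculus lemma $u(2 + \log_2 u) \leq (1 + u)\log_2(1 + u)$ on $(0, 1]$, with equality only at $u = 1$. Equality in (v) at $x = 2^{-k}$ is immediate from $F(1) = 0$, and at $x = 1 - 2^{-k}$ by reflection.

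For (ii), the \emph{maximality} direction---that any $Q \colon D \to [0, \infty)$ satisfying \eqref{twopp} with $(\alpha, \beta) = (1, 1)$ and $Q(0) = Q(1) = 0$ obeys $Q \leq P$---is immediate by induction on $n$: the computation behind (i) also gives the \emph{adjacent equality} $P((2j+1)/2^{n+1}) = \tfrac12(P(j/2^n) + P((j+1)/2^n)) + 2^{-(n+1)}$, so applying \eqref{twopp} to $Q$ at the adjacent pair in $D_n$ and using the inductive hypothesis lifts $Q \leq P$ from $D_n$ to $D_{n+1}$. The harder direction, that $P$ itself satisfies \eqref{twopp} for \emph{all} pairs $x, y \in D$, is the main obstacle. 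Rewritten in terms of $F$, it is equivalent to the pair of combinatorial inequalities $F(a) + F(a + 2d) \leq 2F(a + d) + d$ (when the midpoint of $x, y \in D_n$ lies in $D_n$) and $F(a+b) \geq F(a) + F(b) + a$ when $b - a$ is odd (midpoint in $D_{n+1}\setminus D_n$), both of which I would attack by strong induction using the recursions $F(2k) = 2F(k) + k$ and $F(2k+1) = 2F(k) + k + s(k)$, splitting on the parities of $a$ and $d$. A conceptually cleaner alternative is to identify $P$ on $D$ with the infimum $P^*(x) := \inf\{\|S_1(\mathbbm{1}_A)\|_1 : A \in \mathcal{D},\, |A| = x\}$, which satisfies \eqref{twopp} automatically by the standard dyadic-split construction (gluing near-extremal sets for $x$ and $y$ on $[0, 1/2]$ and $[1/2, 1]$ produces a set of measure $(x+y)/2$ whose square-function norm equals $\tfrac12(P^*(x) + P^*(y)) + \tfrac12|x - y|$); comparing $P^*$ with $P$ via the maximality direction and an explicit greedy construction of extremizers then forces $P^* = P$, yielding the full two-point inequality for $P$ as a byproduct. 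For (vi): $P$ extends continuously to $[0, 1]$ by (iii), giving a continuous candidate for $B_{1, 1}$, so $P \leq B_{1, 1}|_D$; applying (ii) to $B_{1, 1}|_D$ gives the reverse inequality, so $B_{1, 1}$ is the continuous extension of $P$, and (iii)--(v) for $B_{1,1}$ on $[0,1]$ then follow from those of $P$ on $D$ by density and the continuity of $B_{1, 1}$.
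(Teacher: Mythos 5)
Your treatment of (i), (iv), (v) is correct, and in several places it is genuinely cleaner or more direct than the paper's. The bit-complement identity $F(2^n-k)=F(k)+n(2^{n-1}-k)$ gives $P(x)=P(1-x)$ by explicit computation, whereas the paper derives it from maximality; your direct proof of $F(k)\le \tfrac12 k\log_2 k$ for (v) via the one-variable lemma $u(2+\log_2 u)\le(1+u)\log_2(1+u)$ is a valid alternative to the paper's verification that $x^*\log_2(1/x^*)$ satisfies the two-point inequality (I checked: the critical point at $u=1/3$ is a maximum, $g(0^+)=g(1)=0$, so $g\ge 0$). Most strikingly, your maximality argument in (ii) via the \emph{adjacent equality} $P\bigl(\tfrac{2j+1}{2^{n+1}}\bigr)=\tfrac12\bigl(P(\tfrac{j}{2^n})+P(\tfrac{j+1}{2^n})\bigr)+2^{-(n+1)}$ (which does follow from $F(2j+1)=F(j)+F(j+1)+j$) is a short self-contained induction, much simpler than the paper's detour through the hypercube edge-isoperimetric minimizer and the reference to \cite{BIM}. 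Your approach to (iii) via the functional equations is also conceptually much lighter than the paper's long binary-representation case analysis, though as stated it has a small hole: "the reflection reduces all pairs to $[0,1/2]$" fails when $x<\tfrac12<y$, since reflecting one endpoint changes $|x-y|$. The fix is a genuine case split---if $x,y$ straddle $\tfrac12$, insert $P(\tfrac12)$, note both subpairs now lie on one side, and observe that when the doubled separation exceeds $\tfrac12$ one already has $|x-y|>\tfrac14$ and the trivial bound $\|P\|_\infty=\tfrac12$ suffices---after which the induction $\omega(h)\le\tfrac12\omega(2h)+h$ closes.

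The real gap is in (ii)(a), the proof that $P$ itself satisfies \eqref{twopp}. Both of your proposed routes are incomplete. The direct route reduces to $F(a+b)\ge F(a)+F(b)+\min(a,b)$, which the paper gets for free from Hart's characterization $F(l)=\max_{0\le m\le l/2}(F(m)+F(l-m)+m)$ \cite{hart}; your proposed strong induction with parity splits does not close as sketched. For instance, in the case $a$ even, $b$ odd with $a=2a'\le b=2b'+1$, after applying the recursions the inequality reduces to $2F(a'+b')+s(a'+b')\ge 2F(a')+2F(b')+2a'+s(b')$, and the inductive hypothesis $F(a'+b')\ge F(a')+F(b')+a'$ leaves you needing $s(a'+b')\ge s(b')$, which is false (take $a'=1$, $b'=3$). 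A more refined invariant is required, and this is precisely what Hart's max-representation supplies. Your "cleaner alternative" via $P^*(x)=\inf\{\|S_1(\mathbbm{1}_A)\|_1:|A|=x\}$ is circular: the maximality lemma gives $P^*\le P$, and the greedy gluing of extremizers also only exhibits sets achieving norm $P(x)$, hence again $P^*\le P$; the reverse inequality $P\le P^*$ requires showing every set has square-function $L^1$ norm at least $P(|A|)$, and the only tool available for that lower bound is the two-point inequality for $P$---the very thing you are trying to establish. So (ii)(a) needs a genuinely new input (Hart's identity, or a correct combinatorial induction, or some other argument), and as it stands the proposal does not prove it. Part (vi) is fine once (ii) and (iii) are in place.
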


After we wrote the first version of this paper, we learned from Grok that the function $B_{1,1}$ (equivalently, the continuous extension of the  limit $P$ from Theorem~\ref{thm-bn}) coincides with the \emph{Takagi function} \cite{AllaartKawamuraTakagiSurvey,LagariasTakagiRH}, also known as the \emph{blancmange curve}, given by 
\begin{equation}\label{eq:Tagaki}
 T(x)=\sum_{k=0}^{\infty}\frac{\operatorname{dist}(2^{k}x,\mathbb{Z})}{2^{k}}. 
\end{equation}
Namely, the class of functions $\mathcal{F}_{2}$ studied in  \cite{Lev} is equivalent (up to constant multiple of $2$) to $\mathcal{F}_{1,1}$ in our paper, and so by {\cite[Theorem 7]{Lev}} we get the following 
$$ B_{1,1}(x)= T(x)
.$$

\begin{proposition}\label{col:asympT}
Let $x^*=\min(x,1-x)$. Then
$$
T(x)\asymp x^*\log_2\frac1{x^*} \quad \text{on} \quad [0,1].
$$
\end{proposition}

We decided to move our proof of Theorem~\ref{thm-bn} to Appendix but still keep it in this paper because our proof is independent from the one in the literature and, moreover,  it gives one more description of  Takagi function $T(x)$ as $\inf_{A \subset {[0,1)},\, |A|=x} \| S_{1}(\mathbbm{1}_{A})\|_{1}$
for all $x \in [0,1]$, where the infimum is taken over all Borel sets $A\subset{[0,1)}$ of measure $x$. \\

\begin{remark}\label{rem:takagi-rh}
The Takagi function also appears in analytic number theory. In particular, work of Kanemitsu--Yoshimoto shows that the Riemann hypothesis is equivalent to a discrepancy estimate for the averages of the Takagi function along Farey fractions; see \cite{KYTakagiRH,LagariasTakagiRH} for details.
\end{remark}

In the next theorem, for each fixed $\alpha \in (0,1)$ we obtain lower bounds on $\| S_{1}(\mathbbm{1}_{A})\|_{\alpha}$ which are sharp (up to a constant factor). 

\begin{theorem}\label{mth03}
For each $\alpha \in (0,1)$ we have
\begin{align*}
\|S_{1}(\mathbbm{1}_{A})\|_{\alpha} \geq |A|^{*}
\end{align*}
holds for any Borel $A \subset {[0,1)}$. Moreover for each $\alpha\in (0,1)$ there exists a constant $C_{\alpha}>0$ and a sequence of sets $A_{j} \in \mathcal{D}$ with nonzero measure such that $\lim_{j \to \infty}|A_{j}|=0$ and
\[
\|S_{1}(\mathbbm{1}_{A_{j}})\|_{\alpha} \leq C_{\alpha} |A_{j}|.
\]
\end{theorem}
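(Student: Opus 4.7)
The plan has two disjoint parts: establish the lower bound via a trivial pointwise inequality on the square function, then exhibit sharpness by an explicit computation on a single dyadic interval.

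For the lower bound, since $A\in\mathcal{D}$ the martingale $\{(\mathbbm{1}_{A})_{n}\}$ is simple, so $\sum_{n\geq 1} d_{n}=\mathbbm{1}_{A}-|A|$ holds pointwise. The triangle inequality applied termwise gives
\[
S_{1}(\mathbbm{1}_{A})(x)\geq\Bigl|\sum_{n\geq 1}d_{n}(x)\Bigr|=|\mathbbm{1}_{A}(x)-|A||,
\]
which equals $1-|A|$ on $A$ and $|A|$ on $A^{c}$. Integrating,
\[
\|S_{1}(\mathbbm{1}_{A})\|_{\alpha}^{\alpha}\geq |A|(1-|A|)^{\alpha}+(1-|A|)|A|^{\alpha}.
\]
Assume without loss of generality $|A|\leq 1/2$, so $(1-|A|)^{\alpha}\geq |A|^{\alpha}$. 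Factoring out $|A|^{\alpha}$ rearranges the right-hand side as $|A|^{\alpha}\bigl(|A|^{1-\alpha}(1-|A|)^{\alpha}+(1-|A|)\bigr)\geq |A|^{\alpha}\bigl(|A|+(1-|A|)\bigr)=|A|^{\alpha}$, hence $\|S_{1}(\mathbbm{1}_{A})\|_{\alpha}\geq |A|=|A|^{*}$, as desired.

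For sharpness, I would take $A_{j}=[0,2^{-j})$, so $|A_{j}|=2^{-j}\to 0$. A direct computation of dyadic averages yields $f_{n}=2^{n-j}\mathbbm{1}_{[0,2^{-n})}$ for $n\leq j$, and consequently $|d_{n}|=2^{n-1-j}\mathbbm{1}_{[0,2^{-n+1})}$. Summing, on the slice $[2^{-m},2^{-m+1})$ for $m=1,\dots,j$ one gets
\[
S_{1}(\mathbbm{1}_{A_{j}})=\sum_{n=1}^{m}2^{n-1-j}=2^{-j}(2^{m}-1),
\]
and $S_{1}(\mathbbm{1}_{A_{j}})=1-2^{-j}$ on $A_{j}$ itself. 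Integration gives
\[
\|S_{1}(\mathbbm{1}_{A_{j}})\|_{\alpha}^{\alpha}=2^{-j}(1-2^{-j})^{\alpha}+2^{-j\alpha}\sum_{m=1}^{j}2^{-m}(2^{m}-1)^{\alpha}.
\]
Since $\alpha<1$, the geometric series $\sum_{m\geq 1}2^{m(\alpha-1)}$ converges to a finite constant $C_{\alpha}$, so $\|S_{1}(\mathbbm{1}_{A_{j}})\|_{\alpha}^{\alpha}\leq C'_{\alpha}\,2^{-j\alpha}$ and therefore $\|S_{1}(\mathbbm{1}_{A_{j}})\|_{\alpha}\leq C''_{\alpha}\,|A_{j}|$.

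I anticipate no substantive obstacle: the lower bound is immediate from the pointwise triangle inequality applied to the finite sum of martingale differences, and the sharpness reduces to one explicit computation on a single dyadic interval. The conceptual takeaway is that convergence of $\sum_{m}2^{m(\alpha-1)}$ for $\alpha<1$ is exactly the mechanism that fails at the endpoint $\alpha=1$, producing the extra logarithmic factor seen in Theorem~\ref{mth02}.
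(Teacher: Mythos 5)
Your proposal is correct, and for both halves it takes a genuinely more elementary route than the paper. For the lower bound, the paper sets up Bellman-function machinery (Lemmas~\ref{lem:max} and~\ref{lem:u3pt}, applied with the concrete concavity verification in Lemma~\ref{teclem} for $B(p)=2p(1-p)$), a framework designed to treat all pairs $(\alpha,\beta)$ at once. You bypass it entirely: the pointwise estimate $S_1(\mathbbm{1}_A)\geq\bigl|\sum_{n\geq1}d_n\bigr|=|\mathbbm{1}_A-|A||$ followed by the factoring
\[
|A|(1-|A|)^\alpha+(1-|A|)|A|^\alpha=|A|^\alpha\bigl(|A|^{1-\alpha}(1-|A|)^\alpha+(1-|A|)\bigr)\geq|A|^\alpha
\]
for $|A|\le\tfrac{1}{2}$ is a two-line argument requiring no Bellman function at all. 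What the paper's approach buys is a lower bound, $\|S_1(\mathbbm{1}_A)\|_\alpha\geq 2|A|(1-|A|)$, whose right-hand side is independent of $\alpha$; yours is $\alpha$-dependent, but both dominate $|A|^*$, which is all the theorem asks. For the sharpness half, the paper iterates the three-point inequality to bound the abstract Bellman function $U(2^{-k},0)\lesssim(1-\alpha)^{-1}2^{-k\alpha}$ from above without ever exhibiting an extremal set; you instead compute directly with the explicit example $A_j=[0,2^{-j})$ and arrive at the same convergent geometric series $\sum_m 2^{m(\alpha-1)}$. Your closing observation, that the divergence of this series at $\alpha=1$ is precisely what generates the extra logarithmic factor in Theorem~\ref{mth02}, is the right conceptual reading of the threshold.
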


Comparing Theorems~\ref{mth02} and \ref{mth03} we see that the sharp lower bounds on $\| S_{1}(\mathbbm{1}_{A})\|_{\alpha}$ are $|A|\log(1/|A|)$ and $|A|$ (assuming $|A|\leq1/2$) when $\alpha =1$ and $\alpha \in (0,1)$ correspondingly. Surprisingly to us, there is no such threshold behaviour in the case of a discrete gradient on the hypercube, which we discuss in the next section.\\

\begin{remark}\label{rmk:Lebesgue}
Although we formulate our results for Borel sets, all statements extend verbatim to arbitrary Lebesgue measurable sets. Indeed, for every Lebesgue measurable set $A \subset [0,1)$ there exists a Borel set $B \subset [0,1)$ such that $|A \triangle B| = 0$. Since all dyadic conditional expectations and the associated square functions $S_\beta(\mathbbm{1}_A)$ depend only on the $L^1$-equivalence class of $\mathbbm{1}_A$, it follows that
\[
S_\beta(\mathbbm{1}_A) = S_\beta(\mathbbm{1}_B) \quad \text{a.e.}
\]
and hence
\[
\|S_\beta(\mathbbm{1}_A)\|_\alpha = \|S_\beta(\mathbbm{1}_B)\|_\alpha.
\]
In particular, any inequality or extremal problem stated for Borel sets remains valid for all Lebesgue measurable sets.
\end{remark}

The purpose of the next section, which also served as the original motivation for this project, is to explore the parallels and distinctions between lower bounds for discrete gradients of Boolean functions on the hypercube (the so-called \emph{edge isoperimetric inequalities}) and dyadic square functions of indicator functions of sets.

\subsection{Discrete gradient on the hypercube}\label{hypercc}
Another way to model dyadic martingales is through the hypercube. Let $n\geq 1$, and consider the $n$-dimensional hypercube $\{-1,1\}^{n}$ equipped with uniform probability measure. Let $x_{1}, \ldots, x_{n}$ be independent identically distributed Bernoulli symmetric  $\pm 1$ random variables. Set $x = (x_{1}, \ldots, x_{n}) \in \{-1,1\}^{n}$. For any $f:\{-1,1\}^{n} \mapsto \mathbb{R}$, the sequence
\begin{align*}
&f_{0}=\mathbb{E}f(x);\\
&f_{1}(\varepsilon_{1}) = \mathbb{E}(f(x)|x_{1}=\varepsilon_{1});\\
&f_{2}(\varepsilon_{1}, \varepsilon_{2}) = \mathbb{E}(f(x)|x_{1}=\varepsilon_{1}, x_{2}=\varepsilon_{2});\\
&\ldots \\
&f_{n}(\varepsilon_{1}, \ldots, \varepsilon_{n}) = \mathbb{E} (f(x)|x_{1}=\varepsilon_{1}, \ldots, x_{n}=\varepsilon_{n}) =f(\varepsilon_{1}, \ldots, \varepsilon_{n}). 
\end{align*}
defines a dyadic martingale $\{f_{k}\}_{k=0}^{n}$. Notice that here 
\begin{align*}
&f_{0} = \frac{1}{2^{n}}\sum_{x_{1}, \ldots, x_{n}=\pm 1} f(x_{1}, \ldots, x_{n}),\\
&f_{1}(\varepsilon_{1}) = \frac{1}{2^{n-1}}\sum_{x_{2}, \ldots, x_{n}=\pm 1} f(x_{1}, x_{2}, \ldots, x_{n}),\\
&\ldots\\
&f_{n-1}(\varepsilon_{1}, \ldots, \varepsilon_{n-1})=\frac{f(\varepsilon_{1}, \ldots, \varepsilon_{n-1}, 1)+f(\varepsilon_{1}, \ldots, \varepsilon_{n-1}, -1)}{2}.
\end{align*}
Next, we define discrete derivatives. For each $j \in \{1, \ldots, n\}$ we set 
\begin{align*}
    &D_{j}f(x) = 
    \frac{f(x_{1}, \ldots, x_{j-1}, x_{j}, x_{j+1}, \ldots, x_{n})-f(x_{1}, \ldots, x_{j-1}, -x_{j}, x_{j+1}, \ldots, x_{n})}{2}.
\end{align*}
Let $\nabla f(x)= (D_{1}f(x), \ldots, D_{n}f(x))$, and for each $\beta \geq 1$  set $\beta$- gradient to be defined as 
\begin{align*}
    |\nabla f|_{\beta}(x) = \left( \sum_{j=1}^{n} |D_{j}f(x)|^{\beta}\right)^{1/\beta}.
\end{align*}

In general the functions $S_{\beta}(f)$ and $|\nabla f|_{\beta}$ are not comparable. One may ask: how different are they? Let us make a couple of observations. 

\begin{itemize}
\item[1.]If $f$ is boolean, i.e., $f(x) = \mathbbm{1}_{A}$ for some $A\subset  \{-1,1\}^{n}$ then $|\nabla f|_{\beta}$ equals to $(|\nabla f|_{1})^{1/\beta}$ up to a constant factor. Indeed, since $|2D_{j}f| \in \{0,1\}$, we have
\begin{align*}
|\nabla f|_{\beta} = \frac{1}{2}\left( \sum_{j=1}^{n} |2D_{j}f|\right)^{1/\beta} = 2^{\frac{1}{\beta}-1} (|\nabla f|_{1})^{1/\beta}. 
\end{align*}
Therefore  for boolean $f$ we always have
\begin{align}\label{giden}
\|\, |\nabla f|_{\beta} \,\|_{p} = 2^{\frac{1}{\beta}-1}  \|\, |\nabla f|_{1}\,\|_{p/\beta}^{1/\beta}.
\end{align}

The identity (\ref{giden}) shows that studying lower bounds on  $\|\, |\nabla f|_{\beta} \,\|_{p}$ is the same as studying lower bounds on $\|\, |\nabla f|_{1} \,\|_{q}$ with $q=p/\beta$.

\item[2.] Talagrand's  isoperimetric inequality with ``two-sided gradient''    says that  there exists a universal constant $C>0$ such that for  any $A\subset  \{-1,1\}^{n}$ we have 
\begin{align}\label{tal001}
    \|  |\nabla \mathbbm{1}_{A}|_{1} \|_{q} \geq C  (|A|^{*})^{1/q} \log\frac{1}{|A|^{*}}
\end{align}
holds for all $q\geq 1/2$. The estimate (\ref{tal001}) is sharp up to a universal constant factor $C$. Moreover, as soon as $q<1/2$ there is no lower bound on $\|  |\nabla f|_{1} \|_{q}$ in terms of $|A|$ independent of $n$ (see \cite{BIM} and references therein). See \cite{DIR} about the state of the art of the best constant $C$ in (\ref{tal001}).

Notice that when $q=1$ the left-hand side in (\ref{tal001}) counts the number of edges joining $A$ and $A^{c}$, and it gives the lower bound of the edge boundary of $A$ in terms of the size of $A$. This inequality ($q=1$) is known as the edge-isoperimetric inequality on the Hamming cube, see \cite{hart}. 

Combining (\ref{tal001}) and (\ref{giden}) we get that for all $2p \geq \beta \geq 1$ and any $A\subset  \{-1,1\}^{n}$  we have 
\begin{align}\label{taliso}
    \|\, |\nabla \mathbbm{1}_{A}|_{\beta} \,\|_{p} \geq C  (|A|^{*})^{1/p} \left(\log\frac{1}{|A|^{*}}\right)^{1/\beta}.
\end{align}

The reader should compare (\ref{taliso}) with lower bounds for $\| S_{\beta} (\mathbbm{1}_{A}) \|_{p}$ obtained in Theorems~\ref{mth01}, \ref{mth02} and \ref{mth03}. 
\item[3.] We have 
\begin{align*}
    &(S_{\beta}f(x))^{\beta} = \sum_{k=1}^{n} |f_{k}-f_{k-1}|^{\beta}= |\mathbb{E} (D_{1}f(x)|x_{1})|^{\beta}\\
    &+|\mathbb{E} (D_{2}f(x)|x_{1}, x_{2})|^{\beta}+\ldots+|\mathbb{E} (D_{n}f(x)|x_{1}, x_{2}, \ldots, x_{n})|^{\beta}.
\end{align*}
Thus we obtain 
\begin{align*}
\| |\nabla \mathbbm{1}_{A}|_{1}\|_{1} \geq \| S_{1}(\mathbbm{1}_{A})\|_{1} \stackrel{\mathrm{Theorem~\ref{mth02}}}{\geq} B_{1,1}(|A|)
\end{align*}

However, the equality cases in the edge-isoperimetric inequality (see \cite{BIM}) say that 
$$
\inf_{A\subset  \{-1,1\}^{n}, |A|=t}\| |\nabla \mathbbm{1}_{A}|_{1}\|_{1} = B_{1,1}(t), 
$$
therefore, Theorem~\ref{mth02} can be seen as a sharpening of the edge-isoperimetric inequality. 
\item[4.] Stein's theorem (see Theorem 8 in \cite{Stein}) implies for any $p \in (1, \infty)$ existence of a universal constant $C>0$ such that 
\begin{align*}
    \| S_{2}(f)\|_{p} \leq C \max\{p^{1/2}, (p-1)^{-1/2}\} \, \| |\nabla f|_{2} \|_{p}
\end{align*}
holds for all $f : \{-1,1\}^{n} \mapsto \mathbb{R}$ and all $n \geq 1$. 
\end{itemize}

In general, it is an interesting question to find sharp lower bounds on $\| S_{\beta}({\mathbbm{1}}_{A})\|_{p}$ in terms of the size of $|A|$. We have showed in this paper that for certain parameters $\beta$ and $p$ the sharp lower bounds on $\| S_{\beta}(\mathbbm{1}_{A})\|_{p}$ and $\|\, |\nabla \mathbbm{1}_{A}|_{\beta} \,\|_{p}$ sometimes match (up to universal constant factors) and sometimes they are different. 

\subsection*{Acknowledgments}
P.I. was supported in part by NSF CAREER grant DMS-2152401 and a Simons Fellowship. The authors acknowledge the use of AI tools: Brownian-exit-time Bellman function used in the proof of Theorem~\ref{mth01} was suggested by Grok 4.20. An earlier version of the paper instead relied on the Gaussian isoperimetric profile, which is suboptimal and incurs a square-root logarithmic loss relative to the sharp bound.

\section{Proof of Theorem \ref{mth02} and Proposition \ref{col:asympT}} 

In this section we work on {$[0,1)$} equipped with Borel measure.

\begin{lemma}\label{lem:Bellman-lower-11}
Let $Q:[0,1]\to [0,\infty)$ be continuous, with $Q(0)=Q(1)=0$. Assume that
for all $x,y\in[0,1]$,
\begin{equation}\label{eq:Q-twopoint-11}
Q\!\left(\frac{x+y}{2}\right)
\le
\frac{Q(x)+Q(y)}{2}
+
\left|\frac{x-y}{2}\right|.
\end{equation}
Then for every Borel set $A\subset{[0,1)}$,
\[
\|S_1(\mathbbm 1_A)\|_1 \ge Q(|A|).
\]
\end{lemma}

\begin{proof}
Let $f=\mathbbm 1_A\in {L^1([0,1))}$, recall that
\[
f_n=\mathbb E(f\mid \mathcal F_n),\qquad d_n=f_n-f_{n-1},
\]
for $N\ge 0$ set
\[
S_{1,N}(f)=\sum_{n=1}^N |d_n|.
\]
When $N=0$ we set $S_{1, 0}(f)=0$. We will prove by induction on $N$, that for every $N\ge 0$, we have
\begin{equation}\label{eq:finite-Bellman-11}
Q\!\left(\int_0^1 f\right)
\le
\int_0^1 S_{1,N}(f)
+
\int_0^1 Q(f_N).
\end{equation}

For $N=0$, we have $f_0=\int_0^1 f$, so \eqref{eq:finite-Bellman-11} is
immediate.
Assume now that \eqref{eq:finite-Bellman-11} holds for some $N\ge 0$.
Fix a dyadic interval $I\in\mathcal D_N$, and let $I^-,I^+$ be its two dyadic
children. Write
\[
a=\langle f\rangle_{I^-},\qquad b=\langle f\rangle_{I^+}.
\]
Then
\[
f_N|_I=\frac{a+b}{2},\qquad
f_{N+1}|_{I^-}=a,\qquad
f_{N+1}|_{I^+}=b.
\]
Applying \eqref{eq:Q-twopoint-11} to $a$ and $b$, we obtain
\[
Q\!\left(\frac{a+b}{2}\right)
\le
\frac{Q(a)+Q(b)}{2}
+
\left|\frac{a-b}{2}\right|.
\]
Multiplying by $|I|$ gives
\[
\int_I Q(f_N)
\le
\int_I Q(f_{N+1})
+
|I|\left|\frac{a-b}{2}\right|.
\]
On the other hand, since $f_{N+1}-f_N$ is constant on each child of $I$,
\[
\int_I |f_{N+1}-f_N|
=
\frac{|I|}{2}\left|a-\frac{a+b}{2}\right|
+
\frac{|I|}{2}\left|b-\frac{a+b}{2}\right|
=
|I|\left|\frac{a-b}{2}\right|.
\]
Hence
\[
\int_I Q(f_N)
\le
\int_I Q(f_{N+1})
+
\int_I |f_{N+1}-f_N|.
\]
Summing over all $I\in\mathcal D_N$, we get
\[
\int_0^1 Q(f_N)
\le
\int_0^1 Q(f_{N+1})
+
\int_0^1 |f_{N+1}-f_N|.
\]
Combining this with the induction hypothesis yields
\begin{small}
    \begin{align*}
Q\!\left(\int_0^1 f\right)
&\le
\int_0^1 S_{1,N}(f)
+
\int_0^1 Q(f_N) \\
&\le
\int_0^1 S_{1,N}(f)
+
\int_0^1 |f_{N+1}-f_N|
+
\int_0^1 Q(f_{N+1}) \\
&=
\int_0^1 S_{1,N+1}(f)
+
\int_0^1 Q(f_{N+1}),
\end{align*}
\end{small}
which proves \eqref{eq:finite-Bellman-11} for $N+1$.
Now let $N\to\infty$. Since $f=\mathbbm 1_A$, dyadic martingale convergence
gives
\[
f_N\to f
\qquad\text{a.e. on }{[0,1)}.
\]
Because $f\in\{0,1\}$ a.e. and $Q(0)=Q(1)=0$, we have
\[
Q(f_N)\to 0
\qquad\text{a.e. on }{[0,1)}.
\]
Since $Q$ is continuous on $[0,1]$, it is bounded, so dominated convergence
implies
\[
\int_0^1 Q(f_N)\to 0.
\]
Also, $S_{1,N}(f)\uparrow S_1(f)$ pointwise, so by monotone convergence,
\[
\int_0^1 S_{1,N}(f)\to \int_0^1 S_1(f).
\]
Passing to the limit in \eqref{eq:finite-Bellman-11}, we obtain
\[
Q(|A|)
=
Q\!\left(\int_0^1 f\right)
\le
\int_0^1 S_1(f)
=
\|S_1(\mathbbm 1_A)\|_1.
\]
This proves the lemma.
\end{proof}

\begin{proof}[Proof of Theorem \ref{mth02}]
Applying Lemma~\ref{lem:Bellman-lower-11} with $Q=B_{1,1}$, we obtain
\begin{equation}\label{eq:S1-lower-B11-clean}
\|S_1(\mathbbm 1_A)\|_1 \ge B_{1,1}(|A|)
\end{equation}
for every Borel set $A\subset{[0,1)}$.
We next compute the square function explicitly for an interval. Fix
$x\in[0,1]$, and set
\[
A=[0,x),\qquad f=\mathbbm 1_{[0,x)}.
\]
For each $n\ge 1$, let
\[
x\,2^{n-1}=m+r,
\qquad m\in\mathbb Z_{\ge 0},\quad r\in[0,1).
\]
If $r=0$, then $x$ is an endpoint of some dyadic interval of level $n-1$,
so $f$ is constant on each such interval and therefore
\[
d_n=0.
\]
Assume now that $0<r<1$. Then for  dyadic intervals of level $n-1$ on which
$f$ is constant contribute nothing to $d_n$, and the only possible nonzero
contribution comes from the unique dyadic interval
\[
I=\left[\frac{m}{2^{n-1}},\frac{m+1}{2^{n-1}}\right)
\]
whose interior contains $x$.
Let $I^-,I^+$ be the two dyadic children of $I$.

If $0<r\le \frac12$, then
\[
\langle f\rangle_{I^-}=2r,\qquad \langle f\rangle_{I^+}=0,
\qquad \langle f\rangle_I=r.
\]
Hence
\[
\int_I |d_n|
=
\frac{|I|}{2}\,|2r-r|
+
\frac{|I|}{2}\,|0-r|
=
|I|\,r
=
\frac{r}{2^{n-1}}.
\]

If $\frac12<r<1$, then
\[
\langle f\rangle_{I^-}=1,\qquad \langle f\rangle_{I^+}=2r-1,
\qquad \langle f\rangle_I=r.
\]
Therefore
\[
\int_I |d_n|
=
\frac{|I|}{2}\,|1-r|
+
\frac{|I|}{2}\,|2r-1-r|
=
|I|\,(1-r)
=
\frac{1-r}{2^{n-1}}.
\]

Combining the two cases, we obtain
\[
\|d_n\|_1
=
\frac{\min\{r,1-r\}}{2^{n-1}}
=
\frac{\operatorname{dist}(2^{n-1}x,\mathbb Z)}{2^{n-1}},
\qquad n\ge 1.
\]
Thus, by monotone convergence,
\begin{align*}
\|S_1(\mathbbm 1_{[0,x)})\|_1
&=
\int_0^1 \sum_{n=1}^\infty |d_n|
=
\sum_{n=1}^\infty \|d_n\|_1 \\
&=
\sum_{n=1}^\infty
\frac{\operatorname{dist}(2^{n-1}x,\mathbb Z)}{2^{n-1}}
=
\sum_{m=0}^\infty \frac{\operatorname{dist}(2^m x,\mathbb Z)}{2^m}
=
T(x).
\end{align*}
Hence
\[
\inf_{\substack{A\subset{[0,1)}\ \mathrm{Borel}\\ |A|=x}}
\|S_1(\mathbbm 1_A)\|_1
\le
\|S_1(\mathbbm 1_{[0,x)})\|_1
=
T(x).
\]

On the other hand, taking the infimum in \eqref{eq:S1-lower-B11-clean}, we get
\[
\inf_{\substack{A\subset{[0,1)}\ \mathrm{Borel}\\ |A|=x}}
\|S_1(\mathbbm 1_A)\|_1
\ge
B_{1,1}(x).
\]
Since $B_{1,1}(x)=T(x)$ on $[0,1]$ by \cite[Theorem 7]{Lev}, we conclude that
\[
\inf_{\substack{A\subset{[0,1)}\ \mathrm{Borel}\\ |A|=x}}
\|S_1(\mathbbm 1_A)\|_1
=
B_{1,1}(x)
=
T(x),
\]
which proves the theorem.
\end{proof}


\begin{proof}[Proof of Proposition \ref{col:asympT}]
Using the Takagi representation \eqref{eq:Tagaki}
and the symmetry $T(x)=T(1-x)$, it is enough to consider
$
u:=x^*\in(0,1/2].
$
Set
$$
N:=\left\lfloor \log_2\frac{1}{u}\right\rfloor\implies 2^{-N-1}<u\le 2^{-N}.
$$
For $0\le n\le N-1$ we have $2^n u\le 1/2$, and so
$
\operatorname{dist}(2^n u,\mathbb Z)=2^n u.
$
Thus
$$
\sum_{n=0}^{N-1}\frac{\operatorname{dist}(2^n u,\mathbb Z)}{2^n}
=
\sum_{n=0}^{N-1}u
=
Nu.
$$
For the tail, since $\operatorname{dist}(t,\mathbb Z)\le 1/2$ for all $t\in\mathbb R$,
$$
\sum_{n=N}^\infty \frac{\operatorname{dist}(2^n u,\mathbb Z)}{2^n}
\le
\sum_{n=N}^\infty \frac{1/2}{2^n}
=
2^{-N}
\le 2u.
$$
Therefore
$
Nu\le T(u)\le Nu+2u.
$
Since $u=x^*$, and
$
N\le \log_2\frac{1}{u}<N+1.
$
We get
$$
T(x^*)= x^*\log_2\frac{1}{x^*}+O(x^*) .
$$
This finishes the proof of Proposition~\ref{col:asympT}.
\end{proof}



\section{Proof of Theorem \ref{mthgenab} and Remark \ref{supersol}}
\label{sec:proof-mthgenab}

In this section we prove Theorem~\ref{mthgenab}. Thus, for every pair $(\alpha,\beta)$ with $\beta\ge 1\ge \alpha>0$, we will show that $\|S_\beta(\mathbbm{1}_A)\|_\alpha \ge B_{\alpha,\beta}(|A|)$ for every Borel set $A\subset{[0,1)}$. We will also prove
Remark~\ref{supersol} under the standing continuity assumption on the
supersolution.

Recall that for $f\in {L^1([0,1))}$ we define $f_n=\mathbb E(f\mid\mathcal F_n)$ and $d_n=f_n-f_{n-1}$, and for $N\ge1$,
\[
S_\beta(f)=\lim_{N\to\infty} S_{\beta,N}(f)
\quad 
\text{where}
\quad
S_{\beta,N}(f)=\left(\sum_{n=1}^N |d_n|^\beta\right)^{1/\beta},
.
\]

We begin with the finite-level Bellman iteration.

\begin{lemma}\label{lem:51}
Let $Q,M:[0,1]\times[0,\infty)\to\mathbb R$ be Borel measurable
functions such that
\begin{equation}\label{eq:3pi}
Q\!\left(p+a,\,(|a|^\beta+q^\beta)^{1/\beta}\right)
+
Q\!\left(p-a,\,(|a|^\beta+q^\beta)^{1/\beta}\right)
\ge 2Q(p,q)
\end{equation}
for all $p\in[0,1]$, all $q\ge0$, and all $a\in\mathbb R$ with
$p\pm a\in[0,1]$. Assume also that
\[
Q(p,q)\le M(p,q)
\qquad \text{for all } p\in[0,1],\ q\ge0.
\]
Then for every $N\ge1$ and every $\mathcal F_N$-measurable function
$f:[0,1]\to[0,1]$,
\begin{equation}\label{eq:finite-level-bellman}
Q\!\left(\int_0^1 f,\ q\right)
\le
\int_0^1
M\!\left(f,\,(S_{\beta,N}(f)^\beta+q^\beta)^{1/\beta}\right).
\end{equation}
\end{lemma}

\begin{proof}
For $0\le n\le N$, define
\[
\Phi_n
=
\sum_{I\in\mathcal D_n}
|I|\,Q\!\left(\langle f\rangle_I,\ q_I^{(n)}\right),
\]
where
\[
q_I^{(n)}
=
\left(
q^\beta+\sum_{k=1}^n |d_k|_I^\beta
\right)^{1/\beta},
\]
and $|d_k|_I$ denotes the constant value of $|d_k|$ on $I$. This is
well-defined because $d_k$ is $\mathcal F_k$-measurable and hence is
constant on every $I\in\mathcal D_n$ whenever $k\le n$.

For $n=0$ we have
\[
\Phi_0=Q\!\left(\int_0^1 f,\ q\right).
\]

We claim that
\[
\Phi_{n-1}\le \Phi_n
\qquad \text{for every } 1\le n\le N.
\]
Fix $I\in\mathcal D_{n-1}$, and write $I^-,I^+$ for its two dyadic
children. Set
\[
p_I=\langle f\rangle_I,
\qquad
a_I=\frac{\langle f\rangle_{I^+}-\langle f\rangle_{I^-}}{2}.
\]
Then
\[
\langle f\rangle_{I^-}=p_I-a_I,
\qquad
\langle f\rangle_{I^+}=p_I+a_I.
\]
Moreover, on $I^-$ and $I^+$ we have
\[
d_n=-a_I \quad \text{and} \quad d_n=a_I,
\]
respectively. Hence
\[
q_{I^-}^{(n)}=q_{I^+}^{(n)}
=
\left((q_I^{(n-1)})^\beta+|a_I|^\beta\right)^{1/\beta}.
\]
Applying \eqref{eq:3pi} with $p=p_I$, $a=a_I$, and $q=q_I^{(n-1)}$, we obtain
\begin{align*}
|I|\,Q(p_I,q_I^{(n-1)})
&\le
\frac{|I|}{2}\,
Q\!\left(p_I-a_I,\ q_{I^-}^{(n)}\right)
+
\frac{|I|}{2}\,
Q\!\left(p_I+a_I,\ q_{I^+}^{(n)}\right) \\
&=
|I^-|\,Q\!\left(\langle f\rangle_{I^-}, q_{I^-}^{(n)}\right)
+
|I^+|\,Q\!\left(\langle f\rangle_{I^+}, q_{I^+}^{(n)}\right).
\end{align*}
Summing over all $I\in\mathcal D_{n-1}$ gives $\Phi_{n-1}\le \Phi_n$.
Therefore
\[
Q\!\left(\int_0^1 f,\ q\right)=\Phi_0\le \Phi_N.
\]
Since $f$ is $\mathcal F_N$-measurable, it is constant on each
$J\in\mathcal D_N$, so $\langle f\rangle_J=f|_J$. Also,
\[
q_J^{(N)}
=
\left(S_{\beta,N}(f)^\beta+q^\beta\right)^{1/\beta}
\quad \text{on } J.
\]
Thus, using $Q\le M$,
\begin{small}
    \begin{align*}
\Phi_N
&=
\sum_{J\in\mathcal D_N}
|J|\,
Q\!\left(f|_J,\ (S_{\beta,N}(f)^\beta+q^\beta)^{1/\beta}\right) \\
&\le
\sum_{J\in\mathcal D_N}
|J|\,
M\!\left(f|_J,\ (S_{\beta,N}(f)^\beta+q^\beta)^{1/\beta}\right) =
\int_0^1
M\!\left(f,\ (S_{\beta,N}(f)^\beta+q^\beta)^{1/\beta}\right).
\end{align*}
\end{small}
This proves \eqref{eq:finite-level-bellman}.
\end{proof}

We now verify that the Bellman candidate \eqref{eq:QB} satisfies the required three-point inequality.

\begin{lemma}\label{lem:u3pt}
Let $\beta\ge1\ge\alpha>0$, and let $B:[0,1]\to[0,\infty)$ satisfy the
two-point inequality \eqref{twopp} and the boundary condition
\[
B(0)=B(1)=0.
\]
Define
\begin{equation}\label{eq:QB}
Q_B(p,q)=\left(B(p)^\beta+q^\beta\right)^{\alpha/\beta}.
\end{equation}
Then $Q_B$ satisfies the three-point inequality \eqref{eq:3pi}. Moreover,
\[
Q_B(0,q)=Q_B(1,q)=q^\alpha
\qquad \text{for all } q\ge0.
\]
\end{lemma}

\begin{proof}
The endpoint identities are immediate from $B(0)=B(1)=0$.
Set
\[
r=\frac{\alpha}{\beta}\in(0,1].
\]
Fix $p\in[0,1]$, $q\ge0$, and $a\in\mathbb R$ such that $p\pm a\in[0,1]$.
Let
\[
u=B(p+a)^\beta+|a|^\beta,
\qquad
v=B(p-a)^\beta+|a|^\beta.
\]
By the two-point inequality \eqref{twopp}, written with $x=p+a$ and
$y=p-a$, we have
\[
\frac{u^r+v^r}{2}\ge B(p)^\alpha.
\]

If $r=1$, then $\alpha=\beta$, and therefore
\[
\frac{u+v}{2}\ge B(p)^\beta.
\]
Adding $q^\beta$ to both sides yields
\[
\frac{(u+q^\beta)+(v+q^\beta)}{2}
\ge
B(p)^\beta+q^\beta,
\]
which is exactly \eqref{eq:3pi} for $Q_B$.

Assume now that $0<r<1$. Define
\[
\psi(t)=\left(\frac{(u+t)^r+(v+t)^r}{2}\right)^{1/r}-t,
\qquad t\ge0.
\]
We claim that $\psi$ is nondecreasing on $[0,\infty)$.
For $t>0$, let $Y$ be the random variable taking the values $u+t$ and
$v+t$ with probability $1/2$. Then
\[
\psi(t)=\left(\mathbb E Y^r\right)^{1/r}-t,
\]
and differentiation gives
\[
\psi'(t)
=
\left(\mathbb E Y^r\right)^{1/r-1}\mathbb E Y^{r-1}-1.
\]
Since $Y>0$ almost surely and the power means are monotone in the exponent,
\[
\left(\mathbb E Y^r\right)^{1/r}
\ge
\left(\mathbb E Y^{r-1}\right)^{1/(r-1)}.
\]
Because $r-1<0$, raising both sides to the power $r-1$ reverses the
inequality, and we obtain
\[
\left(\mathbb E Y^r\right)^{(r-1)/r}
\le
\mathbb E Y^{r-1}.
\]
Therefore
\[
\left(\mathbb E Y^r\right)^{1/r-1}\mathbb E Y^{r-1}\ge1,
\]
so $\psi'(t)\ge0$ for all $t>0$. Hence $\psi$ is nondecreasing on
$[0,\infty)$.
Taking $t=q^\beta$ and using the inequality at $t=0$, we get
\[
\psi(q^\beta)\ge \psi(0)\ge B(p)^\beta.
\]
That is,
\[
\left(
\frac{(u+q^\beta)^r+(v+q^\beta)^r}{2}
\right)^{1/r}
\ge
B(p)^\beta+q^\beta.
\]
Raising both sides to the power $r$ yields
\[
\frac{(u+q^\beta)^r+(v+q^\beta)^r}{2}
\ge
\left(B(p)^\beta+q^\beta\right)^r,
\]
which is precisely \eqref{eq:3pi} for $Q_B$.
\end{proof}

\begin{proof}[Proof of Theorem~\ref{mthgenab} and Remark~\ref{supersol} in the continuous case]
Let $B:[0,1]\to[0,\infty)$ be continuous, satisfy the two-point
inequality \eqref{twopp}, and satisfy
\[
B(0)=B(1)=0.
\]
Define
\[
Q_B(p,q)=\left(B(p)^\beta+q^\beta\right)^{\alpha/\beta}.
\]
By Lemma~\ref{lem:u3pt}, the function $Q_B$ satisfies the three-point
inequality \eqref{eq:3pi}.

Let $A\subset{[0,1)}$ be Borel, and set
\[
f=\mathbbm 1_A,
\qquad
f_N=\mathbb E(f\mid\mathcal F_N).
\]
Then $f_N$ is $\mathcal F_N$-measurable, $0\le f_N\le1$, and
\[
\int_0^1 f_N=\int_0^1 f=|A|.
\]
Also, the first $N$ martingale differences of $f_N$ agree with those of
$f$, and therefore
\[
S_{\beta,N}(f_N)=S_{\beta,N}(f).
\]

Applying Lemma~\ref{lem:51} with $Q=M=Q_B$ and $q=0$, we obtain
\begin{small}
    \begin{align*}
B(|A|)^\alpha
=
Q_B(|A|,0)
&\le
\int_0^1
Q_B\!\left(f_N,S_{\beta,N}(f)\right) =
\int_0^1
\left(B(f_N)^\beta + S_{\beta,N}(f)^\beta\right)^{\alpha/\beta}.
\end{align*}
\end{small}
Since $0<\alpha/\beta\le1$, we have
\[
(u+v)^{\alpha/\beta}\le u^{\alpha/\beta}+v^{\alpha/\beta}
\qquad \text{for all } u,v\ge0.
\]
Hence
\[
B(|A|)^\alpha
\le
\int_0^1 B(f_N)^\alpha
+
\int_0^1 S_{\beta,N}(f)^\alpha.
\]

Now $f_N\to f=\mathbbm 1_A$ almost everywhere, and since $0\le f_N\le1$,
$B$ is continuous on $[0,1]$ and vanishes at $0$ and $1$, dominated
convergence gives
\[
\lim_{N\to\infty}\int_0^1 B(f_N)^\alpha = 0.
\]
Also, $S_{\beta,N}(f)\uparrow S_\beta(f)$ pointwise, so by monotone
convergence,
\[
\lim_{N\to\infty}\int_0^1 S_{\beta,N}(f)^\alpha
=
\int_0^1 S_\beta(f)^\alpha
=
\|S_\beta(\mathbbm 1_A)\|_\alpha^\alpha.
\]
Therefore
$
B(|A|)^\alpha \le \|S_\beta(\mathbbm 1_A)\|_\alpha^\alpha,
$
and hence
\[
\|S_\beta(\mathbbm 1_A)\|_\alpha \ge B(|A|).
\]

Taking $B=B_{\alpha,\beta}$ proves Theorem~\ref{mthgenab}. The same
argument applies to every continuous supersolution
$\widetilde B_{\alpha,\beta}$ satisfying the two-point inequality
\eqref{twopp} and the boundary conditions
\[
\widetilde B_{\alpha,\beta}(0)=\widetilde B_{\alpha,\beta}(1)=0.
\]
This proves Remark~\ref{supersol} in the continuous case.
\end{proof}

This finishes the proof of Theorem~\ref{mthgenab}. In particular, taking
$B=B_{1,1}$ and using the identification $B_{1,1}=T$, we obtain
\[
\|S_1(\mathbbm 1_A)\|_1 \ge T(|A|)\ge |A|^*\log_2\!\left(\frac{1}{|A|^*}\right),
\]
where the last inequality follows from Theorem~\ref{thm-bn} part $\textup{(v)}$ 
(this can also be found in \cite{Lev}).
Likewise, taking $B=B_{1,2}=I$, we obtain
\[
\|S_2(\mathbbm 1_A)\|_1 \ge I(|A|)\asymp |A|^*\sqrt{\log(1/|A|^*)}.
\]
The sharper estimate of Theorem~\ref{mth01} is proved in
Section~\ref{sec:proof-mth01}.

\section{Proof of Theorem \ref{mth01} and Proposition \ref{prop:const}}\label{sec:proof-mth01} 

In this section we complete the proof of Theorem~\ref{mth01} in the Borel-set setting. We continue to work with the dyadic filtration $(\mathcal F_n)$ on {$[0,1)$}, and for a Borel set $A\subset{[0,1)}$ we define $S_2(\mathbbm 1_A)$ through the dyadic martingale generated by $\mathbbm 1_A$.

We will reuse the tree-iteration framework from Section~\ref{sec:proof-mthgenab}. 
The difference in the present case is that our Bellman function only satisfies an endpoint obstacle condition, rather than a global obstacle condition (due to the fact that $f$ is boolean). 
Accordingly, we first show the following dyadic endpoint-obstacle variant of Lemma~\ref{lem:51}.

\begin{corollary}\label{cor:endpoint-lemma}
Let $Q:[0,1]\times[0,\infty)\to\mathbb R$ be a Borel measurable function satisfying 
\begin{equation}\label{eq:3pi-S2}
Q\!\left(p+a,\sqrt{q^{2}+a^{2}}\right)
+
Q\!\left(p-a,\sqrt{q^{2}+a^{2}}\right)
\ge 2Q(p,q)
\end{equation}
for all $p\in[0,1]$, all $q\ge0$, and all $a\in\mathbb R$ such that $p\pm a\in[0,1]$. Let $\phi:[0,\infty)\to\mathbb R$ be a Borel measurable function satisfying 
\begin{equation}\label{eq:endpoint-obstacle}
Q(0,q)\le \phi(q^2),\qquad Q(1,q)\le \phi(q^2)
\qquad\text{for all }q\ge0.
\end{equation}
Then for every $A\in\mathcal D$,
\[
Q(|A|,0)\le \int_0^1 \phi\!\big(S_2(\mathbbm 1_A)^2\big).
\]
\end{corollary}

\begin{proof}
The proof is the same tree iteration as in Lemma~\ref{lem:51}, with $f=\mathbbm 1_A$. 
Since $A\in\mathcal D$, there exists $N$ such that $f$ is $\mathcal F_N$-measurable. 
Applying the proof of Lemma~\ref{lem:51} with $\beta=2$ and $q=0$, we obtain
\[
Q(|A|,0)\le \Phi_N,
\]
where $\Phi_N$ is the terminal Bellman sum over dyadic atoms of level $N$.

At the terminal level $N$, the function $f$ takes only the values $0$ and $1$ on dyadic atoms, and moreover $d_n=0$ for all $n>N$, so
\[
S_2(f)=S_{2,N}(f).
\]
Thus, on each dyadic atom $J\in\mathcal D_N$, the endpoint obstacle condition \eqref{eq:endpoint-obstacle} gives
\[
Q\!\left(f|_J,\ S_2(f)|_J\right)\le \phi\!\left(S_2(f)|_J^2\right).
\]
Summing over all such $J$, we obtain
\[
\Phi_N\le \int_0^1 \phi\!\big(S_2(\mathbbm 1_A)^2\big),
\]
and therefore
\[
Q(|A|,0)\le \int_0^1 \phi\!\big(S_2(\mathbbm 1_A)^2\big),
\]
as claimed.
\end{proof}





We now construct the Bellman function needed for the sharp lower bound.
Let $(B_t)_{t\ge0}$ be a standard Brownian motion started at $B_0=p\in[0,1]$, and
\[
\tau:=\inf\{t\ge0:\, B_t\notin(0,1)\}
\]
be its first exit time from $(0,1)$. \\

Let $\phi:[0,\infty)\to\mathbb R$ be concave, such that for every $r\in[0,1]$ and $s\ge 0$
\begin{equation}
\label{eq:abs-int}
    \mathbb E_r\!\bigl[\,|\phi(s+\tau)|\,\bigr]<\infty.
\end{equation}
Set
\begin{equation}\label{eq:Utilde-def-alpha}
U_\phi(p,q):=\mathbb E_p\!\big[\phi(q^2+\tau)\big], \qquad p\in[0,1],\ q\ge 0.
\end{equation}

\begin{lemma}\label{lem:one-step-concavity}
Let $\phi:[0,\infty)\to\mathbb R$ be concave satisfing \eqref{eq:abs-int}.
Let $U_\phi$ be defined by \eqref{eq:Utilde-def-alpha}. Then
\[
U_\phi(0,q)=U_\phi(1,q)=\phi(q^2), \qquad q\ge 0,
\]
and for every $p\in[0,1]$, every $q\ge 0$, and every $a\in\mathbb R$ such that
$p\pm a\in[0,1]$,
\[
U_\phi\!\left(p+a,\sqrt{q^2+a^2}\right)
+
U_\phi\!\left(p-a,\sqrt{q^2+a^2}\right)
\ge 2\,U_\phi(p,q).
\]
\end{lemma}

\begin{proof}
The boundary condition is immediate. Indeed for $p\in\{0,1\}$, then $\tau=0$ almost
surely, hence
\[
U_\phi(p,q)=\mathbb E_p[\phi(q^2+\tau)]=\phi(q^2).
\]

To prove the inequality, note that it is invariant under $a\mapsto -a$, so it
is enough to consider $a\ge 0$. The case $a=0$ is trivial, so assume $a>0$.

Define
\[
\sigma:=\inf\{t\ge 0:\,|B_t-p|=a\},
\quad
A_+:=\{B_\sigma=p+a\},
\quad
A_-:=\{B_\sigma=p-a\}.
\]
Because $p\pm a\in[0,1]$ and Brownian paths are continuous, the process must hit
one of the points $p-a$ or $p+a$ before it can exit $(0,1)$. So $\sigma\le \tau$ a.s.

Define the martingale $M_t:=(B_t-p)^2-t$.
By optional stopping at $t\wedge \sigma$,
\[
\mathbb E_p[M_{t\wedge \sigma}]=\mathbb E_p[M_0]=0,
\]
so
\[
\mathbb E_p[t\wedge \sigma]
=
\mathbb E_p[(B_{t\wedge \sigma}-p)^2].
\]
Since $|B_{t\wedge \sigma}-p|\le a$, the right-hand side is bounded by $a^2$.
Letting $t\to\infty$ and using monotone convergence on the left and dominated
convergence on the right, we obtain
\[
\mathbb E_p[\sigma]
=
\mathbb E_p[(B_\sigma-p)^2]
=
a^2.
\]

Now we use the symmetry of Brownian motion when it first hits one of the two
points $p-a$ or $p+a$. Under $\mathbb P_p$, the shifted process
\[
X_t:=B_t-p
\]
is standard Brownian motion starting at $0$. Brownian motion is symmetric, so
$X_t$ and $-X_t$ have the same distribution. Also, replacing $X$ by $-X$ does
not change the stopping time
\[
\sigma=\inf\{t\ge 0:\,|X_t|=a\},
\]
because $\sigma$ only depends on $|X_t|$. On the other hand, this reflection
swaps the two events of first hitting $p\pm a$. In particular, for every nonnegative measurable function
$h$,
\[
\mathbb E_p[h(\sigma)\mathbbm 1_{A_+}]
=
\mathbb E_p[h(\sigma)\mathbbm 1_{A_-}]
=
\frac12\,\mathbb E_p[h(\sigma)].
\]

By applying this identity to the positive and negative parts, the same formula
holds for every integrable function $h$.

For each $r\in[0,1]$, define
\[
G_r(s):=\mathbb E_r[\phi(s+\tau)],
\qquad
H_r(s):=\mathbb E_r[|\phi(s+\tau)|],
\qquad s\ge 0.
\]
These are finite by assumption. Moreover, $G_r$ is concave on $[0,\infty)$.
Indeed, if $s_1,s_2\ge 0$ and $\theta\in[0,1]$, then concavity of $\phi$ gives
\[
\phi\!\left(\theta(s_1+\tau)+(1-\theta)(s_2+\tau)\right)
\ge
\theta\,\phi(s_1+\tau)+(1-\theta)\,\phi(s_2+\tau)
\]
almost surely. Taking $\mathbb E_r$ yields
\[
G_r(\theta s_1+(1-\theta)s_2)
\ge
\theta G_r(s_1)+(1-\theta)G_r(s_2).
\]

By the strong Markov property at time $\sigma$,
\[
\mathbb E_p\!\left[\,|\phi(q^2+\tau)|\,\middle|\,\mathcal F_\sigma\right]
=
H_{B_\sigma}(q^2+\sigma)
=
\mathbbm 1_{A_+}H_{p+a}(q^2+\sigma)
+
\mathbbm 1_{A_-}H_{p-a}(q^2+\sigma).
\]
Taking expectations yields
\[
\mathbb E_p\!\bigl[\mathbbm 1_{A_+}H_{p+a}(q^2+\sigma)\bigr]
+
\mathbb E_p\!\bigl[\mathbbm 1_{A_-}H_{p-a}(q^2+\sigma)\bigr]
=
\mathbb E_p\!\bigl[|\phi(q^2+\tau)|\bigr]
<\infty.
\]
Applying the reflection identity twice with
$h(\sigma)=H_{p\pm a}(q^2+\sigma)$, we obtain
\[
\mathbb E_p[H_{p\pm a}(q^2+\sigma)]<\infty.
\]
Since $|G_r|\le H_r$, it follows that
\[
G_{p+a}(q^2+\sigma),\qquad G_{p-a}(q^2+\sigma)
\]
are integrable.

Now apply the strong Markov property at time $\sigma$ to $\phi(q^2+\tau)$,
\[
\mathbb E_p\!\left[\,\phi(q^2+\tau)\,\middle|\,\mathcal F_\sigma\right]
=
G_{B_\sigma}(q^2+\sigma)
=
\mathbbm 1_{A_+}G_{p+a}(q^2+\sigma)
+
\mathbbm 1_{A_-}G_{p-a}(q^2+\sigma).
\]
Taking expectations and using the reflection identity for the integrable
functions $h(\sigma)=G_{p+a}(q^2+\sigma)$ and
$h(\sigma)=G_{p-a}(q^2+\sigma)$, we get
\[
U_\phi(p,q)
=
\frac12\,\mathbb E_p\!\bigl[G_{p+a}(q^2+\sigma)\bigr]
+
\frac12\,\mathbb E_p\!\bigl[G_{p-a}(q^2+\sigma)\bigr].
\]

Since each $G_r$ is concave and $\mathbb E_p[\sigma]=a^2$, Jensen's inequality
gives
\[
\mathbb E_p\!\bigl[G_r(q^2+\sigma)\bigr]
\le
G_r(q^2+\mathbb E_p[\sigma])
=
G_r(q^2+a^2).
\]
By definition of $G_r$,
\[
G_r(q^2+a^2)
=
\mathbb E_r[\phi(q^2+a^2+\tau)]
=
U_\phi\!\left(r,\sqrt{q^2+a^2}\right).
\]
Applying this with $r=p+a$ and $r=p-a$, we conclude that
\[
U_\phi(p,q)
\le
\frac12\,U_\phi\!\left(p+a,\sqrt{q^2+a^2}\right)
+
\frac12\,U_\phi\!\left(p-a,\sqrt{q^2+a^2}\right).
\]
Multiplying by $2$ proves the claim.
\end{proof}

As an immediate consequence of Corollary~\ref{cor:endpoint-lemma}, we obtain the lower bound for dyadic sets.

\begin{corollary}\label{cor:dyadic-lowerbound-S2}
For $\phi(t) = \sqrt{t}$, define
\begin{equation}\label{eq:Utilde-def}
\tilde U(p,q):=\mathbb E_p\sqrt{q^2+\tau},
\qquad p\in[0,1],\ q\ge0.
\end{equation}
For every $A\in\mathcal D$,
\[
\|S_2(\mathbbm 1_A)\|_1\ge \tilde U(|A|,0)=\mathbb E_{|A|}\sqrt{\tau}.
\]
\end{corollary}

\begin{proof}
Apply Corollary~\ref{cor:endpoint-lemma} with $Q=\tilde U$, and use Lemma~\ref{lem:one-step-concavity}.
\end{proof}

We now pass from dyadic sets to arbitrary Borel sets.

\begin{proposition}\label{prop:borel-extension-S2}
For every Borel set $A\subset{[0,1)}$,
\[
\|S_2(\mathbbm 1_A)\|_1\ge \tilde U(|A|,0).
\]
\end{proposition}

\begin{proof}
Let $f=\mathbbm 1_A$, and define
\[
f_n:=\mathbb E(f\mid\mathcal F_n),
\qquad
A_n:=\{f_n>1/2\}.
\]
Then $A_n\in\mathcal F_n\subset\mathcal D$. Since $f_n\to f$ almost everywhere and $f\in\{0,1\}$ almost everywhere, we have
\[
\mathbbm 1_{A_n}\to \mathbbm 1_A
\qquad\text{almost everywhere.}
\]
Hence $
|A_n\triangle A|
=
\|\mathbbm 1_{A_n}-\mathbbm 1_A\|_1
\to0
$
by dominated convergence.
By the pointwise reverse triangle inequality in $\ell^2$,
\[
|S_2(u)-S_2(v)|\le S_2(u-v).
\]
Therefore
\[
\big|\|S_2(\mathbbm 1_{A_n})\|_1-\|S_2(\mathbbm 1_A)\|_1\big|
\le
\|S_2(\mathbbm 1_{A_n}-\mathbbm 1_A)\|_1.
\]
By Cauchy--Schwarz and orthogonality of martingale differences,
\[
\|S_2(h)\|_1\le \|S_2(h)\|_2=\|h-\mathbb Eh\|_2\le \|h\|_2.
\]
Applying this to $h=\mathbbm 1_{A_n}-\mathbbm 1_A$, we get
\[
\big|\|S_2(\mathbbm 1_{A_n})\|_1-\|S_2(\mathbbm 1_A)\|_1\big|
\le
|A_n\triangle A|^{1/2}\to0.
\]
Since $A_n\in\mathcal D$, Corollary~\ref{cor:dyadic-lowerbound-S2} gives
\[
\|S_2(\mathbbm 1_{A_n})\|_1\ge \tilde U(|A_n|,0).
\]
Thus it remains to note that $|A_n|\to |A|$ and that $p\mapsto \tilde U(p,0)$ is continuous, so passing to the limit yields
\[
\|S_2(\mathbbm 1_A)\|_1\ge \tilde U(|A|,0).
\]
\end{proof}

\begin{remark}\label{rem:possible-alpha-generalization-S2}
For every $0<\alpha\le 2$, the function
$
\phi_\alpha(t)=t^{\alpha/2}
$
is increasing and concave on $[0,\infty)$.
Hence the candidate function
$$
U_\alpha(p,q):=\mathbb E_p\big(q^2+\tau\big)^{\alpha/2}
$$
satisfies the same one-step inequality
$$
U_\alpha\!\left(p+a,\sqrt{q^2+a^2}\right)
+
U_\alpha\!\left(p-a,\sqrt{q^2+a^2}\right)
\ge 2U_\alpha(p,q),
$$
together with the boundary condition
$$
U_\alpha(0,q)=U_\alpha(1,q)=q^\alpha.
$$
Therefore Corollary~\ref{cor:endpoint-lemma} gives the Brownian lower bound in
$L^\alpha$ for the full range $0<\alpha\le 2$:
$$
\|S_2(\mathbbm 1_A)\|_\alpha^\alpha \ge \mathbb E_{|A|}\tau^{\alpha/2}
\qquad \text{for every } A\in\mathcal D.
$$
The lower bound extends from dyadic sets to arbitrary Borel sets
$A\subset[0,1)$ for every $0<\alpha\le2$. Indeed, if $A_n\in\mathcal D$
approximates $A$ as in Proposition~\ref{prop:borel-extension-S2}, then
$$
\|S_2(\mathbbm 1_{A_n})-S_2(\mathbbm 1_A)\|_\alpha
\le
\|S_2(\mathbbm 1_{A_n}-\mathbbm 1_A)\|_2 \to 0,
$$
since $\alpha\le2$, and $p\mapsto \mathbb E_p\tau^{\alpha/2}$ is continuous
on $[0,1]$. Therefore
$$
\|S_2(\mathbbm 1_A)\|_\alpha^\alpha \ge \mathbb E_{|A|}\tau^{\alpha/2}
\qquad \text{for every Borel set } A\subset[0,1).
$$
Moreover one could show that this bound is sharp by similar steps as in Lemma~\ref{lem:dyadic-mass-upper-S2} below.
\end{remark}

We next show the explicit formula and asymptotic behavior of $\tilde U(p,q)$.
\begin{lemma}\label{lem:Utilde-asymptotic-S2}
We have
\begin{equation}
\label{eq:U-exten-sin}
\tilde U(p,0)=\mathbb E_p\sqrt{\tau}
=
\frac{2\sqrt{2\pi}}{\pi^2}\sum_{k=0}^\infty
\frac{\sin((2k+1)\pi p)}{(2k+1)^2},
\end{equation}
and
\[
\mathbb E_p\sqrt{\tau}\asymp p^*\log_2\frac1{p^*},
\qquad p^*:=\min\{p,1-p\}.
\]
In particular, the map $p\mapsto \tilde U(p,0)$ is continuous on $[0,1]$.
\end{lemma}

\begin{proof}
Let
$
u(t,p):=\mathbb P_p(\tau>t).
$
Then
\[
\mathbb E_p\sqrt{\tau}
=
\frac12\int_0^\infty t^{-1/2}u(t,p)\,dt.
\]
The function $u$ solves the Dirichlet heat problem on $(0,1)$ with initial data $u(0,p)=1$.
(This follows from the correspondence between killed Brownian motion and the Dirichlet heat equation; see \cite[Theorem 7.44 and Theorem 7.45,]{MortersPeresBM}.)
Hence
\[
u(t,p)=\frac4\pi\sum_{k=0}^\infty \frac{\sin((2k+1)\pi p)}{2k+1}
\exp\!\left(-\frac{(2k+1)^2\pi^2}{2}t\right).
\]
Moreover,
\begin{small}
    \begin{align*}
\sum_{k=0}^\infty \frac4\pi \frac{1}{2k+1}
\int_0^\infty t^{-1/2}
\exp\!\left(-\frac{(2k+1)^2\pi^2}{2}t\right)\,dt
&=
\frac{2\sqrt{2\pi}}{\pi^2}
\sum_{k=0}^\infty \frac{1}{(2k+1)^2}
<\infty.
\end{align*}
\end{small}
Thus the series may be integrated termwise, which gives
\[
\mathbb E_p\sqrt{\tau}
=
\frac{2\sqrt{2\pi}}{\pi^2}\sum_{k=0}^\infty
\frac{\sin((2k+1)\pi p)}{(2k+1)^2}.
\]
This series is absolutely and uniformly convergent in $p$, and therefore $p\mapsto \tilde U(p,0)$ is continuous.
To obtain the asymptotic, set $x=\pi p$. Then
\begin{equation}
\label{eq:S(p)-rep}
\sum_{k=0}^\infty \frac{\sin((2k+1)x)}{(2k+1)^2}
=
\mathrm{Cl}_2(x)-\frac14\mathrm{Cl}_2(2x),
\end{equation}
where
\begin{equation}
\label{eq:cl2}
\mathrm{Cl}_2(x):=\sum_{n=1}^\infty \frac{\sin(nx)}{n^2}
=-\int_0^x \log\!\bigl(2\sin(t/2)\bigr)\,dt
\end{equation}
is the Clausen function of order two; see \cite{Lewin1981}.
Using the expansion
\[
\log\!\bigl(2\sin(t/2)\bigr)=\log t+O(t^2)
\qquad (t\downarrow0),
\]
we obtain
\[
\mathrm{Cl}_2(x)=x\log\frac1x+O(x).
\]
Therefore
\begin{equation}
\label{eq:S(p)-approx}
\sum_{k=0}^\infty \frac{\sin((2k+1)x)}{(2k+1)^2}
=
\frac{x}{2}\log\frac1x+O(x),
\end{equation}
and consequently
\[
\mathbb E_p\sqrt{\tau}
=
\sqrt{\frac2\pi}\,p\log\frac1p+O(p)
\qquad (p\downarrow0).
\]
By symmetry under $p\mapsto1-p$, the same estimate holds near $1$, which proves
\[
\mathbb E_p\sqrt{\tau}\asymp p^*\log_2\frac1{p^*},
\]
because when $p$ is separate from the endpoints of the interval $[0,1]$ then both functions are positive,  separated form zero and infinity. 
\end{proof}

\begin{lemma}\label{lem:Utilde-sharp-constants-S2}
Let
$
p^*:=\min\{p,1-p\}.
$ Then for every $p\in[0,1]$,
\[
\sqrt{\frac2\pi}\,\ln 2 \; p^*\log_2\frac1{p^*}
\;\le\;
\tilde U(p,0)
\;\le\;
\frac{4\sqrt2\,G}{\pi^{3/2}}\,p^*\log_2\frac1{p^*},
\]
where
$
G=\sum_{k=0}^\infty \frac{(-1)^k}{(2k+1)^2}
$
is Catalan's constant.
Both constants are sharp:
\[
\lim_{p^*\downarrow0}
\frac{\tilde U(p,0)}{p^*\log_2\frac1{p^*}}
=
\sqrt{\frac2\pi}\,\ln 2,
\]
and equality in the upper bound holds at $p=\frac12$.
\end{lemma}

\begin{proof}
By Lemma~\ref{lem:Utilde-asymptotic-S2}, we can write
\[
\tilde U(p,0)=\mathbb E_p\sqrt{\tau}
=
\frac{2\sqrt2}{\pi^{3/2}}\,S(p),
\quad
\text{where}
\quad 
S(p):=\sum_{k=0}^\infty \frac{\sin((2k+1)\pi p)}{(2k+1)^2}.
\]
By equation \eqref{eq:S(p)-rep} using $x=\pi p$, we have 
\[
S(p)=\mathrm{Cl}_2(\pi p)-\frac14 \mathrm{Cl}_2(2\pi p),
\]
where $\mathrm{Cl}_2(x)$ is given by \eqref{eq:cl2}.
Moreover, since 
$
\sin((2k+1)\pi(1-p))
=
\sin((2k+1)\pi p)
$
for every $k\ge 0$, we have
\[
S(1-p)=S(p), \qquad p\in[0,1].
\]
Hence it is enough to work on $[0,1/2]$, where $p^*=p$.

Differentiating the Clausen representation on $(0,1)$ gives
\[
S'(p)
=
-\pi\log\!\Bigl(2\sin\frac{\pi p}{2}\Bigr)
+\frac{\pi}{2}\log\!\bigl(2\sin(\pi p)\bigr).
\]
Using
$
\sin(\pi p)=2\sin\frac{\pi p}{2}\cos\frac{\pi p}{2},
$
we obtain
\[
S'(p)
=
\frac{\pi}{2}\log\!\Bigl(\cot\frac{\pi p}{2}\Bigr).
\]
Since $S(0)=0$, integration yields
\[
S(p)=\int_0^{\pi p}\frac12\log\!\Bigl(\cot\frac t2\Bigr)\,dt.
\]
Differentiating once more, and using
\[
\frac{d}{dx}\log\!\Bigl(\cot\frac x2\Bigr)=-\frac1{\sin x},
\]
we get
\[
S''(p)=-\frac{\pi^2}{2\sin(\pi p)},
\qquad 0<p<1.
\]

We split the proof into three steps.\\

\textbf{Step 1: Lower bound.}
For $0<p\le 1/2$,
$
L(p):=S(p)-\frac{\pi}{2}p\log\frac1p.
$
Then
\[
L''(p)
=
-\frac{\pi^2}{2\sin(\pi p)}+\frac{\pi}{2p}.
\]
Since $\sin(\pi p)\le \pi p$ on $[0,1/2]$, it follows that
\[
-\frac{\pi^2}{2\sin(\pi p)}\le -\frac{\pi}{2p}\implies L''(p)\le 0
\]
 and so
$L$ is concave on $(0,1/2]$.

Taking $ p\downarrow0$ and using \eqref{eq:S(p)-approx} we get
$
L(p)\to 0
$ as $ (p\downarrow0).
$
Also,
\[
L\Bigl(\frac12\Bigr)
=
S\Bigl(\frac12\Bigr)-\frac{\pi}{4}\log2
=
G-\frac{\pi}{4}\log2
>0.
\]
Therefore the continuous extension of $L$ to $[0,1/2]$ is concave, equals $0$ at $0$, and is positive at $1/2$.
Therefore
\[
L(p)\ge 0
\qquad \text{for all } p\in\Bigl[0,\frac12\Bigr].
\]
Thus 
by symmetry,
\[
S(p)\ge \frac{\pi}{2}p^*\log\frac1{p^*},
\qquad 0\le p\le 1.
\]
Multiplying by $\frac{2\sqrt2}{\pi^{3/2}}$ and using
$
\log\frac1{p^*}=(\ln2)\log_2\frac1{p^*},
$
we obtain
\[
\tilde U(p,0)\ge \sqrt{\frac2\pi}\,\ln2 \; p^*\log_2\frac1{p^*}.
\]

\textbf{Step 2: Upper bound.}
Set
$
A:=\frac{2G}{\ln2},
$
and for $0<p\le 1/2$, define
\[
V(p):=S(p)-A\,p\log\frac1p,
\quad 
\text{then}
\quad 
V''(p)
=
-\frac{\pi^2}{2\sin(\pi p)}+\frac{A}{p}.
\]
Now $\sin(\pi p)\ge 2p$ on $[0,1/2]$, so
\[
-\frac{\pi^2}{2\sin(\pi p)}
\ge
-\frac{\pi^2}{4p}
\quad 
\implies
\quad 
V''(p)\ge \frac{A-\pi^2/4}{p}.
\]
Since
$
A=\frac{2G}{\ln2}\approx 2.6434
$ and $
\frac{\pi^2}{4}\approx 2.4674,
$
we have $A>\pi^2/4$, and therefore
\[
V''(p)>0
\qquad\text{for all } p\in\Bigl(0,\frac12\Bigr].
\]
So $V$ is convex on $(0,1/2]$.

Taking $ p\downarrow0$ and using \eqref{eq:S(p)-approx} we get
$
V(p)\to 0
$ as $ (p\downarrow0).
$
Moreover,
\[
V\Bigl(\frac12\Bigr)
=
S\Bigl(\frac12\Bigr)-A\cdot \frac12\log2
=
G-\frac{2G}{\ln2}\cdot \frac12\ln2
=
0.
\]
Thus the continuous extension of $V$ to $[0,1/2]$ is convex and vanishes at both endpoints. Hence 
\[
V(p)\le 0
\qquad \text{for all } p\in\Bigl[0,\frac12\Bigr].
\]
Therefore
by symmetry,
\[
S(p)\le A\,p^*\log\frac1{p^*},
\qquad 0\le p\le 1.
\]
Multiplying by $\frac{2\sqrt2}{\pi^{3/2}}$ and using again
$
\log\frac1{p^*}=(\ln2)\log_2\frac1{p^*},
$
gives
\[
\tilde U(p,0)
\le
\frac{4\sqrt2\,G}{\pi^{3/2}}\,p^*\log_2\frac1{p^*}.
\]

\textbf{Step 3: Sharpness.}
The sharpness of the lower constant follows from the asymptotic already proved in Lemma~\ref{lem:Utilde-asymptotic-S2}:
\[
\tilde U(p,0)
=
\sqrt{\frac2\pi}\,p\log\frac1p+O(p)
\qquad (p\downarrow0).
\]
Dividing by $p\log_2(1/p)=\frac{1}{\ln2}p\log(1/p)$ yields
\[
\lim_{p\downarrow0}
\frac{\tilde U(p,0)}{p\log_2\frac1p}
=
\sqrt{\frac2\pi}\,\ln2.
\]
By symmetry, the same limit holds as $p\uparrow1$, so the lower constant cannot be improved.

For the upper bound, equality holds at $p=\frac12$, because
\[
S\Bigl(\frac12\Bigr)=G
\]
and therefore
\[
\tilde U\Bigl(\frac12,0\Bigr)
=
\frac{2\sqrt2}{\pi^{3/2}}\,G
=
\frac{4\sqrt2\,G}{\pi^{3/2}}\cdot \frac12
=
\frac{4\sqrt2\,G}{\pi^{3/2}}\,
\Bigl(\frac12\Bigr)\log_2 2.
\]
This proves sharpness of the upper constant.
\end{proof}

We now introduce the extremal profile in the Borel class:
\[
\mathcal{F}(p):=\inf\Bigl\{\|S_2(\mathbbm 1_A)\|_1:\ A\subset{[0,1)}\ \text{Borel},\ |A|=p\Bigr\},
\qquad p\in[0,1].
\]

\begin{lemma}\label{lem:F-continuity-S2}
For all $p,q\in[0,1]$,
\[
|\mathcal{F}(p)-\mathcal{F}(q)|\le |p-q|^{1/2}.
\]
In particular, $\mathcal{F}$ is continuous on $[0,1]$.
\end{lemma}

\begin{proof}
Fix $p,q\in[0,1]$ and $\varepsilon>0$. Choose a Borel set $A\subset{[0,1)}$ with $|A|=p$ and
\[
\|S_2(\mathbbm 1_A)\|_1\le \mathcal{F}(p)+\varepsilon.
\]
Assume first that $q\ge p$. Since Lebesgue measure is nonatomic, there exists a Borel set $E\subset{[0,1)}\setminus A$ with $|E|=q-p$. Set $B:=A\cup E$. Then $|B|=q$ and
$
|A\triangle B|=q-p.
$
So,
\[
\big|\|S_2(\mathbbm 1_A)\|_1-\|S_2(\mathbbm 1_B)\|_1\big|
\le |A\triangle B|^{1/2}
=
|p-q|^{1/2}.
\]
Hence
\[
\mathcal{F}(q)\le \|S_2(\mathbbm 1_B)\|_1\le \mathcal{F}(p)+\varepsilon+|p-q|^{1/2}.
\]
Letting $\varepsilon\downarrow0$ gives
\[
\mathcal{F}(q)\le \mathcal{F}(p)+|p-q|^{1/2}.
\]
Interchanging the roles of $p$ and $q$ proves the claim.
\end{proof}

It remains to prove the matching upper bound.
\begin{lemma}\label{lem:dyadic-mass-upper-S2}
Let $p_m=k_m/2^m\in(0,1)$ and set $a_m=2^{-m}$. Then there exists a Borel set
$A_m^\infty\subset{[0,1)}$ with $|A_m^\infty|=p_m$ such that
\[
\|S_2(\mathbbm 1_{A_m^\infty})\|_1
=
a_m\,\mathbb E\sqrt{\tau_m},
\]
where $\tau_m$ is the exit time of the dyadic lattice walk started at $p_m$. 
Moreover, if $p_m\to p\in(0,1)$, then
\[
a_m\,\mathbb E\sqrt{\tau_m}\to \mathbb E_p\sqrt{\tau}.
\]
\end{lemma}

\begin{proof}
Let $(r_k)_{k\ge1}$ be the Rademacher system on {$[0,1)$}, give by
\[
  r_k(x) := \operatorname{sgn}\bigl(\sin(2^k \pi x)\bigr),
  \qquad x\in{[0,1)}.
\]
Define
\[
X_0^{(m)}:=p_m,\qquad
X_n^{(m)}:=X_{n-1}^{(m)}+a_m r_n,
\qquad n\ge1.
\]
Let
\[
\tau_m:=\inf\{n\ge0:\ X_n^{(m)}\in\{0,1\}\},
\qquad
A_m^\infty:=\{X_{\tau_m}^{(m)}=1\}.
\]
Since
\[
A_m^\infty=\bigcup_{n\ge0}\bigl(\{\tau_m=n\}\cap\{X_n^{(m)}=1\}\bigr),
\]
the set $A_m^\infty$ is Borel.
The stopped process $X_{n\wedge\tau_m}^{(m)}$ is a bounded martingale with values in $[0,1]$. Since
\[
X_{\tau_m}^{(m)}\in\{0,1\}
\implies
X_{\tau_m}^{(m)}=\mathbbm 1_{A_m^\infty}.
\]
Hence optional stopping gives
\[
|A_m^\infty|
=
\mathbb E[X_{\tau_m}^{(m)}]
=
X_0^{(m)}
=
p_m.
\]
Again by optional stopping,
\[
\mathbb E(\mathbbm 1_{A_m^\infty}\mid\mathcal F_n)=X_{n\wedge\tau_m}^{(m)}.
\]
Hence
\[
d_n
=
X_{n\wedge\tau_m}^{(m)}-X_{(n-1)\wedge\tau_m}^{(m)}
=
a_m r_n\,\mathbbm{1}_{\{n\le\tau_m\}}.
\]
Therefore
\[
S_2(\mathbbm 1_{A_m^\infty})^2
=
\sum_{n\ge1}|d_n|^2
=
a_m^2\tau_m,
\]
and so
\[
\|S_2(\mathbbm 1_{A_m^\infty})\|_1
=
a_m\,\mathbb E\sqrt{\tau_m}.
\]

To prove convergence, let $Y_m$ be the continuous linear interpolation of the walk $(X_n^{(m)})_{n\ge0}$ on the time scale $a_m^2$, and define
\[
\theta_m:=\inf\{t\ge0:\, Y_m(t)\notin(0,1)\}.
\]
Then $\theta_m=a_m^2\tau_m$. By Donsker's invariance principle, the processes $Y_m$ converge weakly to Brownian motion started at $p$ \cite{Durrett2019}. By Skorokhod representation \cite{Billingsley1999}, we may see them on a common probability space so that
\[
Y_m\to B
\qquad\text{almost surely uniformly on compact time intervals}
\]

We claim that
\[
\theta_m\to\tau
\qquad\text{almost surely.}
\]
Fix a sample point $\omega$ for which the above uniform convergence holds and such that Brownian motion, after hitting $0$ or $1$, leaves the interval $[0,1]$ immediately; this is a standard almost sure property of Brownian motion. Let $\varepsilon>0$. By continuity of $B(\cdot,\omega)$ on $[0,\tau(\omega)-\varepsilon]$, there exists $\delta>0$ such that
\[
B_t(\omega)\in[\delta,1-\delta]
\qquad\text{for all }0\le t\le \tau(\omega)-\varepsilon.
\]
Hence for all sufficiently large $m$,
\[
Y_m(t,\omega)\in(0,1)
\qquad\text{for all }0\le t\le \tau(\omega)-\varepsilon,
\]
so
\[
\theta_m(\omega)\ge \tau(\omega)-\varepsilon.
\]
On the other hand, if $B_{\tau(\omega)}(\omega)=0$, then by the above boundary regularity there exists
$t_\varepsilon\in[\tau(\omega),\tau(\omega)+\varepsilon]$ such that
\[
B_{t_\varepsilon}(\omega)<0.
\]
Similarly, if $B_{\tau(\omega)}(\omega)=1$, there exists
$t_\varepsilon\in[\tau(\omega),\tau(\omega)+\varepsilon]$ such that
\[
B_{t_\varepsilon}(\omega)>1.
\]
Uniform convergence on $[0,t_\varepsilon]$ then implies that for all sufficiently large $m$,
\[
Y_m(t_\varepsilon,\omega)\notin(0,1),
\]
and therefore
\[
\theta_m(\omega)\le t_\varepsilon\le \tau(\omega)+\varepsilon.
\]
Since $\varepsilon>0$ is arbitrary, it follows that $\theta_m(\omega)\to\tau(\omega)$.
On the other hand, the process
\[
X_{n\wedge\tau_m}^{(m)}\bigl(1-X_{n\wedge\tau_m}^{(m)}\bigr)+a_m^2(n\wedge\tau_m)
\]
is a martingale, and therefore
\[
\mathbb E[\theta_m]=a_m^2\mathbb E[\tau_m]=p_m(1-p_m)\le \frac14.
\]
Thus $\{\sqrt{\theta_m}\}$ is bounded in $L^2$, hence uniformly integrable. Consequently,
\[
a_m\,\mathbb E\sqrt{\tau_m}
=
\mathbb E\sqrt{\theta_m}
\to
\mathbb E_p\sqrt{\tau}.
\]
\end{proof}

\begin{proof}[Proof of Theorem~\ref{mth01}]
By Proposition~\ref{prop:borel-extension-S2},
\[
\mathcal{F}(p)\ge \tilde U(p,0)=\mathbb E_p\sqrt{\tau}
\qquad\text{for all }p\in[0,1].
\]

Fix $p\in(0,1)$, and choose dyadic numbers $p_m=k_m/2^m\to p$. By Lemma~\ref{lem:dyadic-mass-upper-S2},
\[
\mathcal{F}(p_m)\le \|S_2(\mathbbm 1_{A_m^\infty})\|_1
=
a_m\,\mathbb E\sqrt{\tau_m}
\to
\mathbb E_p\sqrt{\tau}.
\]
On the other hand, Proposition~\ref{prop:borel-extension-S2} gives
\[
\mathcal{F}(p_m)\ge \mathbb E_{p_m}\sqrt{\tau},
\]
and Lemma~\ref{lem:Utilde-asymptotic-S2} implies
$\mathbb E_{p_m}\sqrt{\tau}\to \mathbb E_p\sqrt{\tau}.$
Hence
$
\mathcal{F}(p_m)\to \mathbb E_p\sqrt{\tau}.
$
Since $\mathcal{F}$ is continuous by Lemma~\ref{lem:F-continuity-S2},
\[
\mathcal{F}(p)=\lim_{m\to\infty}\mathcal{F}(p_m)=\mathbb E_p\sqrt{\tau}.
\]
The cases $p=0$ and $p=1$ are immediate, since both sides vanish. Therefore
\[
\inf\Bigl\{\|S_2(\mathbbm 1_A)\|_1:\ A\subset{[0,1)}\ \text{Borel},\ |A|=p\Bigr\}
=
\mathbb E_p\sqrt{\tau}.
\]
And so from Lemma~\ref{lem:Utilde-sharp-constants-S2} the two-sided estimate follows 
\[
\sqrt{\frac2\pi}\,\ln 2 \; p^*\log_2\frac1{p^*}
\;\le\;
\mathbb E_p\sqrt{\tau}
\;\le\;
\frac{4\sqrt2\,G}{\pi^{3/2}}\,p^*\log_2\frac1{p^*}.
\]

\end{proof}

\section{Proof of Theorem \ref{mth03}}

\subsection{Lower bound}
For all $\alpha \in (0,1)$ and all Borel sets $A\subset {[0,1)}$, we want to prove
$\|S_1(\mathbbm{1}_{A})\|_{\alpha} \geq |A|^*$. By Remark~\ref{supersol}, it is enough to find a continuous function $B:[0,1]\to[0,\infty)$ such that $B(0)=B(1)=0$ and $B$ satisfies the two-point inequality \eqref{twopp} for $(\alpha,\beta)=(\alpha,1)$. Then $\|S_1(\mathbbm 1_A)\|_\alpha \ge B(|A|)$ for every Borel set $A\subset{[0,1)}$. We take $B(x)=2x(1-x)$, so it only remains to verify the two-point inequality.

\begin{lemma}\label{teclem}
Let $B(x)=2x(1-x)$. Then $B$ satisfies the two-point inequality \eqref{twopp} with $(\alpha,\beta)=(\alpha,1)$, $\alpha \in (0,1)$, that is,
\begin{align*}
&\left( 2x(1-x)+\left|\frac{x-y}{2}\right|\right)^{\alpha}
+
\left( 2y(1-y)+\left|\frac{x-y}{2}\right|\right)^{\alpha}\\
&\hspace{3cm}\geq 2\left( (x+y)\left(1-\frac{x+y}{2}\right)\right)^{\alpha}
\end{align*}
for all $x,y \in [0,1]$.
\end{lemma}

\begin{proof}
Let $x,y\in[0,1]$. The cases $x=y=0$ and $x=y=1$ are immediate, so we may assume that at least one of the two quantities $2x(1-x)+|x-y|/2$ and $2y(1-y)+|x-y|/2$ is positive. Define
\begin{equation}\label{eq:falpha}
f(\alpha):=
\left[\frac{1}{2} \left( 2x(1-x)+\frac{|x-y|}{2}\right)^{\alpha} + \frac{1}{2}\left( 2y(1-y)+\frac{|x-y|}{2}\right)^{\alpha}\right]^{\frac{1}{\alpha}}.
\end{equation}
Consider the Bernoulli random variable $X$ given by
\[
X=
\begin{cases}
   2x(1-x)+\frac{|x-y|}{2} & \text{with probability } 1/2,\\[1mm]
   2y(1-y)+\frac{|x-y|}{2} & \text{with probability } 1/2.
\end{cases}
\]
Then $f(\alpha)= \left( \E(X^{\alpha})\right)^{1/\alpha}$. Since $X\ge 0$, the monotonicity of power means implies that $\alpha\mapsto f(\alpha)$ is nondecreasing on $(0,\infty)$, so $f(\alpha)\ge \lim_{t\to 0^+} f(t) = e^{\E\ln X}$ in the nontrivial cases under consideration.

Thus, to verify the inequality in Lemma~\ref{teclem}, it suffices to show that 
\begin{align*}
\E \ln X
&=
\frac{1}{2} \ln\left( 2x(1-x)+\frac{|x-y|}{2}\right)
+
\frac{1}{2}\ln \left( 2y(1-y)+\frac{|x-y|}{2}\right)\\
&\geq \ln \left( 2\frac{x+y}{2}\left( 1-\frac{x+y}{2}\right)\right).
\end{align*}
This inequality is equivalent to
\[
\left( 2x(1-x)+\frac{|x-y|}{2}\right)\left( 2y(1-y)+\frac{|x-y|}{2}\right)
\geq 4 \left( \frac{x+y}{2}\left( 1-\frac{x+y}{2}\right)\right)^2 .
\]
Without loss of generality, assume $x>y$. Expanding both sides gives
\[
-\frac{1}{4}\left(  4x^2-5x+y\right)(x+3y-4y^{2})\geq \frac{1}{4}(x+y-2)^2(x+y)^2,
\]
and moving all terms to the right yields $(x-y)(x-y+1)(x^2+x(6y-1)+(y-7)y)\leq 0$. It therefore suffices to show that $P(x):=x^2+x(6y-1)+(y-7)y\leq 0$.
Notice that $P(x)$ is convex in $x$, hence it is enough to check that $P$ is non-positive at the endpoints of the interval $[y,1]$.
We have
\begin{align*}
P(y) &= y^2+y(6y-1)+(y-7)y = 8y(y-1)\leq 0,\\
P(1) &= 1+6y-1+(y-7)y = y(y-1)\leq 0.
\end{align*}
This finishes the proof of Lemma~\ref{teclem}. 
\end{proof}

Now let $A\subset{[0,1)}$ be a Borel set. By Remark~\ref{supersol} in the continuous case and Lemma~\ref{teclem}, we obtain $\|S_1(\mathbbm{1}_{A})\|_{\alpha} \geq B(|A|)=2|A|(1-|A|)$. Since $2x(1-x)\geq \min\{x,1-x\}=x^{*}$ for all $x\in[0,1]$, it follows that $\|S_1(\mathbbm{1}_{A})\|_{\alpha} \geq |A|^*$. This finishes the proof of the lower bound in Theorem~\ref{mth03}.

\subsection{Sharpness}

By Theorem~\ref{mth02}, the case $\alpha=1$ is sharp. We now prove 
sharpness in Theorem~\ref{mth03} for every $\alpha\in(0,1)$.
For each $m\ge 1$, let
\[
A_m:=[0,2^{-m}).
\]
Then $A_m\in\mathcal D$ and
$|A_m|=2^{-m}\downarrow 0.$
We will show that there exists a constant $C_\alpha<\infty$ such that
\[
\|S_1(\mathbbm{1}_{A_m})\|_\alpha \le C_\alpha |A_m|
\qquad \text{for all } m\ge 1.
\]
This would prove the desired sharpness along the sequence $p_m=2^{-m}$.
Set
\[
f:=\mathbbm{1}_{A_m},
\qquad
f_n:=\mathbb E(f\mid \mathcal F_n),
\qquad
d_n:=f_n-f_{n-1},
\]
with $f_0=\mathbb E f = 2^{-m}$.
For $1\le n\le m$, the function $f_n$ is constant on dyadic intervals of length
$2^{-n}$, and since $A_m=[0,2^{-m})\subset[0,2^{-n})$, we have
\[
f_n=2^{\,n-m}\,\mathbbm{1}_{[0,2^{-n})}.
\]
For $n\ge m$, we have $f_n=f$, so in particular $d_n=0$ for $n\ge m+1$.
Now fix $1\le n\le m$. Since
\[
f_n=2^{\,n-m}\,\mathbbm{1}_{[0,2^{-n})}
\quad\text{and}\quad
f_{n-1}=2^{\,n-1-m}\,\mathbbm{1}_{[0,2^{-n+1})},
\]
for $ 1\le n\le m$ we obtain
\[
d_n
=
2^{\,n-1-m}
\Bigl(
\mathbbm{1}_{[0,2^{-n})}
-
\mathbbm{1}_{[2^{-n},\,2^{-n+1})}
\Bigr)
\implies
|d_n|
=
2^{\,n-1-m}\,\mathbbm{1}_{[0,\,2^{-n+1})},
\]
and hence
\[
S_1(\mathbbm{1}_{A_m})(t)
=
\sum_{n=1}^\infty |d_n(t)|
=
\sum_{n=1}^m 2^{\,n-1-m}\,\mathbbm{1}_{[0,\,2^{-n+1})}(t).
\]
We now compute this function explicitly. Let $1\le k\le m$ and suppose that
$t\in [2^{-k},\,2^{-k+1}).$
Then
\[
t<2^{-n+1}
\quad\Longleftrightarrow\quad
n\le k,
\]
so exactly the terms $n=1,\dots,k$ contribute to the sum. Thus
\[
S_1(\mathbbm{1}_{A_m})(t)
=
\sum_{n=1}^k 2^{\,n-1-m}
=
2^{-m}\sum_{n=1}^k 2^{n-1}
=
2^{-m}(2^k-1).
\]
If $t\in[0,2^{-m})$, then every term $n=1,\dots,m$
contributes, and so
\[
S_1(\mathbbm{1}_{A_m})(t)
=
\sum_{n=1}^m 2^{\,n-1-m}
=
2^{-m}(2^m-1)
=
1-2^{-m}.
\]
Since
$
[0,2^{-m}),\ [2^{-m},2^{-m+1}),\ \dots,\ {[2^{-1},1)}
$
form a partition of {$[0,1)$}, we get
\begin{align*}
\int_0^1 S_1(\mathbbm{1}_{A_m})^\alpha
&=
\sum_{k=1}^m
\int_{2^{-k}}^{2^{-k+1}}
\bigl(2^{-m}(2^k-1)\bigr)^\alpha\,dt
+
\int_0^{2^{-m}} (1-2^{-m})^\alpha\,dt \\
&=
\sum_{k=1}^m
2^{-k}\bigl(2^{-m}(2^k-1)\bigr)^\alpha
+
2^{-m}(1-2^{-m})^\alpha.
\end{align*}
Using $(2^k-1)^\alpha\le 2^{\alpha k}$ and $(1-2^{-m})^\alpha\le 1$, we obtain
\begin{align*}
\int_0^1 S_1(\mathbbm{1}_{A_m})^\alpha
&\le
2^{-\alpha m}\sum_{k=1}^m 2^{-k}2^{\alpha k}
+
2^{-m} =
2^{-\alpha m}\sum_{k=1}^m 2^{-(1-\alpha)k}
+
2^{-m}.
\end{align*}
Since $0<\alpha<1$, the geometric series
$
\sum_{k=1}^\infty 2^{-(1-\alpha)k}
$
converges. Also, because $\alpha<1$, we have $2^{-m}\le 2^{-\alpha m}$ for all
$m\ge1$. Therefore there exists a constant $C_\alpha<\infty$ such that
\[
\int_0^1 S_1(\mathbbm{1}_{A_m})^\alpha
\le
C_\alpha\,2^{-\alpha m}
=
C_\alpha\,|A_m|^\alpha.
\]
Thus, with $p_m:=|A_m|=2^{-m}\downarrow0$ and taking the power $1/\alpha$, we have constructed a sequence $A_m\in\mathcal D$ such that 
\[
\|S_1(\mathbbm{1}_{A_m})\|_\alpha
\le
C_\alpha^{1/\alpha}\,2^{-m}
=
C_\alpha^{1/\alpha}\,|A_m| = 
C_\alpha^{1/\alpha}\, p_m
.
\]
which proves the sharpness statement in Theorem~\ref{mth03}.
\qedhere

\section{Proof of Proposition \ref{twopp-max-func}}

Throughout this section assume
$
0<\alpha\le 1\le \beta$ and $
r:=\frac{\alpha}{\beta}\in(0,1].
$
Let $\mathcal F_{\alpha,\beta}$ be the family of all $f\in C([0,1])$ such that
$f\ge0$, $f(0)=f(1)=0$, and
\[
f\!\left(\frac{x+y}{2}\right)^\alpha
\le
\frac12\Bigl(f(x)^\beta+d^\beta\Bigr)^r
+
\frac12\Bigl(f(y)^\beta+d^\beta\Bigr)^r,
\qquad
d:=\left|\frac{x-y}{2}\right|,
\]
for all $x,y\in[0,1]$. Define
\[
B_{\alpha,\beta}(t):=\sup\{f(t):f\in\mathcal F_{\alpha,\beta}\},
\qquad t\in[0,1].
\]
We show that $B_{\alpha,\beta}$ is finite, continuous, and belongs to $\mathcal F_{\alpha,\beta}$.

Let $\phi(t):=\operatorname{dist}(t,\mathbb Z)$ and
\[
T_\alpha(t):=\sum_{n=0}^\infty 2^{-n}\phi(2^n t)^\alpha,
\qquad t\in[0,1].
\]

\begin{lemma}\label{lem:Takagi-growth}
The series defining $T_\alpha$ converges uniformly on $[0,1]$. Hence
$T_\alpha\in C([0,1])$, $T_\alpha(0)=T_\alpha(1)=0$, and there exists
$K_\alpha<\infty$ such that
\[
T_\alpha(t)\le
\begin{cases}
K_\alpha\, t^\alpha, & 0<\alpha<1,\\[2mm]
K_1\, t\log\!\bigl(\dfrac{2}{t}\bigr), & \alpha=1,
\end{cases}
\qquad t\in(0,1].
\]
\end{lemma}

\begin{proof}
Since $0\le \phi\le 1/2$, we have
$0\le 2^{-n}\phi(2^n t)^\alpha\le 2^{-n-\alpha}$ for all $t\in[0,1]$, so the
series converges uniformly; continuity and the endpoint values follow.

Fix $t\in(0,1]$ and let $N$ be maximal with $2^N t\le 1/2$ (possibly $N=-1$).
Then
\[
T_\alpha(t)
=
\sum_{n=0}^{N}2^{-n}\phi(2^n t)^\alpha
+
\sum_{n=N+1}^{\infty}2^{-n}\phi(2^n t)^\alpha.
\]
For $n\le N$, $\phi(2^n t)\le 2^n t$, so
\[
\sum_{n=0}^{N}2^{-n}\phi(2^n t)^\alpha
\le
t^\alpha\sum_{n=0}^{N}2^{(\alpha-1)n}.
\]
For $n\ge N+1$, $\phi(2^n t)\le 1/2$, hence
\[
\sum_{n=N+1}^{\infty}2^{-n}\phi(2^n t)^\alpha
\le
2^{-\alpha}\sum_{n=N+1}^{\infty}2^{-n}
=
2^{-\alpha}2^{-N}.
\]
Since $2^{N+1}t>1/2$, we have $2^{-N}<4t$. If $0<\alpha<1$, then
$t\le t^\alpha$ on $(0,1]$, so both pieces are $O(t^\alpha)$. If $\alpha=1$,
the first sum is $(N+1)t$ and $(N+1)\lesssim \log(2/t)$, while the tail is
$O(t)\le O(t\log(2/t))$. This proves the claim.
\end{proof}

\begin{lemma}\label{lem:reduce-midconvex}
Let $f\in\mathcal F_{\alpha,\beta}$ and set $A:=f^\alpha$. Then
\begin{equation}\label{eq:approx-midconvex}
A\!\left(\frac{x+y}{2}\right)
\le
\frac{A(x)+A(y)}{2}
+
\left|\frac{x-y}{2}\right|^\alpha,
\qquad x,y\in[0,1].
\end{equation}
\end{lemma}

\begin{proof}
Let $d:=|(x-y)/2|$. Since $r\in(0,1]$, $(u+v)^r\le u^r+v^r$ for all $u,v\ge0$.
Applying this with $u=f(x)^\beta$ and $v=d^\beta$, and similarly with $x$
replaced by $y$, gives
\[
\bigl(f(x)^\beta+d^\beta\bigr)^r\le f(x)^\alpha+d^\alpha=A(x)+d^\alpha,
\]
\[
\bigl(f(y)^\beta+d^\beta\bigr)^r\le f(y)^\alpha+d^\alpha=A(y)+d^\alpha.
\]
Substituting into the two-point inequality yields \eqref{eq:approx-midconvex}.
\end{proof}

Define the doubling map $D:[0,1]\to[0,1]$ by
\[
D(t)=
\begin{cases}
2t, & 0\le t<1/2,\\
2t-1, & 1/2\le t\le 1.
\end{cases}
\]
Then $D(0)=0$, $D(1)=1$, and for every $t\in[0,1]$,
\[
A(t)\le \frac12 A(D(t)) + C\,\phi(t)^\alpha
\]
whenever $A(0)=A(1)=0$ and \eqref{eq:approx-midconvex} holds with constant $C$.

\begin{lemma}\label{lem:Takagi-bound-A}
Let $A\in C([0,1])$ be nonnegative, with $A(0)=A(1)=0$, and assume
\[
A\!\left(\frac{x+y}{2}\right)
\le
\frac{A(x)+A(y)}{2}
+
C\left|\frac{x-y}{2}\right|^\alpha,
\qquad x,y\in[0,1].
\]
Then
\begin{equation}\label{eq:Takagi-bound}
A(t)\le C\,T_\alpha(t),
\qquad t\in[0,1].
\end{equation}
\end{lemma}

\begin{proof}
Fix $t\in[0,1]$. Applying the midpoint inequality to $(0,D(t))$ if $t\le1/2$
and to $(D(t),1)$ if $t\ge1/2$ gives
\[
A(t)\le \tfrac12 A(D(t))+C\,\phi(t)^\alpha.
\]
Iterating, for every $N\ge1$,
\[
A(t)\le 2^{-N}A(D^N(t))+C\sum_{n=0}^{N-1}2^{-n}\phi(D^n(t))^\alpha.
\]
Since $A$ is bounded on $[0,1]$, the first term tends to $0$ as $N\to\infty$.
Also $\phi(D^n(t))=\phi(2^n t)$ for all $n\ge0$, because $D^n(t)\equiv 2^n t
\pmod 1$ and $\phi$ is the distance to the integers. Letting $N\to\infty$
yields \eqref{eq:Takagi-bound}.
\end{proof}

\begin{lemma}\label{lem:chord-est}
Let $A$ satisfy the assumptions of Lemma~\ref{lem:Takagi-bound-A}.
Then for every $0\le a<b\le1$ and $t\in[0,1]$,
\begin{equation}\label{eq:chord-est}
A((1-t)a+tb)
\le
(1-t)A(a)+tA(b)+(b-a)^\alpha T_\alpha(t).
\end{equation}
\end{lemma}

\begin{proof}
Define
\[
H(t):=A((1-t)a+tb)-\bigl((1-t)A(a)+tA(b)\bigr).
\]
Then $H(0)=H(1)=0$. If $u=(1-s)a+sb$ and $v=(1-t)a+tb$, then
\[
\tfrac{u+v}{2}=\Bigl(1-\tfrac{s+t}{2}\Bigr)a+\tfrac{s+t}{2}b,
\qquad
\left|\tfrac{u-v}{2}\right|=(b-a)\left|\tfrac{s-t}{2}\right|.
\]
Applying the midpoint inequality for $A$ gives
\[
H\!\left(\frac{s+t}{2}\right)
\le
\frac{H(s)+H(t)}{2}
+
(b-a)^\alpha\left|\frac{s-t}{2}\right|^\alpha.
\]
Let $H_+:=\max(H,0)$. Since $u\mapsto u_+$ is convex and
$(m+E)_+\le m_+ + E$ for $E\ge0$, we obtain
\[
H_+\!\left(\frac{s+t}{2}\right)
\le
\frac{H_+(s)+H_+(t)}{2}
+
(b-a)^\alpha\left|\frac{s-t}{2}\right|^\alpha.
\]
Applying Lemma~\ref{lem:Takagi-bound-A} to $H_+$ yields
\[
H_+(t)\le (b-a)^\alpha T_\alpha(t).
\]
Since $H\le H_+$, this proves \eqref{eq:chord-est}.
\end{proof}

\begin{proof}[Proof of Proposition~\ref{twopp-max-func}]
We divide the proof into three steps.

\smallskip
\noindent\textit{Step 1: $B_{\alpha,\beta}$ is finite.}
Fix $f\in\mathcal F_{\alpha,\beta}$ and set $A:=f^\alpha$. By
Lemma~\ref{lem:reduce-midconvex}, $A$ satisfies \eqref{eq:approx-midconvex}
with $A(0)=A(1)=0$, so Lemma~\ref{lem:Takagi-bound-A} gives
\[
A(t)\le T_\alpha(t),\qquad t\in[0,1].
\]
Hence $f(t)\le T_\alpha(t)^{1/\alpha}$ for all $t$, and therefore
$B_{\alpha,\beta}(t)<\infty$.

\smallskip
\noindent\textit{Step 2: $B_{\alpha,\beta}$ is continuous.}
Let $\mathcal A:=\{f^\alpha:f\in\mathcal F_{\alpha,\beta}\}$. By
Lemma~\ref{lem:reduce-midconvex}, each $A\in\mathcal A$ satisfies
\eqref{eq:approx-midconvex}. Fix $0\le x<y\le1$ and set $\delta:=y-x$.
Applying Lemma~\ref{lem:chord-est} on $[x,1]$ with $t=\delta/(1-x)$ and using
$A(1)=0$ gives
\[
A(y)-A(x)\le (1-x)^\alpha T_\alpha\!\left(\frac{\delta}{1-x}\right).
\]
Applying Lemma~\ref{lem:chord-est} on $[0,y]$ with $t=x/y$ and using $A(0)=0$
gives
\[
A(x)-A(y)\le y^\alpha T_\alpha\!\left(\frac{\delta}{y}\right).
\]
By Lemma~\ref{lem:Takagi-growth}, both right-hand sides are bounded by
\[
\omega(\delta):=
\begin{cases}
K_\alpha\,\delta^\alpha, & 0<\alpha<1,\\[2mm]
K_1\,\delta\log\!\bigl(\dfrac{2}{\delta}\bigr), & \alpha=1,\ 0<\delta\le1,\\[2mm]
0, & \delta=0.
\end{cases}
\]
Hence every $A\in\mathcal A$ satisfies
\[
|A(x)-A(y)|\le \omega(|x-y|),
\qquad x,y\in[0,1].
\]
Therefore
$
A_{\alpha,\beta}(t):=\sup_{A\in\mathcal A}A(t)
$
is continuous, since
\[
|A_{\alpha,\beta}(x)-A_{\alpha,\beta}(y)|
\le
\sup_{A\in\mathcal A}|A(x)-A(y)|
\le
\omega(|x-y|).
\]
Finally, we get that $B_{\alpha,\beta}=A_{\alpha,\beta}^{1/\alpha}$ is continuous from
\[
A_{\alpha,\beta}(t)
=
\sup_{f\in\mathcal F_{\alpha,\beta}} f(t)^\alpha
=
\left(\sup_{f\in\mathcal F_{\alpha,\beta}}f(t)\right)^\alpha
=
B_{\alpha,\beta}(t)^\alpha,
\]

\smallskip
\noindent\textit{Step 3: $B_{\alpha,\beta}\in\mathcal F_{\alpha,\beta}$.}
Nonnegativity and the boundary conditions are inherited from the supremum. Fix
$x,y\in[0,1]$ and write $d:=|(x-y)/2|$. Since
\[
F_d(s):=(s^\beta+d^\beta)^r
\]
is increasing on $[0,\infty)$, every $f\in\mathcal F_{\alpha,\beta}$ satisfies
\[
f\!\left(\frac{x+y}{2}\right)^\alpha
\le
\frac12 F_d(f(x))+\frac12 F_d(f(y))
\le
\frac12 F_d(B_{\alpha,\beta}(x))+\frac12 F_d(B_{\alpha,\beta}(y)).
\]
Taking the supremum over $f$ yields
\[
B_{\alpha,\beta}\!\left(\frac{x+y}{2}\right)^\alpha
\le
\frac12\Bigl(B_{\alpha,\beta}(x)^\beta+d^\beta\Bigr)^r
+
\frac12\Bigl(B_{\alpha,\beta}(y)^\beta+d^\beta\Bigr)^r.
\]
Thus $B_{\alpha,\beta}\in\mathcal F_{\alpha,\beta}$, and pointwise maximality is
built into the definition.
\end{proof}

\newpage

\appendix
\section{Appendix: Proof of Theorem \ref{thm-bn}}
In this appendix we give a proof of the main
properties of $B_{1,1}$ in Theorem \ref{thm-bn}, with some results from \cite{hart} and \cite{BIM} for bounding purposes.
The point of this argument is to construct $B_{1,1}$ explicitly on dyadic rationals through the approximants $B_n$, identify the limit $P$ as the maximal function satisfying the two-point inequality in the case $(\alpha,\beta)=(1,1)$, and derive its basic structural properties. In particular, this gives a direct proof of the symmetry, scaling relation, lower bound, and dyadic modulus of continuity that were announced in the main text.

\begin{lemma}\label{lm:F1}
    For any integer $k$,  $0\leq k\leq 2^n$, we have 
    \[
    F(2k)=2F(k)+k,
    \]
    where $F$ is defined as in \eqref{eq:F_def}. 
\end{lemma}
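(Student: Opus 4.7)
The plan is to unwind the definition of $F$ and split the summation according to parity. By construction $F(2k) = \sum_{j=0}^{2k-1} s(j)$ (reading \eqref{eq:F} with the summand $s(j)$, cf.\ the remark below), so I would begin by partitioning the index set $\{0,1,\ldots,2k-1\}$ into its even and odd members, obtaining
\[
F(2k) \;=\; \sum_{j=0}^{k-1} s(2j) \;+\; \sum_{j=0}^{k-1} s(2j+1).
\]

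Next, I would invoke two elementary identities about the binary digit-sum function: $s(2j) = s(j)$, since appending a trailing $0$ in base $2$ leaves the digit sum unchanged, and $s(2j+1) = s(j) + 1$, since a trailing $1$ contributes exactly one additional $1$. Substituting these into the split sum gives
\[
F(2k) \;=\; \sum_{j=0}^{k-1} s(j) \;+\; \sum_{j=0}^{k-1}\bigl(s(j)+1\bigr) \;=\; 2\sum_{j=0}^{k-1} s(j) + k \;=\; 2F(k) + k,
\]
which is the desired functional equation.

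This is essentially a one-line computation, so there is no genuine obstacle. The only real subtlety is interpretive: the formula \eqref{eq:F} as printed has $s(k)$ rather than $s(j)$ in the summand, which is evidently a typographical slip, since under the literal reading one would have $F(k)=k\,s(k)$ and the identity $F(2k)=2F(k)+k$ would fail (take $k=3$, where $s(3)=2$ and $s(6)=2$). Interpreting $F$ as $F(k)=\sum_{j=0}^{k-1}s(j)$, which is the standard binary-digit-sum counting function and is consistent with the range $[0,\infty)$ claimed for $F$ as well as with the formula \eqref{eq:bn} for $B_n$, the lemma follows immediately from the two digit-sum identities above.
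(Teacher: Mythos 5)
Your proof is correct, and it takes a genuinely different route from the paper's. The paper invokes an external result (Remark 2.5 of Hart's note on edges of the $n$-cube) which says that $F$ satisfies the recursion $F(l)=\max_{0\leq m\leq l/2}\bigl(F(m)+F(l-m)+m\bigr)$ with the maximum attained at $m=\lfloor l/2\rfloor$; substituting $l=2k$ then yields the identity. You instead compute directly from the definition $F(k)=\sum_{j=0}^{k-1}s(j)$, splitting the range $\{0,\ldots,2k-1\}$ by parity and using the digit-sum identities $s(2j)=s(j)$ and $s(2j+1)=s(j)+1$. Your argument is more elementary and self-contained, avoiding the citation entirely; the paper's choice has the advantage that the same Hart formula is reused immediately afterward in Lemma~\ref{lm:F2}, so the citation does double duty. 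You are also right that the displayed definition \eqref{eq:F} has a typo (the summand should read $s(j)$, not $s(k)$), and your sanity check with $k=3$ is a clean way to confirm that the literal reading cannot be intended.
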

\begin{proof}
    From {\cite[Remark 2.5]{hart}}, we know that $F$ satisfies $F(1)=0$, and 
    \begin{equation}
    \label{eq:Fmax}
        F(l) = \max_{0\leq m\leq l/2} (F(m)+F(l-m)+m),
    \end{equation}
    where the maximum in the right hand side of \eqref{eq:Fmax} is achieved at $m=\lfloor l/2\rfloor$. Choosing $l=2k$ proves the lemma. 

\end{proof}

\begin{lemma}\label{lm:F2}
For nonnegative integers $a$ and $b$, it holds
\[
F(a+b) -\min(a,b) \geq F(a) + F(b) .
\]
\end{lemma}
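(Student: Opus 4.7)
My plan is to derive the inequality as a one-line consequence of the max characterization \eqref{eq:Fmax} that was invoked in the proof of Lemma~5.1. Recall that
\[
F(l) \;=\; \max_{0\leq m\leq l/2}\bigl(F(m)+F(l-m)+m\bigr),
\]
so in particular $F(l)\geq F(m)+F(l-m)+m$ for \emph{every} admissible $m$.

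Without loss of generality I would assume $a\leq b$, so that $\min(a,b)=a$. Setting $l=a+b$, the choice $m=a$ is admissible because $a\leq(a+b)/2$ exactly when $a\leq b$. Plugging into the above inequality gives
\[
F(a+b)\;\geq\; F(a)+F(b)+a\;=\;F(a)+F(b)+\min(a,b),
\]
which is precisely the claim after rearranging. The symmetric case $b\leq a$ is handled by swapping the roles of $a$ and $b$ (or one can simply note $F$'s symmetry under the split).

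There is essentially no obstacle here: the entire content of the lemma is packaged inside the max formula, which is a known property of the Harper/edge-isoperimetric extremal function and was already cited from \cite{hart} in Lemma~\ref{lm:F1}. If one wanted a self-contained derivation not relying on \eqref{eq:Fmax}, one could instead argue inductively on $\min(a,b)$: reduce to the case where both $a$ and $b$ are even (using $F(2k)=2F(k)+k$ from Lemma~\ref{lm:F1}) or handle parities by hand, but this would just re-prove \eqref{eq:Fmax} and is unnecessary given that the statement is already available.
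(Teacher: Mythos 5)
Your argument is correct and is essentially identical to the paper's own proof: both invoke the max characterization $F(l)=\max_{0\leq m\leq l/2}(F(m)+F(l-m)+m)$ from \cite{hart}, take $l=a+b$, and choose $m=\min(a,b)$ (you write it with $a\leq b$ and $m=a$, the paper with $b\leq a$ and $m=b$, which is the same thing up to relabeling).
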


\begin{proof}
Assume without loss of generality that $a\le b$, so $\min(a,b)=a$.
Then $2a\le a+b$, hence $a\le \lfloor (a+b)/2\rfloor$.
Applying {\cite[Proposition 2.1]{hart}} with $\ell=a+b$ and choosing $m=a$, we get
\[
F(a+b)=\max_{0\le m\le \lfloor (a+b)/2\rfloor}\bigl(F(m)+F(a+b-m)+m\bigr)
\ge F(a)+F(b)+a.
\]
Rearranging yields $F(a+b)-a\ge F(a)+F(b)$, which is the claim.
\end{proof}

To prove (i), we show that for all $x \in D= \cup_{n \geq 0} D_{n}$ the limit  $\lim_{n \to \infty}B_{n}(x)$, where $B_n$ is defined in \eqref{eq:bn}, exists and is denoted by $P(x)$. 
i.e.
\[
\lim_{n\to\infty} B_n(x) 
= \lim_{n\to\infty} 
\left(   nx - \frac{1}{2^{n-1}} F(x2^n)
   \right) = P(x).
\]
Recall that $B_n$ is defined on ${D}_n$, and $D_n\subseteq D_{n+1}$. It suffices to show that, $B_{n+1}\mid_{D_n}= B_n$  for all $n\geq 1$.
Let $x\in D_n$, then $x=\frac{k}{2^n}$ for some $0\leq k \leq 2^n$.
Using Lemma \ref{lm:F1}, we have
\begin{align*}
    B_{n+1}(x) 
    &=  (n+1)\frac{k}{2^n} - \frac{1}{2^n}F\left( 2k \right)= (n+1)\frac{k}{2^n} - \frac{1}{2^n}\left( k + 2F(k) \right)\\
    & = \frac{n}{2^n}k + \frac{k}{2^n}-\frac{k}{2^n} - \frac{1}{2^{n-1}} F(k)= B_n(x).
\end{align*}
Hence the limit exists.\\

To prove (ii), we first show that the limit function $P: D \mapsto [0,\infty)$ defined on $D$ indeed satisfies $P(0)=P(1)=0$, and the two-point inequality \eqref{twopp} with $(\alpha, \beta)=(1,1)$, that is 
\begin{align}\label{ori}
\left| \frac{x-y}{2}\right| + \frac{P(x)+P(y)}{2} \geq P\left( \frac{x+y}{2} \right) \quad \text{for all} \quad x,y \in D. 
\end{align}
Note that indeed $P(0)=B_{1}(0)=0$, and $P(1)=B_{1}(1)=0$, so we show the two-point inequality.
Pick arbitrary points $x,y \in D$. Then there exists $n\geq 1$ such that $x,y,\frac{x+y}{2}\in D_{n+1}$.
It suffices to show
\begin{equation}
\label{eq:twoppbn}
    \left| \frac{x-y}{2}\right| + \frac{B_{n+1}(x)+B_{n+1}(y)}{2} \geq B_{n+1}\left( \frac{x+y}{2} \right).
\end{equation}
Without loss of generality, assume $x\geq y$. Then there exist $k$ and $l$ such that $0\leq l \leq k \leq 2^{n+1}$,
\[
x=\frac{k}{2^{n+1}},\quad y=\frac{l}{2^{n+1}}, \quad \text{and} \quad k+l \quad \text{is even}.
\]
Rewriting \eqref{eq:twoppbn} using the definition \eqref{eq:bn} for $B_n$, we get
\[
\frac{k-l}{2^{n+2}}+\frac{(n+1)(k+l)}{2^{n+2}} - \frac{F(k)+F(l)}{2^{n+1}}\geq \frac{k+l}{2^{n+2}}(n+1) - \frac{F\left(\frac{k+l}{2}\right)}{2^n},
\]
the above line is 
equivalent to
\[
\frac{k-l}{2} +2F\left(\frac{k+l}{2}\right) \geq F(k)+F(l).
\]
Using the identity
$F(2k) = k + 2 F(k)$ from Lemma \eqref{lm:F1}, the above inequality simplifies to 
\[
F(k+l) \geq F(k)+F(l)+l,
\]
which is true by Lemma \ref{lm:F2}.

\vskip0.5cm

We are left to show that $P$ is the pointwise maximal function defined on dyadic numbers $D \subset [0,1]$, satisfying the boundary condition $P(0)=P(1)=0$ and the two-point inequality (\ref{ori}).

\vskip0.5cm

For $n \geq 1$, let $\{0,1\}^n$ be the hypercube of dimension $n$. The vertices $x, y \in\{0,1\}^n$ are joined by an edge if $x=\left(x_1, \ldots, x_n\right)$ and $y=\left(y_1, \ldots, y_n\right)$ differ by exactly one coordinate. Denote such an edge by $(x, y)$. 
Let $A\subset \{0,1\}^n$ and $A^c:=\{0,1\}^n \backslash A$, we call $y\in A^c$ a neighbor of $x\in A$ if it joins $x$ by an edge.
Define the edge boundary of $A$ by $\nabla A:=\left\{(x, y): x \in A, y \in A^c\right\}$, and the function 
\[
h_A (x) =\begin{cases}
    0&{\rm{if}}\;\; x \in A^c,\\
    \text{number of neighbors of $x$} & {\rm{if}}\;\; x\in A.
\end{cases}
\]

The quantity $\mathbb{E} h_A(x)$, where $x\sim \mathrm{unif}(\{0,1\}^{n})$,  has the property $\mathbb{E} h_A(x)=\mathbb{E} h_{A^c}(x)$, and we have $\mathbb{E} h_A(x)=\frac{|\nabla A|}{2^n}$.
It is known (see \cite{BIM}) that
\[
\min_{A\subset \{0,1\}^n,\, |A|=t} \mathbb{E} h_A(x)=P(t)
\quad \text{for all} \quad t \in D_{n}.
\]
Here $|A| = \mathbb{P}(x \in A)$ is the uniform probability measure. 
It suffices to show that 
$\E h_A(x) \geq Q(|A|)$
for any  $Q:{D}\to \mathbb{R}$ that satisfies $Q(0)=Q(1)=0$ and \eqref{ori}.

Let $f = \1_A$ be the indicator function of a set $A\subseteq \{0,1\}^n$, for $x\in \{0,1\}^n$, denote $x^i= (x_1,\cdots x_{i-1},1-x_i,x_{i+1},\cdots x_n)$ and denote $(\cdot)_{+} = \max(\cdot,0)$.
We prove the inequality 
\begin{equation}\label{eq:eha}
\E h_A 
= \E \sum_{i=1}^{n} (f(x)-f(x^i))_{+}
\geq Q(|A|)
\end{equation}
 by induction on $n$.  
For $n=1$, we have $\E h_A = \frac{|\nabla A|}{2}$. We verify that for all $A\in \{0,1\}$, \eqref{eq:eha} holds. 
Let $A = \emptyset$ (the case $A=\{0,1\}$ is similar), then $\E h_A = 0 = Q(0)$.
Let $A= \{1\}$  (the case $A=\{0\}$ is similar), then $\E h_A = \frac{1}{2}$, 
Since $Q$ satisfies \eqref{ori}, and $|A| = \frac{1}{2}$, we have 
\[
Q\left(\frac{1}{2}\right) = Q\left( \frac{1+0}{2} \right) \leq \frac{1}{2} + \frac{Q(1)+Q(0)}{2} = \frac{1}{2}= \E h_A.
\]

Assume now that the statement holds for $n-1$. For $\bar x=(x_2,\dots,x_n)\in\{0,1\}^{n-1}$, define
\[
A_i:=\{\bar x\in\{0,1\}^{n-1}:(i,\bar x)\in A\},\qquad i=0,1,
\]
and write
\[
f_i(\bar x):=\mathbbm{1}_{A_i}(\bar x),\qquad i=0,1.
\]
Here and below, $|A_i|=\mathbb P(\bar x\in A_i)$ denotes the uniform probability measure on $\{0,1\}^{n-1}$, so
\[
|A|=\frac{|A_1|+|A_0|}{2}.
\]

For $k=1,\dots,n-1$, let $\bar x^{\,k}$ denote the point obtained from $\bar x$ by flipping its $k$-th coordinate. Also define
\[
h_{A_i}(\bar x):=\sum_{k=1}^{n-1}\bigl(f_i(\bar x)-f_i(\bar x^{\,k})\bigr)_+,
\qquad i=0,1.
\]
Since
\[
h_A(x)=\sum_{j=1}^n \bigl(f(x)-f(x^j)\bigr)_+,
\]
we split the sum into the first coordinate and the remaining coordinates:
\begin{align*}
\E h_A
&=
\E_{\bar x}\E_{x_1}\bigl(f(x)-f(x^1)\bigr)_+
+
\E_{\bar x}\E_{x_1}\sum_{j=2}^n \bigl(f(x)-f(x^j)\bigr)_+.
\end{align*}
For the first term, using $(a-b)_+ + (b-a)_+ = |a-b|$, we get
\begin{align*}
\E_{\bar x}\E_{x_1}\bigl(f(x)-f(x^1)\bigr)_+
&=
\frac12 \E_{\bar x}\Bigl(\bigl(f_1(\bar x)-f_0(\bar x)\bigr)_+ + \bigl(f_0(\bar x)-f_1(\bar x)\bigr)_+\Bigr) \\
&=
\frac12 \E_{\bar x}\bigl|f_1(\bar x)-f_0(\bar x)\bigr|.
\end{align*}
For the remaining coordinates, we have
\begin{align*}
\E_{\bar x}\E_{x_1}\sum_{j=2}^n \bigl(f(x)-f(x^j)\bigr)_+
&=
\frac12 \E_{\bar x}\sum_{k=1}^{n-1}\bigl(f_1(\bar x)-f_1(\bar x^{\,k})\bigr)_+ \\
&\qquad+
\frac12 \E_{\bar x}\sum_{k=1}^{n-1}\bigl(f_0(\bar x)-f_0(\bar x^{\,k})\bigr)_+ \\
&=
\frac12 \E h_{A_1}+\frac12 \E h_{A_0}.
\end{align*}
Therefore
\[
\E h_A
=
\frac12 \E\bigl|f_1-f_0\bigr|
+\frac12 \E h_{A_1}
+\frac12 \E h_{A_0}.
\]
By Jensen's inequality,
\[
\frac12 \E\bigl|f_1-f_0\bigr|
\ge
\frac12 \bigl|\E f_1-\E f_0\bigr|
=
\frac12 \bigl||A_1|-|A_0|\bigr|.
\]
By the induction hypothesis,
\[
\E h_{A_1}\ge Q(|A_1|),\qquad \E h_{A_0}\ge Q(|A_0|).
\]
Hence
\[
\E h_A
\ge
\frac12 \bigl||A_1|-|A_0|\bigr|
+\frac12\bigl(Q(|A_1|)+Q(|A_0|)\bigr).
\]
Since $Q$ satisfies \eqref{ori},
\[
Q\!\left(\frac{|A_1|+|A_0|}{2}\right)
\le
\frac12\bigl(Q(|A_1|)+Q(|A_0|)\bigr)
+
\frac12\bigl||A_1|-|A_0|\bigr|.
\]
Therefore
\[
\E h_A\ge Q\!\left(\frac{|A_1|+|A_0|}{2}\right)=Q(|A|).
\]
This completes the induction and proves~(ii).\\

\begin{lemma}\label{lem:dyadic-symmetry}
For all $x\in D$, we have the following dyadic symmetry
\[
P(x)=P(1-x).
\]
\end{lemma}

\begin{proof}
Define
$L(x):=P(1-x)$ for $ x\in D.$   
Then
\[
L(0)=P(1)=0,\qquad L(1)=P(0)=0.
\]
Also, if $x,y\in D$, then
\begin{align*}
L\!\left(\frac{x+y}{2}\right)
&=P\!\left(1-\frac{x+y}{2}\right)
 =P\!\left(\frac{(1-x)+(1-y)}{2}\right)\\
&\le \frac{P(1-x)+P(1-y)}{2}+\frac{|(1-x)-(1-y)|}{2}\\
&= \frac{L(x)+L(y)}{2}+\frac{|x-y|}{2}.
\end{align*}
Hence $L$ satisfies the endpoint condition and \eqref{ori} on $D$.
By part~(ii), since $P$ is pointwise maximal on $D$, we get
\[
L(x)\le P(x),\qquad x\in D.
\]
Applying this with $1-x$ in place of $x$ yields
\[
P(x)=L(1-x)\le P(1-x),
\]
while applying it at $x$ gives
\[
P(1-x)=L(x)\le P(x).
\]
Therefore $
P(x)=P(1-x)$ for all $ x\in D.
$
\end{proof}

To prove (iii), we equivalently prove that for all $ x, y \in D $ such that $ |x-y| < \frac{1}{2^m} $ for some $ m \geq 1 $, we have $ |P(x) - P(y)| \lesssim \frac{m}{2^m} $.
The proof of the claim is given in the next section.\\

\begin{lemma}\label{lem:cont_ext}
The function $P$ defined on $D$, extends to a uniquely defined continuous function on $[0,1]$.
\end{lemma}

\begin{proof}
    
By part~(iii), for $x,y\in D$ with $|x-y|\le 2^{-m}$, we have
\[
|P(x)-P(y)|\le C\frac{m}{2^m}.
\]
Equivalently, if $0<|x-y|\le h\le 1/2$, then choosing $m\in\mathbb N$ such that
\[
2^{-(m+1)}<h\le 2^{-m},
\]
we obtain
\[
|P(x)-P(y)|\le C\frac{m}{2^m}
\lesssim h\log_2\!\Big(\frac1h\Big).
\]
Since $h\log_2(1/h)\to 0$ as $h\to 0^+$, it follows that $P$ is uniformly
continuous on $D$. Because $D$ is dense in $[0,1]$, $P$ extends uniquely to a
continuous function on $[0,1]$.\\
\end{proof}

To prove (iv), by Lemma \ref{lem:cont_ext}, $P$ extends uniquely to a continuous function on
$[0,1]$. By abuse of notation, we denote this extension again by $P$.

By Lemma~\ref{lem:dyadic-symmetry}, we already know that
\[
P(x)=P(1-x),\qquad x\in D.
\]
Since both sides are continuous on $[0,1]$ and agree on the dense set $D$, we obtain
\[
P(x)=P(1-x),\qquad x\in[0,1].
\]

Next, we verify that
\[
P(x)+x=2P(x/2),\qquad x\in D.
\]
Pick any $x\in D_n$. Then $x=\frac{k}{2^n}$ for some $n\ge 1$ and some integer $k$ with $0\le k\le 2^n$. Since $x\in D_n$, we have $P(x)=B_n(x)$, and since $\frac{x}{2}=\frac{k}{2^{n+1}}\in D_{n+1}$, we also have
$P(x/2)=B_{n+1}(x/2)$. By the definition of $B_n$,
\begin{align*}
P(x)+x
&=\left(\frac{nk}{2^n}-\frac{1}{2^{n-1}}F(k)\right)+\frac{k}{2^n}=\frac{(n+1)k}{2^n}-\frac{1}{2^{n-1}}F(k),
\end{align*}
while
\begin{align*}
2P\!\left(\frac{x}{2}\right)
&=2\left(\frac{(n+1)k}{2^{n+1}}-\frac{1}{2^n}F(k)\right)=\frac{(n+1)k}{2^n}-\frac{1}{2^{n-1}}F(k).
\end{align*}
Hence
\[
P(x)+x=2P(x/2).
\]
This proves (iv) on $D$.\\

To prove part (v), define
\[
g(x):=
\begin{cases}
x\log_2\!\left(\frac1x\right),& x\in(0,1],\\[1mm]
0,& x=0,
\end{cases}
\qquad
\widetilde g(x):=g(1-x),
\]
and using $\tilde{g}(X)=g(1-x)$ define
\[
B(x):=\max\{g(x),\widetilde g(x)\}
      =x^*\log_2\!\left(\frac1{x^*}\right),
\qquad x\in[0,1].
\]
Clearly,$
B(0)=B(1)=0.$
We first show that $g$ satisfies \eqref{ori} on $[0,1]$. By symmetry, the same will then
hold for $\widetilde g$.

If $x=0$ or $y=0$, the inequality is immediate. If $x=y$, it is trivial. Thus assume
\[
0<x\le y\le 1.
\]
Write
\[
y=(1+t)x,\qquad t\ge0.
\]
Then
\[
\frac{x+y}{2}=\frac{(2+t)x}{2},
\qquad
\frac{|x-y|}{2}=\frac{tx}{2}.
\]
A direct computation gives
\begin{align*}
g\!\left(\frac{x+y}{2}\right)-\frac{g(x)+g(y)}{2}
&=
\frac{x}{2}\Bigl((2+t)\Bigl(1-\log_2(2+t)\Bigr)+(1+t)\log_2(1+t)\Bigr).
\end{align*}
Therefore it is enough to prove that
\[
(2+t)\Bigl(1-\log_2(2+t)\Bigr)+(1+t)\log_2(1+t)\le t,
\qquad t\ge0.
\]
Equivalently,
\[
\phi(t):=(1+t)\log_2(1+t)-(2+t)\log_2(2+t)+2\le0.
\]
Now
\[
\phi(0)=0,
\]
and
\[
\phi'(t)=\log_2\!\left(\frac{1+t}{2+t}\right)\le0,\qquad t\ge0.
\]
Hence $\phi(t)\le0$ for all $t\ge0$, and thus $g$ satisfies \eqref{ori} on $[0,1]$.

By symmetry, $\widetilde g(x)=g(1-x)$ also satisfies \eqref{ori}. We now claim that the
maximum of two functions satisfying the endpoint condition and \eqref{ori} again satisfies
the same properties. Indeed, if $u,v$ satisfy \eqref{ori} and $w:=\max\{u,v\}$, then
\begin{align*}
w\!\left(\frac{x+y}{2}\right)
&=\max\!\left\{u\!\left(\frac{x+y}{2}\right),\,v\!\left(\frac{x+y}{2}\right)\right\}\\
&\le
\max\!\left\{
\frac{u(x)+u(y)}{2}+\frac{|x-y|}{2},
\frac{v(x)+v(y)}{2}+\frac{|x-y|}{2}
\right\}\\
&\le
\frac{\max\{u(x),v(x)\}+\max\{u(y),v(y)\}}{2}+\frac{|x-y|}{2}\\
&=
\frac{w(x)+w(y)}{2}+\frac{|x-y|}{2}.
\end{align*}
Applying this with $u=g$ and $v=\widetilde g$, we conclude that
\[
B(x)=x^*\log_2\!\left(\frac1{x^*}\right)
\]
satisfies the endpoint condition and \eqref{ori} on $[0,1]$, hence also on $D$.

By part~(ii), since $P$ is pointwise maximal on $D$, we obtain
\[
P(x)\ge B(x)=x^*\log_2\!\left(\frac1{x^*}\right),
\qquad x\in D.
\]
By continuity of $P$ from part~(iii), the same inequality holds for all $x\in[0,1]$.

It remains to prove the claimed equality cases. From part~(iv), taking $x=1$, we get
\[
P(1)+1=2P(1/2).
\]
Since $P(1)=0$, it follows that
\[
P(1/2)=\frac12.
\]
Now we prove by induction that
\[
P(2^{-k})=\frac{k}{2^k},\qquad k\ge1.
\]
This is true for $k=1$. Assume it holds for $k-1$. Applying part~(iv) with
$x=2^{-(k-1)}$, we obtain
\[
P(2^{-(k-1)})+2^{-(k-1)}=2P(2^{-k}).
\]
Hence, by the induction hypothesis,
\[
2P(2^{-k})
=
\frac{k-1}{2^{k-1}}+\frac{1}{2^{k-1}}
=
\frac{k}{2^{k-1}},
\]
so
\[
P(2^{-k})=\frac{k}{2^k}.
\]
By the symmetry proved in part~(iv),
\[
P(1-2^{-k})=P(2^{-k})=\frac{k}{2^k}.
\]
Finally,
\[
2^{-k}\log_2(2^k)=\frac{k}{2^k},
\]
so equality holds at
\[
x=2^{-k}
\qquad\text{and}\qquad
x=1-2^{-k}.
\]
This proves~(v).\\

To prove part  (vi), by Lemma \ref{lem:cont_ext}, the function $P:D\to[0,\infty)$ extends uniquely to a continuous
function on $[0,1]$. By abuse of notation, we denote this extension again by $P$.
Since $P(0)=P(1)=0$ on $D$, continuity gives
\[
P(0)=P(1)=0
\]
on $[0,1]$ as well.

We claim that this continuous extension still satisfies \eqref{ori} on all of $[0,1]$.
Fix $x,y\in[0,1]$. For each $n\ge1$, let
\[
x_n:=\frac{\lfloor 2^n x\rfloor}{2^n},
\qquad
y_n:=\frac{\lfloor 2^n y\rfloor}{2^n}.
\]
Then $x_n,y_n\in D$, and
\[
x_n\to x,\qquad y_n\to y,\qquad \frac{x_n+y_n}{2}\to \frac{x+y}{2}.
\]
Since $P$ satisfies \eqref{ori} on $D$, we have
\[
P\!\left(\frac{x_n+y_n}{2}\right)
\le
\frac{P(x_n)+P(y_n)}{2}
+\frac{|x_n-y_n|}{2}.
\]
Letting $n\to\infty$ and using continuity of $P$, we obtain
\[
P\!\left(\frac{x+y}{2}\right)
\le
\frac{P(x)+P(y)}{2}
+\frac{|x-y|}{2},
\qquad x,y\in[0,1].
\]
Thus the continuous extension of $P$ satisfies the endpoint condition and
\eqref{ori} on $[0,1]$.
By the definition of $B_{1,1}$ as the pointwise maximal continuous function on
$[0,1]$ satisfying the endpoint condition and \eqref{ori}, it follows that
\[
P(x)\le B_{1,1}(x),\qquad x\in[0,1].
\]
In particular,
\[
P(x)\le B_{1,1}(x),\qquad x\in D.
\]

On the other hand, the restriction $B_{1,1}|_D$ satisfies
\[
B_{1,1}(0)=B_{1,1}(1)=0
\]
and \eqref{ori} on $D$, since $B_{1,1}$ satisfies these properties on all of
$[0,1]$. Therefore, by part~(ii), the dyadic maximality of $P$ gives
\[
B_{1,1}(x)\le P(x),\qquad x\in D.
\]
Hence
\[
B_{1,1}(x)=P(x),\qquad x\in D.
\]

Since both $P$ and $B_{1,1}$ are continuous on $[0,1]$ and agree on the dense set
$D$, they in fact agree on all of $[0,1]$. In particular,
\[
B_{1,1}|_D=P.
\]
This proves~(vi).

\section{Proof of  Theorem~\ref{thm-bn}.iii }
In this section, we verify part (iii) of Theorem~\ref{thm-bn}. It suffices to show that there exists a universal constant $C>0$ such that for any $m\geq 1$ if $|x-y|< 2^{-m}$ where $x,y \in D$, then $|P(x)-P(y)|\leq Cm/2^{m}$.

For any $x,y\in D$ with $0<x,y<1$, there exists $n\ge 1$ such that
$x,y\in D_n$, $x=\frac{k}{2^n}$, and $y=\frac{l}{2^n}$ for some
$0<k,l<2^n$.
The condition $|x-y|<2^{-m} $ implies
$|k-l|\leq 2^{n-m}$.
Using the definition of $B_n$ we have 
\begin{align*}
|B_n(x) - B_n(y)| =& \left| B_n\left(\frac{k}{2^n}\right) -B_n\left(\frac{l}{2^n}\right)
    \right|\\
    =&
    \left|
\frac{ (n+1) (k-l) - (F(2k) - F(2l))  }{2^n}
    \right|.
\end{align*}
For such $k,l$, it suffices to show
\begin{equation}
     \left|\frac{ n (k-l) - (F(2k) - F(2l))  }{2^n}
    \right|\leq C\frac{m}{2^{m}}\label{eq:whatweneed}.
\end{equation}

Recall $F(0)=0$, and  $F(k)=\sum_{i=0}^{k-1} s(i)$, where $s(i)$ is the number of $1$'s in the binary representation of $i$. 
From {\cite[Remark 2.5]{hart}} we have
    \begin{equation*}
        F(l) = \max_{0\leq m\leq l/2} (F(m)+F(l-m)+m).
    \end{equation*}
For any integer $0 < k \leq 2^n$, its binary representation can be written as
\[
k = 2^{k_1} + 2^{k_2} + \cdots + 2^{k_T},
\]
where 
\[
n \geq k_1 > k_2 > \cdots > k_T \geq 0.
\]

We further make the following observations.

\begin{lemma}\label{lm:obv1}
    For any integer $k>0$ and integer $p$ such that $0 \leq p \leq 2^k$, it holds 
    $$F(2^k + p) = k 2^{k-1} + p + F(p).$$
\end{lemma}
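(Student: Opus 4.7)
The plan is to prove the identity by splitting the sum defining $F$ at the threshold $2^k$ and exploiting the additive structure of the binary digit-sum $s$.

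First I would write
\begin{align*}
F(2^k + p) = \sum_{i=0}^{2^k + p - 1} s(i) = \sum_{i=0}^{2^k - 1} s(i) + \sum_{i=2^k}^{2^k + p - 1} s(i) = F(2^k) + \sum_{j=0}^{p-1} s(2^k + j),
\end{align*}
where the last equality re-indexes the tail via $i = 2^k + j$. The condition $p \leq 2^k$ ensures that in the tail we have $0 \leq j \leq p-1 < 2^k$, so the binary expansion of $2^k + j$ is exactly the expansion of $j$ with an extra $1$ in position $k$. Consequently $s(2^k + j) = 1 + s(j)$ for each such $j$, and the tail becomes
\begin{align*}
\sum_{j=0}^{p-1}\bigl(1 + s(j)\bigr) = p + F(p).
\end{align*}

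Next I need to identify $F(2^k)$ with $k\, 2^{k-1}$. The cleanest way, using what is already available, is induction on $k$ via Lemma~\ref{lm:F1}: the base case $F(2) = F(2 \cdot 1) = 2F(1) + 1 = 1 = 1 \cdot 2^{0}$, and the inductive step
\begin{align*}
F(2^{k}) = F(2 \cdot 2^{k-1}) = 2F(2^{k-1}) + 2^{k-1} = 2 \cdot (k-1) 2^{k-2} + 2^{k-1} = k \, 2^{k-1}.
\end{align*}
Combining this with the previous display yields $F(2^k + p) = k \, 2^{k-1} + p + F(p)$, which is exactly the stated identity.

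There is no real obstacle here; the only subtlety is checking that the hypothesis $p \leq 2^k$ is tight enough to guarantee that none of the indices $2^k + j$ in the second sum ``carries over'' past bit $k$ in binary, so that $s(2^k + j) = s(j) + 1$ holds uniformly. I would flag this explicitly in the proof to make the use of the hypothesis transparent. The identity $F(2^k) = k\, 2^{k-1}$ can alternatively be justified by the classical combinatorial observation that each of the $k$ bit positions contributes a $1$ in exactly $2^{k-1}$ of the integers $0, 1, \ldots, 2^k - 1$, and I would mention this as a sanity check.
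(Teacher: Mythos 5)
Your proof is correct and follows the same decomposition the paper uses: split the defining sum of $F$ at $2^k$, re-index, and apply $s(2^k+j)=1+s(j)$ for $j<2^k$. The only difference is cosmetic: the paper asserts $F(2^k)=k2^{k-1}$ as immediate, while you supply a short induction via Lemma~\ref{lm:F1} together with the combinatorial count of $1$'s per bit position — a reasonable amount of extra detail that changes nothing substantive.
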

\begin{proof}
    It is easy to see that 
    \begin{equation}
    \label{eq:F2k}
           F(2^k) = \sum_{j=0}^{2^k-1}s(j)=k2^{k-1},
    \end{equation}
    and for all $0\leq j< 2^{k}$ we have
    \[
    s(2^{k}+j) = 1+s(j) .
    \]
Let $0 \leq p \leq  2^k$.    We have
    \begin{align*}
        F(2^k + p) &= \sum_{j=0}^{2^k + p - 1} s(j) 
        = \sum_{j=0}^{2^k - 1} s(j) + \sum_{j=2^k}^{2^k + p - 1} s(j) \\
        &= F(2^k) + \sum_{j=0}^{p-1} s(2^k + j) = k 2^{k-1} + \sum_{j=0}^{p-1} \left(1 + s(j)\right) \\
        &= k 2^{k-1} + p + F(p).
    \end{align*}
    This finishes the proof of the lemma. 
\end{proof}

\begin{lemma}\label{lem:F_of_bin_rep}
    For any $0< k\leq 2^n$ of the form 
    $$k = 2^{k_1} + 2^{k_2} + \cdots + 2^{k_T},$$ where $n\geq k_1 > k_2 > \cdots \geq 0$ it holds 
    $$F(k) = \sum_{j=1}^{T} (k_j+2(j-1))2^{k_j-1}.$$
\end{lemma}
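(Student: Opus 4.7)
The plan is to prove this by induction on $T$, the number of terms in the binary representation of $k$, using Lemma~\ref{lm:obv1} as the key recursive tool.

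For the base case $T=1$, we have $k = 2^{k_1}$, and Lemma~\ref{lm:obv1} applied with $p=0$ gives $F(2^{k_1}) = k_1 2^{k_1-1}$, which matches the claimed formula evaluated at $T=1$.

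For the inductive step, assuming the identity for all integers whose binary representation has at most $T-1$ summands, I would decompose
$$k = 2^{k_1} + p, \qquad p = 2^{k_2} + 2^{k_3} + \cdots + 2^{k_T}.$$
Since $k_1 > k_2 > \cdots$, the tail $p$ satisfies $0 \leq p < 2^{k_1}$, so Lemma~\ref{lm:obv1} applies and yields
$$F(k) = k_1 2^{k_1-1} + p + F(p).$$
The inductive hypothesis applied to $p$ (which has exponents $k_2 > \cdots > k_T$, reindexed as the first $T-1$ of its summands) produces
$$F(p) = \sum_{j=2}^{T} \bigl(k_j + 2(j-2)\bigr) 2^{k_j-1}.$$
Writing $p = \sum_{j=2}^{T} 2\cdot 2^{k_j-1}$ and absorbing this into the sum gives
$$F(k) = k_1 2^{k_1-1} + \sum_{j=2}^{T} \bigl(k_j + 2(j-2) + 2\bigr) 2^{k_j-1} = \sum_{j=1}^{T} \bigl(k_j + 2(j-1)\bigr) 2^{k_j-1},$$
which is the claimed formula.

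I do not anticipate any serious obstacle: once Lemma~\ref{lm:obv1} is in hand the induction is essentially bookkeeping, and the only point to be careful about is the reindexing that shifts the offset $2(j-2)$ in the inductive hypothesis for $p$ to $2(j-1)$ in the identity for $k$, together with noting the boundary case $k_T=0$ (where $2^{k_T-1}=\tfrac12$) is consistent since for $T=1$, $k=1$ we recover $F(1)=0$.
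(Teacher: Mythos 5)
Your proof is correct and essentially the same as the paper's: both rest on Lemma~\ref{lm:obv1} and the same decomposition $k = 2^{k_1} + p$ with $p = k - 2^{k_1} < 2^{k_1}$. The only difference is presentational — the paper iterates the identity informally ("iterating this process") and collapses the telescoping sums at the end, whereas you package the same recursion as a clean induction on $T$, which is arguably tidier. One small point worth being explicit about: Lemma~\ref{lm:obv1} as stated requires the exponent to be strictly positive, so for $T\geq 2$ the application is safe since $k_1 > k_2 \geq 0$ forces $k_1\geq 1$; the only place $k_1=0$ can occur is the base case $k=1$, where you should verify $F(1)=0$ directly rather than via the lemma, as you implicitly acknowledge.
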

\begin{proof}
    
    Let $p= k-2^{k_1}$, then $p<2^{k_1}$.
    Hence by Lemma \ref{lm:obv1} we have
    \begin{equation}\label{iteration1}
        F(k) = F(2^{k_1}+(k-2^{k_1})) = k_1 2^{k_1-1}+ (k-2^{k_1}) + F(k-2^{k_1}).
    \end{equation}
    Take $p=k-2^{k_1}-2^{k_2}$, then $p<2^{k_2}$ Using Lemma \ref{lm:obv1} again, we have
    \begin{equation}
    \label{iteration2}
\begin{aligned}
    F(k-2^{k_1})  
    &= F(2^{k_2}+(k - 2^{k_{1}} - 2^{k_2})) \\
    &= k_22^{k_2-1} + (k-2^{k_1}-2^{k_2})  + F(k-2^{k_1}-2^{k_2}).
\end{aligned}
    \end{equation}
    Combining \eqref{iteration1} and \eqref{iteration2}, we obtain
    \begin{align*}
        F(k)&=k_1 2^{k_1-1}+ (k-2^{k_1}) +k_22^{k_2-1} + (k-2^{k_1}-2^{k_2})  + F(k-2^{k_1}-2^{k_2})\\
        &=k_1 2^{k_1-1}+ (2^{k_2} + \cdots + 2^{k_T}) +k_22^{k_2-1} + (2^{k_3} + \cdots + 2^{k_T}) \\
        &\;\;+ F(k-2^{k_1}-2^{k_2}).
    \end{align*}
    Iterating this process, we obtain 
    \begin{align*}
        F(k)
        &=k_1 2^{k_1-1}+ (2^{k_2} + \cdots + 2^{k_T}) +k_22^{k_2-1} + (2^{k_3} + \cdots + 2^{k_T}) +\\
        &\quad\cdots  + k_{T-1}2^{k_{T-1}-1}+2^{k_T} + F(2^{k_T})\\
        &= \sum_{j=1}^{T} k_j 2^{k_j-1} + \sum_{j=2}^{T} (j-1)2^{k_j}\\ 
    \end{align*}
\end{proof}

\begin{lemma}\label{claim0}
Let $0 \leq l < k \leq 2^n$ such that $|k - l| \leq 2^{n-m}$; consider the binary representations
\begin{align*}
    l=2^{l_1} + 2^{l_2} +\cdots +2^{l_R},\qquad 
    k=2^{k_1} + 2^{k_2} +\cdots +2^{k_T},
\end{align*}
where $k_1 > k_2 > \cdots \geq 0$ and $l_1>l_2>\cdots >l_R\geq 0$.
Then, for a fixed $p>0$, if $l_i = k_i$ for all $i = 1, \dots, p-1$ and $k_p \geq l_p + 2$, we have 
\[
k_p \leq n - m + 1.
\]
\end{lemma}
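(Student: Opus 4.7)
The plan is a direct estimate of $|k-l|$ from below in terms of $k_p$, exploiting the hypothesis that the top $p-1$ bits of $k$ and $l$ coincide while $k_p \geq l_p + 2$. Since $|k-l| \leq 2^{n-m}$, such a lower bound will immediately force $k_p$ to be small; in fact, I expect to obtain $k_p \leq n-m$, which is slightly sharper than the stated bound $n-m+1$.

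First, I would cancel the common top bits: because $l_i = k_i$ for $i=1,\ldots, p-1$, the terms $2^{k_1}, \ldots, 2^{k_{p-1}}$ appear in both $k$ and $l$ and cancel in the difference, yielding
\[
k - l = \sum_{j=p}^{T} 2^{k_j} - \sum_{j=p}^{R} 2^{l_j}.
\]
Next, I would bound the two sums separately. Dropping everything after the leading term gives $\sum_{j=p}^{T} 2^{k_j} \geq 2^{k_p}$, while the strictly decreasing sequence of exponents $l_p > l_{p+1} > \cdots \geq 0$ yields the geometric-series estimate
\[
\sum_{j=p}^{R} 2^{l_j} \leq 2^{l_p} + 2^{l_p-1} + \cdots + 2^{0} = 2^{l_p+1} - 1.
\]

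Combining these with the gap hypothesis $k_p \geq l_p + 2$, equivalently $2^{l_p+1} \leq 2^{k_p-1}$, gives
\[
k - l \geq 2^{k_p} - (2^{l_p+1} - 1) \geq 2^{k_p} - 2^{k_p-1} + 1 = 2^{k_p - 1} + 1.
\]
Since $k > l$, this quantity equals $|k-l|$, and comparing with the hypothesis $|k-l| \leq 2^{n-m}$ gives $2^{k_p - 1} < 2^{n-m}$, hence $k_p - 1 < n - m$, that is, $k_p \leq n - m \leq n - m + 1$. I do not foresee any real obstacle in executing this plan: it is elementary binary arithmetic once the shared leading bits are cancelled. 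The only subtlety to check is that the difference $k-l$ is indeed positive after the cancellation, which follows from $k_p > l_p$ (so that the leading surviving bit sits on the $k$-side), ensuring $k - l = |k-l|$.
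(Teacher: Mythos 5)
Your proposal is correct and essentially coincides with the paper's argument: both cancel the shared leading bits, bound the $l$-tail by a geometric series, and use the gap $k_p \ge l_p+2$ to conclude $k-l \ge 2^{k_p-1}+1$. The only cosmetic difference is that you first sum to $2^{l_p+1}-1$ and then apply $l_p+1 \le k_p-1$, whereas the paper inserts the exponent bound before summing; the arithmetic and conclusion are the same.
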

\begin{proof}
Fix $p>0$, such that $l_i=k_i$ for all $i=1, \ldots, p-1$, and $k_p\geq l_p+2$.
Using the finite geometric series formula, we have
\begin{equation}\label{eq:1234}
    2^{k_p-1}-1 = 2^{k_p-2} +2^{k_p-3}+\cdots 2^{0}\geq 2^{l_p}+2^{l_p-1}+\cdots+2^{0}.
\end{equation}
From \eqref{eq:1234} we have 
\begin{align*}
    |k-l|&=2^{k_p}+\cdots+ 2^{k_T} - (2^{l_p} +\cdots +2^{l_R})\\
    &\geq 2^{k_p} - (2^{k_p-1}-1)= 2^{k_p-1}+1.
\end{align*}
 Since $|k-l|\leq 2^{n-m}$, it follows  $2^{k_p-1}+1 \leq 2^{n-m}$. Hence,
 \[
 k_p-1\leq n-m.
 \]
 This completes the proof of the lemma.
\end{proof}

\begin{lemma}\label{obs2}
Let $0 \leq l < k \leq 2^n$, such that $|k - l| \leq 2^{n-m}$, and having the binary representations
\begin{align*}
    l&=2^{l_1} + 2^{l_2} +\cdots +2^{l_R},\qquad 
    k&=2^{k_1} + 2^{k_2} +\cdots +2^{k_T}, 
\end{align*}
where
\[
n\geq k_1 > k_2 > \cdots \geq 0,
\qquad
n\geq l_1>l_2>\cdots >l_R\geq 0.
\]
Fix $p>0$ and assume
\[
l_i = k_i \quad\text{for } i=1,\dots,p-1,
\qquad
k_p = l_p + 1.
\]
Let $M\ge 1$ be maximal such that
\[
l_{p+j}=k_p-(j+1),\qquad j=0,\dots,M-1.
\]
Equivalently, either $p+M>R$, or else
\[
l_{p+M}\le k_p-(M+2).
\]
Then
\begin{equation}\label{eq1}
p<T \Longrightarrow k_{p+1}\le n-m+1.
\end{equation}
Moreover,
\begin{equation}\label{eq2}
k_p-M\le n-m+1.
\end{equation}
If $p+M\le R$, then
\begin{equation}\label{eq3}
l_{p+M}\le n-m+1.
\end{equation}
\end{lemma}

\begin{proof}
Assume first that $p<T$, so that $k_{p+1}$ is defined. Since
$l_i=k_i$ for $i=1,\dots,p-1$ and $k_p=l_p+1$, we have
\begin{align*}
|k-l|
&=2^{k_p}+2^{k_{p+1}}+\cdots+2^{k_T}-(2^{l_p}+\cdots+2^{l_R})\\
&\ge 2^{k_p}+2^{k_{p+1}}+\cdots+2^{k_T}-(2^{k_p-1}+\cdots+2^0)\\
&=2^{k_p}+2^{k_{p+1}}+\cdots+2^{k_T}-(2^{k_p}-1)\\
&\ge 2^{k_{p+1}}.
\end{align*}
Since $|k-l|\le 2^{n-m}$, it follows that $k_{p+1}\le n-m$, and hence
\eqref{eq1} holds.

Next we prove \eqref{eq2}. By maximality of $M$, the block
\[
2^{l_p}+2^{l_{p+1}}+\cdots+2^{l_{p+M-1}}
\]
is exactly
\[
2^{k_p-1}+2^{k_p-2}+\cdots+2^{k_p-M}.
\]
Therefore
\begin{align*}
|k-l|
&=2^{k_p}+\cdots+2^{k_T}
-\bigl(2^{k_p-1}+\cdots+2^{k_p-M}\bigr)
-\sum_{j=p+M}^{R}2^{l_j}.
\end{align*}
If $p+M>R$, then the last sum is empty, so
\[
|k-l|\ge 2^{k_p-M}.
\]
If $p+M\le R$, then by maximality of $M$ we have
$l_{p+M}\le k_p-M-2$, and hence
\[
\sum_{j=p+M}^{R}2^{l_j}\le 2^{k_p-M-2}+2^{k_p-M-3}+\cdots+2^0
=2^{k_p-M-1}-1.
\]
Thus in either case,
\[
|k-l|\ge 2^{k_p-M-1}.
\]
Since $|k-l|\le 2^{n-m}$, we obtain
\[
k_p-M\le n-m+1,
\]
which proves \eqref{eq2}.

Finally, if $p+M\le R$, then by maximality of $M$,
\[
l_{p+M}\le k_p-M-2.
\]
Combining this with \eqref{eq2} gives
\[
l_{p+M}\le n-m-1\le n-m+1,
\]
which proves \eqref{eq3}.
\end{proof}

\begin{lemma}{\label{lemma_summation}}
    For $M\geq 1$, we have
    \[
    \sum_{j=1}^{M}j2^{-j} = 2-2^{-M}(M+2).
    \]
\end{lemma}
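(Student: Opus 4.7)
The identity is a classical finite sum evaluation, and the cleanest route is the standard ``doubling trick'' (sometimes called Abel summation): multiply the sum by $2$, reindex, subtract, and reduce to a geometric series. Let $S_M = \sum_{j=1}^{M} j\, 2^{-j}$. The plan is to compute $2S_M - S_M = S_M$ directly from the definition.

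Concretely, I would reindex $2S_M = \sum_{j=1}^{M} j\, 2^{-(j-1)} = \sum_{i=0}^{M-1} (i+1)\, 2^{-i}$, then subtract $S_M$ term by term. The terms for $j=0$ and $j=M$ must be handled separately (they produce the boundary contributions $1$ and $-M\, 2^{-M}$), while the middle terms telescope into the geometric sum $\sum_{j=1}^{M-1} 2^{-j}$. Using the closed form $\sum_{j=1}^{M-1} 2^{-j} = 1 - 2^{-(M-1)}$, one collects
\[
S_M = 1 + \bigl(1 - 2^{-(M-1)}\bigr) - M\, 2^{-M} = 2 - 2\cdot 2^{-M} - M\, 2^{-M} = 2 - 2^{-M}(M+2),
\]
which is the claimed identity.

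An equivalent and equally short alternative would be a one-line induction on $M$: the base case $M=1$ gives $\tfrac{1}{2} = 2 - \tfrac{3}{2}$, and the inductive step adds $(M+1) 2^{-(M+1)}$ to $2 - 2^{-M}(M+2)$ and simplifies to $2 - 2^{-(M+1)}(M+3)$. Either route is a routine calculation; there is no real obstacle beyond bookkeeping of the boundary terms in the doubling argument, or of the algebra of negative powers of $2$ in the induction step.
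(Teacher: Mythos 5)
Your proof is correct, and it takes a genuinely different route from the paper. The paper starts from the closed form of the finite geometric series $\sum_{j=1}^{M}x^{j}$, differentiates both sides with respect to $x$, multiplies by $x$, and then substitutes $x=\tfrac{1}{2}$; you instead use the elementary doubling/telescoping trick ($2S_M-S_M$ and reindex), with a one-line induction offered as a backup. Your argument is purely algebraic and avoids calculus altogether, which is arguably cleaner for an isolated identity of this kind; the paper's differentiation approach is slightly heavier here but has the advantage of generalizing mechanically to sums like $\sum j^{k}x^{j}$ for higher $k$, should one ever need them. For the purposes of this lemma either route is a routine computation, and your bookkeeping of the boundary terms ($1$ from $j=0$, $-M\,2^{-M}$ from $j=M$) and the geometric middle sum $1-2^{-(M-1)}$ is correct, yielding $2-2^{-M}(M+2)$ as required.
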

\begin{proof}
   Indeed, recall that
    \[
    \sum_{j=0}^{M}x^j = \frac{1-x^{M+1}}{1-x}.
    \]
    Differentiating both sides and then multiplying by $x$, yields 
    \[
\sum_{j=1}^{M} jx^{j}
= x\frac{\Big(-(M+1)x^M(1-x) + (1-x^{M+1})\Big)}{(1-x)^2}.
\]
    Substituting $x=2^{-1}$, the identity follows.
\end{proof}

We first dispose of the endpoint cases. If one of $x,y$ equals $1$, then by
Lemma~\ref{lem:dyadic-symmetry},
\[
|P(x)-P(y)|=|P(1-x)-P(1-y)|,
\]
and one of $1-x,1-y$ equals $0$. Hence it is enough to treat the case where one
of the two points is $0$.
By symmetry in $x$ and $y$, we may assume
\[
y=0,\qquad x=\frac{k}{2^n}\in D_n,\qquad 1\le k\le 2^{n-m},
\]
so that $|x-y|=x\le 2^{-m}$. Write
\[
k=2^{k_1}+\cdots+2^{k_T},\qquad n\ge k_1>k_2>\cdots>k_T\ge 0.
\]
Since $k\le 2^{n-m}$, we have $k_1\le n-m$, and therefore
\[
n-k_j\ge m+j-1\ge m,\qquad j=1,\dots,T.
\]

Using Lemma~\ref{lem:F_of_bin_rep} on $F(2k)$, we get
\[
\frac{|nk-F(2k)|}{2^n}
=
\frac1{2^n}\left|
\sum_{j=1}^{T}(n-k_j-2j+1)2^{k_j}
\right|.
\]
Hence
\[
\frac{|nk-F(2k)|}{2^n}
\le
\sum_{j=1}^{T}\frac{n-k_j}{2^{\,n-k_j}}
+
2\sum_{j=1}^{T}\frac{j}{2^{\,n-k_j}}
+
\frac{k}{2^n}.
\]

For the first sum,
\[
\sum_{j=1}^{T}\frac{n-k_j}{2^{\,n-k_j}}
\le
\sum_{r=m}^{\infty}\frac{r}{2^r}
\lesssim \frac{m}{2^m}.
\]
For the second sum,
\[
\sum_{j=1}^{T}\frac{j}{2^{\,n-k_j}}
\le
\sum_{j=1}^{\infty}\frac{j}{2^{m+j-1}}
\lesssim 2^{-m}
\lesssim \frac{m}{2^m}.
\]
Finally,
\[
\frac{k}{2^n}\le 2^{-m}\lesssim \frac{m}{2^m}.
\]
Thus the required bound holds whenever one of $x,y$ belongs to $\{0,1\}$.

It remains to consider the case $0<x,y<1$.

For $x,y\in D$ with $0<x,y<1$ and $|x-y|\le 2^{-m}$, write
$x=\frac{k}{2^n}$ and $y=\frac{l}{2^n}$ with $0<k,l<2^n$. To prove
$|P(x)-P(y)|\le C m/2^m$, it is enough to establish \eqref{eq:whatweneed}, namely
\begin{equation}\label{gant01}
     \left|\frac{ n (k-l) - (F(2k) - F(2l))  }{2^n}
    \right|\leq C\frac{m}{2^{m}}.
\end{equation}
Consider the following binary representation of $k,l$ and $2k,2l$,
\begin{equation}\label{eq:binaryrep}
    \begin{aligned}
    l&=2^{l_1} + 2^{l_2} +\cdots +2^{l_R},&
    k&=2^{k_1} + \cdots +2^{k_T},\\
    2l&=2^{l_1+1} + 2^{l_2+1} +\cdots +2^{l_R+1},&
    2k&=2^{k_1+1} +\cdots +2^{k_T+1}.
\end{aligned}
\end{equation}
Since $|k-l|\leq  2^{n-m}$, we know 
\begin{equation}\label{eq:k-l}
    |2^{l_1} + 2^{l_2} +\cdots +2^{l_R} - 2^{k_1} - 2^{k_2} -\cdots -2^{k_T}| \leq 2^{n-m}.
\end{equation}

Using Lemma \ref{lem:F_of_bin_rep} on $F(2l)$ and $F(2k)$, 
we rewrite \eqref{gant01} as follows
\begin{align*}
    &\frac{1}{{2^n}}
    \left| 
    \sum_{j=1}^{T}n2^{k_j} - \sum_{j=1}^{R}n2^{l_j}
     \right.\\
     &\left. -\left(
    \sum_{j=1}^{T}((k_j+1)+2(j-1))2^{k_j}
    -\sum_{j=1}^{R}((l_j+1)+2(j-1))2^{l_j}
    \right)
    \right|
    \leq C\frac{m}{2^{m}},
\end{align*}
which simplifies to
\begin{align*}
    \frac{1}{{2^n}}
    \Bigg|
    \sum_{j=1}^{T}(n - k_j- 2j +1 )2^{k_j}
    -\sum_{j=1}^{R}( n - l_j- 2j+1 )2^{l_j}
    \Bigg|
    \leq C\frac{m}{2^{m}}.
\end{align*}
Using the triangle inequality, it is sufficient to show
\begin{align*}
    \frac{1}{{2^n}}
    \Bigg|
    \sum_{j=1}^{T}(n - k_j- 2j  )2^{k_j}
    -\sum_{j=1}^{R}( n - l_j- 2j )2^{l_j}
    \Bigg|
    + \left|\frac{k-l}{2^n}\right|
    \leq C\frac{m}{2^{m}}.
\end{align*}
Since $ \left|\frac{k-l}{2^n}\right|<2^{-m}$, it is enough to show 
\begin{equation}\label{eq:goal_bound}
        \frac{1}{{2^n}}
    \Bigg|
    \sum_{j=1}^{T}(n - k_j- 2j  )2^{k_j}
    -\sum_{j=1}^{R}( n - l_j- 2j )2^{l_j}
    \Bigg|
    \leq C\frac{m}{2^{m}}.
\end{equation}
Without loss of generality, assume $l<k$ (the case $k=l$ is trivial).
We first dispose of the case in which the binary expansion of $l$ is an initial segment of that of $k$, namely
\[
R<T,
\qquad
l_i=k_i \quad \text{for } i=1,\dots,R.
\]
Then
\begin{align*}
\frac{1}{{2^n}}
\Bigg|
\sum_{j=1}^{T}(n - k_j- 2j )2^{k_j}
-\sum_{j=1}^{R}( n - l_j- 2j )2^{l_j}
\Bigg|
&=
\frac{1}{{2^n}}
\Bigg|
\sum_{j=R+1}^{T}(n - k_j- 2j )2^{k_j}
\Bigg|\\
&\le
\sum_{j=R+1}^{T}\frac{|n-k_j|}{2^{\,n-k_j}}
+
2\sum_{j=R+1}^{T}\frac{j}{2^{\,n-k_j}}.
\end{align*}
Since
\[
k-l=\sum_{j=R+1}^{T}2^{k_j}\le 2^{n-m},
\]
we have $k_{R+1}\le n-m$, and therefore
\[
n-k_{R+1}\ge m.
\]
Moreover, since $n\ge k_1$ and $k_1>k_2>\cdots>k_T$, we have
\[
n-(j-1)\ge k_j,
\qquad j=1,\dots,T,
\]
hence
\[
j\le n-k_j+1,
\qquad j=1,\dots,T.
\]
Because the integers
\[
n-k_{R+1},\dots,n-k_T
\]
are distinct and all belong to $\{m,m+1,\dots\}$, it follows that
\begin{align*}
\sum_{j=R+1}^{T}\frac{|n-k_j|}{2^{\,n-k_j}}
&\le \sum_{r=m}^{\infty}\frac{r}{2^r}
\lesssim \frac{m}{2^m},\\
\sum_{j=R+1}^{T}\frac{j}{2^{\,n-k_j}}
&\le \sum_{j=R+1}^{T}\frac{n-k_j+1}{2^{\,n-k_j}}
\le \sum_{r=m}^{\infty}\frac{r+1}{2^r}
\lesssim \frac{m}{2^m}.
\end{align*}
Thus \eqref{eq:goal_bound} holds in this initial-segment case.

We may therefore assume that there exists an integer $p\le \min\{R,T\}$ such that
\[
k_i=l_i \quad \text{for } i=1,\dots,p-1,
\qquad
k_p>l_p.
\]
Let $p$ be the first such index. Then
\begin{align*}
    &\frac{1}{{2^n}}
    \Bigg|
    \sum_{j=1}^{T}(n - k_j- 2j  )2^{k_j}
    -\sum_{j=1}^{R}( n - l_j- 2j )2^{l_j}
    \Bigg|\\
    &=\frac{1}{{2^n}}
    \Bigg|
    \sum_{j=p}^{T}(n - k_j- 2j  )2^{k_j}
    -\sum_{j=p}^{R}( n - l_j- 2j )2^{l_j}
    \Bigg|.
\end{align*}
To show \eqref{eq:goal_bound}, we consider two cases, $k_p= l_p+1$ and $k_p\geq l_p+2$.\\

\textbf{Case 1} (simple case). Consider the case
$k_p\geq l_p+2$. To verify \eqref{eq:goal_bound}, observe that from the triangle inequality, it follows
\begin{equation}\label{eq:summation}
    \begin{aligned}
    &\frac{1}{{2^n}}
    \Bigg|
    \sum_{j=p}^{T}(n - k_j- 2j  )2^{k_j}
    -\sum_{j=p}^{R}( n - l_j- 2j )2^{l_j}
    \Bigg|\\
    &\leq 
    \sum_{j=p}^{T}\frac{|n - k_j- 2j  |}{2^{n-k_j}}
    +\sum_{j=p}^{R}\frac{| n - l_j- 2j |}{2^{n-l_j}}\\
    &\leq 
    \underbrace{\sum_{j=p}^{T}\frac{|n - k_j|}{2^{n-k_j}}}_{(I)}
    +2\underbrace{\sum_{j=p}^{T}\frac{j}{2^{n-k_j}}}_{(II)}
    +\underbrace{\sum_{j=p}^{R}\frac{| n - l_j |}{2^{n-l_j}}}_{(III)}
    +2\underbrace{\sum_{j=p}^{R}\frac{j}{2^{n-l_j}}}_{(IV)}.
\end{aligned}
\end{equation}
We study each summation separately. 
For summation $(I)$, by Lemma~\ref{claim0} we have $n-k_p\ge m-1$. Since
\[
k_p>k_{p+1}>\cdots >k_T,
\]
the integers
\[
n-k_p,\; n-k_{p+1},\;\dots,\; n-k_T
\]
are distinct and all belong to $\{m-1,m,m+1,\dots\}$. Therefore
\begin{align*}
(I)
= \sum_{j=p}^{T}\frac{|n-k_j|}{2^{n-k_j}}
\le \sum_{r=m-1}^{\infty}\frac{r}{2^r}
\lesssim \frac{m}{2^m}.
\end{align*}

To bound $(III)$, notice that $l_p+2\le k_p\le n-m+1$, hence
$n-l_p\ge m+1$. Since
\[
l_p>l_{p+1}>\cdots >l_R,
\]
the integers
\[
n-l_p,\; n-l_{p+1},\;\dots,\; n-l_R
\]
are distinct and all belong to $\{m+1,m+2,\dots\}$. Therefore
\begin{align*}
(III)
= \sum_{j=p}^{R}\frac{|n-l_j|}{2^{n-l_j}}
\le \sum_{r=m+1}^{\infty}\frac{r}{2^r}
\lesssim \frac{m}{2^m}.
\end{align*}

To estimate $(II)$, notice that by binary representation of $k$ (see \eqref{eq:binaryrep}) we have $n\geq k_1$. Hence it follows  $n-1\geq k_2$, and by iterating we get  $n-(j-1)\geq k_j$, i.e., 
\begin{align}\label{iter01}
    j \leq n-k_{j}+1
\end{align}

Thus
\begin{align*}
(II)
=\sum_{j=p}^{T}\frac{j}{2^{n-k_j}}\le \sum_{j=p}^{T}\frac{n-k_j+1}{2^{n-k_j}} \le \sum_{r=m-1}^{\infty}\frac{r+1}{2^r}
\lesssim \frac{m}{2^m}.
\end{align*}

To bound $(IV)$, we first note that, exactly as for the sequence $(k_j)$, the
strict inequalities
$
n\ge l_1>l_2>\cdots>l_R\ge 0
$ 
imply
\[
n-(j-1)\ge l_j,\qquad j=1,\dots,R,
\]
and hence
\begin{equation}\label{iter01-l}
j\le n-l_j+1,\qquad j=1,\dots,R.
\end{equation}
Since $n-l_p\ge m+1$, the numbers
\[
n-l_p,\;n-l_{p+1},\;\dots,\;n-l_R
\]
are distinct and all belong to $\{m+1,m+2,\dots\}$. Therefore
\begin{align*}
(IV)
&=\sum_{j=p}^{R}\frac{j}{2^{n-l_j}}
\le \sum_{j=p}^{R}\frac{n-l_j+1}{2^{n-l_j}} \le \sum_{r=m+1}^{\infty}\frac{r+1}{2^r}
\lesssim \frac{m}{2^m}.
\end{align*}

This completes the proof in the case $k_p\geq l_p+2$.\\

\textbf{Case 2.} For the same $l,k$ given by \eqref{eq:binaryrep}, assume
\[
l_i=k_i \quad\text{for } i=1,\dots,p-1,
\qquad
l_p=k_p-1.
\]
Let $M\ge 1$ be maximal such that
\[
l_{p+j}=k_p-(j+1),\qquad j=0,\dots,M-1.
\]
Thus either $p+M>R$, or else
\[
l_{p+M}\le k_p-(M+2).
\]
We have
\begin{align*}
     &\frac{1}{{2^n}}
    \Big|
    \sum_{j=1}^{T}(n - k_j- 2j  )2^{k_j}
    -\sum_{j=1}^{R}( n - l_j- 2j )2^{l_j}
    \Big|\\
    &=\frac{1}{{2^n}}
    \Big|
    \sum_{j=p}^{T}(n - k_j- 2j  )2^{k_j}
    -\sum_{j=p}^{R}( n - l_j- 2j )2^{l_j}
    \Big|\\
    &\leq 
    \underbrace{
    \frac{1}{2^n}\Big|
    (n-k_p-2p){2^{k_p}} - \sum_{j=p}^{p+M-1} (n-l_j -2j) 2^{l_j}
    \Big|}_{(I)}\\
    &\qquad+
    \underbrace{
    \frac{1}{2^n}\sum_{j=p+1}^{T} |n-k_j -2j|2^{k_j} 
    }_{(II)}
    +
    \underbrace{
    \frac{1}{2^n} \sum_{j=p+M}^{R} |n-l_j-2j|2^{l_j}
    }_{(III)}.
\end{align*}
Here the sum in $(II)$ is understood to be $0$ when $p=T$, and the sum in
$(III)$ is understood to be $0$ when $p+M>R$.

We bound each summation individually. For summation $(II)$, if $p=T$, then
$(II)=0$. Otherwise, by the triangle inequality we have
\begin{align*}
    (II)\leq 
    \underbrace{\sum_{j=p+1}^{T} \frac{|n-k_j|}{2^{n-k_j}}}_{(II.1)}
    +2\underbrace{\sum_{j=p+1}^{T} \frac{j}{2^{n-k_j}}}_{(II.2)}.
\end{align*}
Since $p<T$, Lemma~\ref{obs2}~\eqref{eq1} gives $n-k_{p+1}\ge m-1$. Thus
\[
\{n-k_{p+1},\dots,n-k_T\}\subseteq \{m-1,m,m+1,\dots\},
\]
and hence
\begin{align*}
    (II.1)
    &\le \sum_{j=m-1}^{\infty}\frac{j}{2^j}
    \lesssim \frac{m}{2^m},\\
    (II.2)
    &\le \sum_{j=p+1}^{T}\frac{n-k_j+1}{2^{n-k_j}}
    \le \sum_{j=m-1}^{\infty}\frac{j}{2^j}
      +\sum_{j=m-1}^{\infty}\frac{1}{2^j}
    \lesssim \frac{m}{2^m}.
\end{align*}
Therefore
\[
(II)\lesssim \frac{m}{2^m}.
\]

To bound summation $(III)$, if $p+M>R$, then $(III)=0$. Otherwise,
\begin{align*}
(III)
&=\sum_{j=p+M}^{R} \frac{|n-l_j - 2j|2^{l_j}}{2^n} \\
&\le \sum_{j=p+M}^{R} \frac{n-l_j}{2^{n-l_j}}
+ 2 \sum_{j=p+M}^{R} \frac{j}{2^{n-l_j}}.
\end{align*}
By Lemma~\ref{obs2}~\eqref{eq3}, we have $n-l_{p+M}\ge m-1$. Also, since
\[
n\ge l_1>l_2>\cdots>l_R\ge 0,
\]
we have
\[
n-(j-1)\ge l_j,\qquad j=1,\dots,R,
\]
hence
\[
j\le n-l_j+1,\qquad j=1,\dots,R.
\]
Therefore the numbers
\[
n-l_{p+M},\dots,n-l_R
\]
are distinct and all belong to $\{m-1,m,m+1,\dots\}$, so
\begin{align*}
\sum_{j=p+M}^{R} \frac{n-l_j}{2^{n-l_j}}
&\le \sum_{r=m-1}^{\infty}\frac{r}{2^r}
\lesssim \frac{m}{2^m},\\
\sum_{j=p+M}^{R} \frac{j}{2^{n-l_j}}
&\le \sum_{j=p+M}^{R} \frac{n-l_j+1}{2^{n-l_j}}
\le \sum_{r=m-1}^{\infty}\frac{r+1}{2^r}
\lesssim \frac{m}{2^m}.
\end{align*}
Thus
\[
(III)\lesssim \frac{m}{2^m}.
\]

Finally, to bound summation $(I)$, recall that $l_{p+j} = k_p - (j+1)$ for $j=0,\ldots, M-1$, and, therefore, 
$$n-l_{p+j} -2(p+j)=n-k_p +(j+1) -2(p+j) =n-k_p - 2p-j+1$$
for all $j=0, \ldots, M-1$. 
Hence
\begin{equation}\label{eq:sumI}
    \begin{aligned}
    (I)= &\frac{1}{2^n}\Big|
    (n-k_p-2p){2^{k_p}} - \sum_{j=p}^{p+M-1} (n-l_j -2j) 2^{l_j}
    \Big| \\
    =& \frac{1}{2^n} \Big|
    (n-k_p-2p){2^{k_p}} -  \sum_{j=0}^{M-1} (n-k_p - 2p-j+1) 2^{ k_p-(j+1) }
    \Big|\\
    =& \frac{1}{2^n} \Big|
    (n-k_p-2p){2^{k_p}} - \underbrace{ \sum_{i=1}^{M} (n-k_p - 2p-i+2) 2^{ k_p-i }}_{(I.1)}
    \Big|.\\
\end{aligned}
\end{equation}
We now study $(I.1)$. Recall from Lemma \ref{lemma_summation} that
\[ \sum_{j=1}^{M} 2^{k_p-j} = 2^{k_p} - 2^{k_p-M} ,\quad \sum_{j=1}^{M}j 2^{k_p-j}=2^{k_p}(2-2^{-M}(M+2)).\]
So, 
\begin{align*}
    (I.1)
    &=\sum_{j=1}^{M} (n-k_p - 2p)2^{k_p-j}+2\sum_{j=1}^{M} 2^{k_p-j} - \sum_{j=1}^{M} j2^{k_p-j} \\
    &= (n-k_p -2p)(2^{k_p}-2^{k_p-M})  +2 (2^{k_p}-2^{k_p-M})-2^{k_p}(2-2^{-M}(M+2))\\
    &= (n-k_p-2p) 2^{k_p} - 2^{k_p-M} (n-k_p- 2p-M).
\end{align*}
Plugging the above into \eqref{eq:sumI} yields,
\begin{align*}
    (I)=& \frac{1}{2^n}\left|(n-k_p-2p){2^{k_p}} -(n-k_p-2p) 2^{k_p} + 2^{k_p-M} (n-k_p- 2p-M) \right|\\
    =&\frac{1}{{2^{n-k_p+M}}}\left|{n-k_p-M-2p}\right|
    \leq  \frac{|n-k_p+M|}{2^{n-k_p+M}} + \frac{2(p+M)}{2^{n-k_p+M}}.
\end{align*}

Set
\[
t:=n-k_p+M.
\]
By Lemma~\ref{obs2} \eqref{eq2}, we have $t\ge m-1$. Also, by \eqref{iter01},
\[
p\le n-k_p+1,
\]
and hence
\[
p+M\le n-k_p+M+1=t+1.
\]
Therefore
\begin{align*}
(I)
&\le \frac{t}{2^t}+\frac{2(p+M)}{2^t}\\
&\le \frac{t}{2^t}+\frac{2(t+1)}{2^t}
\le \frac{3t+2}{2^t}
\le 3\sum_{r=t}^{\infty}\frac{r+1}{2^r}
\le 3\sum_{r=m-1}^{\infty}\frac{r+1}{2^r}
\lesssim \frac{m}{2^m}.
\end{align*}
This completes the proof in the case $k_p=l_p+1$,
and so the proof of part~(iii) of Theorem~\ref{thm-bn} is complete. In particular, the proof of Theorem~\ref{thm-bn} is complete.\\

\end{document}